\def\N{{\mathbb{N}}}
\def\R{{\mathbb{R}}}
\def\dt{{\textrm{d}t}}
\def\dx{{\textrm{d}x}}
\def\dy{{\textrm{d}y}}
\providecommand{\keywords}[1]{{\textit{Key words: }} #1}
\newtheorem{theorem}{Theorem}
\title{Adaptive Local Iterative Filtering for Signal Decomposition and Instantaneous Frequency analysis}
\author[1]{Antonio Cicone\thanks{antonio.cicone@univaq.it}}
\author[2]{Jingfang Liu\thanks{jingfang.liu1@gmail.com}}
\author[2]{Haomin Zhou\thanks{hmzhou@math.gatech.edu}}
\affil[1]{Marie Curie fellow of the Istituto Nazionale di Alta Matematica, DISIM, Universit\`a degli Studi dell'Aquila, via Vetoio n.1, 67100 L'Aquila, Italy}
\affil[2]{School of Mathematics, Georgia Institute of Technology, 686 Cherry St, Atlanta, GA 30332, USA}
\begin{document}
\maketitle

\begin{abstract}{Time--frequency analysis for non--linear and non--stationary signals is extraordinarily challenging.
To capture features in these signals, it is necessary for the analysis methods  to be local, adaptive
and stable. In recent years, decomposition based analysis methods, such as the empirical mode
decomposition (EMD) technique pioneered by Huang et al.,  were developed by different research
groups. These methods decompose a signal into a finite number of components
on which the time--frequency analysis can be applied more effectively.

In this paper we consider the Iterative Filtering (IF) approach as an alternative to EMD. We provide
sufficient conditions on the filters that ensure the convergence of IF applied to any $L^2$ signal. Then
we propose a new technique, the Adaptive Local Iterative Filtering (ALIF) method, which uses
the IF strategy together with an adaptive and data driven filter length selection to achieve the decomposition.
Furthermore we design smooth filters with compact support from solutions of
Fokker--Planck equations (FP filters) that can be used within both IF and ALIF methods.
These filters fulfill the derived sufficient conditions for the convergence of the IF algorithm.
Numerical examples are given to demonstrate the performance and stability of IF
and ALIF techniques with FP filters. In addition, in order to have a complete and truly local
analysis toolbox for non--linear and non--stationary signals, we propose new definitions for
the instantaneous frequency and phase which depend exclusively on local properties of a signal.}\end{abstract}

\keywords{Iterative Filtering, Empirical Mode Decomposition, Fokker–Planck equations, instantaneous frequency}

\section{Introduction}

Data and signal analysis has become increasingly important these days. Decomposing signals and finding features of  data is quite challenging especially when the data is non--stationary and it is generated by a non--linear system.  Time--frequency analysis have been substantially studied in the past, we refer to  \cite{cohen1995time} and \cite{grochenig2000foundations} for more information on this rich subject. Traditionally, Fourier spectral analysis as well as wavelet transforms have been commonly used. Both approaches are effective and easy to implement, however there are some limitations. Fourier transform works well when the data is periodic or stationary and the associated systems are linear, it can not deal with non--stationary signals or data from non--linear systems. The wavelet transform is also a linear analysis tool. Both approaches use predetermined bases and they are not designed to be data--adaptive. Hence these techniques often can not achieve desirable results for non--linear and non--stationary signals.

In the last decade, several decomposition techniques have been proposed to analyze non--linear and non--stationary signals. All these methods share the same approach: first they decompose a signal into simpler components and then they apply a time--frequency analysis to each component separately. The signal decomposition can be achieved in two ways: by iteration or by optimization.

The first iterative algorithm of this kind, the empirical mode decomposition (EMD), was introduced by Huang et al. \cite{huang1998empirical} in 1998.  This method aims to iteratively decompose a signal into a finite sequence of intrinsic mode functions (IMFs) whose instantaneous frequencies are well behaved. We will come back to instantaneous frequency later in this paper, let us instead describe the iterative structure of EMD which is called the Sifting Process.

Let $\mathcal{L}$ be an operator getting the moving average of a signal $f(x)$ and $\mathcal{S}$ be an operator capturing the fluctuation part $\mathcal{S}(f)(x)=f(x)-\mathcal{L}(f)(x)$. Then the first IMF produced by the sifting process is
\begin{equation}\label{equ4}
I_1(x) = \lim_{n\to\infty} \mathcal{S}_{1,n}\left(f_{n}\right)(x)
\end{equation}
where $f_{n}(x)=\mathcal{S}_{1,n-1}(f_{n-1})(x)$ and $f_{1}(x)=f(x)$.
Here the limit is reached so that applying $\mathcal{S}$ one more time does not change the signal.

The subsequent IMFs are obtained one after another by
\begin{equation}\label{equ5}
I_k(x) = \lim_{n\to\infty} \mathcal{S}_{k,n}(r_{n})(x)
\end{equation}
where $r_{n}(x)=\mathcal{S}_{k,n-1}(r_{n-1})(x)$ and $r_{1}(x)=r(x)$ which is the remainder $f(x)-I_1(x)-\ldots-I_{k-1}(x)$. The sifting process stops when $r(x)=f(x)-I_1(x)-I_2(x)-\ldots-I_m(x)$ becomes a trend signal, which means it has at most one local maximum or minimum.  So the decomposition of $f(x)$ is
\begin{equation}\label{equ6}
f(x)=   \sum_{j=1}^m I_j(x) +  r(x)
\end{equation}

In this iterative process the moving average $\mathcal{L}(f)(x)$ is given by the mean function of the upper envelope and the lower envelope, which are given by cubic splines connecting local maxima and local minima of $f(x)$ respectively.
However, this method is not stable under perturbations
since cubic splines are used repeatedly in the iteration.
To overcome this issue, Huang et al. developed  the Ensemble Empirical Mode Decomposition (EEMD) \cite{wu2009ensemble} where the IMFs are taken as the mean of many different trials. In each trial
a random perturbation is artificially added to the original signal. More details on EMD method and its analysis can be found, for instance, in \cite{huang2005hilbert} and \cite{huang2003confidence, rilling2003empirical, el2010analysis, sharpley2006analysis, rilling2008one, rilling2009sampling, feldman2006time}.

Another iterative decomposition technique is the Iterative Filtering (IF) method which is inspired by EMD \cite{lin2009iterative}. It uses the same algorithm framework as the original EMD, but the moving average of a signal $f(x), x\in \mathbb{R}$, is derived by the convolution of $f(x)$ with low pass filters, for example the double average filter $a(t)$ given by
 \begin{equation}\label{equ9}
 a(t)=\frac{l+1-|t|}{(l+1)^2}, \quad t\in[-l,\, l]
 \end{equation}
IF is stable under perturbation and the convergence is guaranteed for periodic and $l^\infty$ signals using uniform filters \cite{lin2009iterative,wang2013convergence}. However, the convergence for general signals with uniform and non--uniform filters hasn't been explored yet.  Recently, Wang et al. in \cite{wang2012iterative} and \cite{wang2012mode} developed the mode decomposition evolution equations which can achieve similar decompositions to the IF algorithm through some high order partial differential equations.

A different way to decompose a signal is by optimization. Using the multicomponent amplitude modulation and frequency modulation (AM--FM) representation, which has been studied for example in \cite{loughlin1997comments} and \cite{wei1998instantaneous}, Hou et al. developed an adaptive data analysis via sparse time--frequency representation in \cite{hou2009variant} and \cite{thomas2011adaptive};   Daubechies et al.  proposed synchrosqueezed wavelet transforms in \cite{daubechies2011synchrosqueezed} and Gilles the Empirical wavelet transform \cite{gilles2013empirical} as an EMD--like tool. These methods assume that each IMF can be written as an AM--FM
or wavelet function, a signal $f(x)$ is decomposed into a group of IMFs by seeking  the minimizer of some functional of $f(x)$. We refer  interested readers to \cite{hou2009variant, thomas2011adaptive, daubechies2011synchrosqueezed, meignen2007new, pustelnik2012multicomponent, selesnick2011resonance, dragomiretskiy2014variational} and \cite{wu2011one}
for more details on this kind of techniques.

In this paper we review the IF technique and present a new one,
called Adaptive Local Iterative Filtering (ALIF) algorithm,
which generalizes the IF method to non--uniform filters applied to general oscillatory signals.

There are two main aspects in ALIF that are different from the existing IF algorithm. One is that
we use a Fokker--Planck equation, a second order partial differential equation (PDE), to
construct smooth low pass filters which have compact support that we call FP filters. The other is that we
adapt the filter length point by point according to the signal itself.

To effectively handle non--linear and non--stationary signals, it is highly desirable to
use filters with compact  support. In fact filters with long support
may mix features that are far apart in a signal which could be troublesome, especially
for signals with transient information. However, the compact support low pass filters,
such as the double average filters, used in the existing IF algorithms are not
smooth enough. They may create artificial oscillations in subsequent IMFs, due to their
non--smoothness. This motivates us to design filters from the solution of
Fokker--Planck equations. These newly designed FP filters are compactly supported, infinitely differentiable and vanishing to zero smoothly at both ends. Such features ensure that no artificial oscillations are introduced during the iterative filtering process.

More importantly, to capture the non--stationary changes in the frequency and
amplitude of a signal, the length of the filters must be adapted accordingly.
However not all compact support low pass filters and adaptive strategies can lead to convergent decompositions.
The adaptive strategy must be carefully designed.
In this paper we propose a strategy that is completely data driven and we show a few numerical results produced using this method.

Moreover, we propose alternative definitions for the instantaneous frequency and phase. In the existing
instantaneous frequency and phase analysis algorithms, Hilbert transform is used to build analytical signals. However Hilbert transform is a global operator, which is not ideal to handle signals
with transient features. To localize the analysis, we define the instantaneous frequency and phase
of an IMF, obtained by the IF and ALIF algorithms, as the rotation speed and the rotation angle calculated by the normalized IMF and its derivative.
We show that such definition for the instantaneous frequency can better
capture the frequency changes in non--linear signals.

The rest of the paper is organized as follows: In Section \ref{sec:IF} we review the IF method and we study its convergence when used to decompose a general non--periodic and non--stationary signal. In Section \ref{sec:ALIF}, we present the ALIF algorithm and we discuss about its convergence.
Section \ref{sec:filters} is devoted to develop the so called FP filters that can be used in both IF and ALIF techniques.
In Section \ref{sec:InstFreq}  we give new definitions of instantaneous frequency and phase as well as a method to compute them. Finally in Section \ref{sec:Experiments}  we show numerical results obtained applying IF and ALIF methods on different kinds of signals.

\section{Iterative Filtering Algorithm}\label{sec:IF}

The Iterative Filtering (IF) \cite{lin2009iterative} is, as the name suggests, an iterative technique which allows to decompose a given signal into a finite number of simple components called Intrinsic Mode Functions (IMFs).

As defined in \cite{huang1998empirical}, an IMF is a function fulfilling two properties:
the number of extrema and the number of zero crossings must either equal or differ at most by one;
considering an upper envelope connecting all the local maxima and a lower envelope connecting all the local minima of the function, their mean has to be zero at any point.

Given a signal $f(x), x\in\R$, let $\mathcal{L}$ be an operator such that $\mathcal{L}(f)$ is a moving average of the signal $f(x)$. Considering a low pass filter like, for instance, the double average filter
$a(t)$,
the moving average of the signal $f(x)$ is given by the convolution
$$\mathcal{L}(f)(x)=\int_{-l}^l f(x+t)a(t) \dt.$$
In the IF method the operator $\mathcal{L}(f)$ is always given by the convolution of the signal $f$ and some filter function $w$.

If we define $f_1=f$ and the operator $\mathcal{S}_{1,n}(f_n)= f_n-\mathcal{L}_{1,n}(f_n)=f_{n+1}$
which captures the fluctuation part of $f_n$, then the first IMF is given by $I_1=\lim_{n\rightarrow\infty} \mathcal{S}_{1,n}(f_n)$, where $\mathcal{L}_{1,n}$ depends on the mask length $l_n$, which is the length of the filter at step $n$. Similarly, applying the operators $\mathcal{S}$ to the remainder signal $f-I_1$, we obtain $I_2$, the second IMF. Iterating the process we get the $k$-th IMF as $I_k=\lim_{n\rightarrow\infty} \mathcal{S}_{k,n}\left(r_n\right)=r_{n+1}$, where $r_1=f-I_1-\ldots -I_{k-1}$.
The IF method stops when $r=f-I_1-\ldots -I_{m},\, m\in \N$, becomes a trend signal, which means the remainder $r$ has at most one local maximum or minimum, and hence the signal is decomposed into \eqref{equ6}. 

The IF algorithm contains two nested loops: an Inner Loop, to compute each single IMF, and an Outer Loop, to derive all the IMFs.

\begin{algorithm}
\caption*{\textbf{IF Algorithm} IMF = IF$(f)$}
\begin{algorithmic}
\STATE IMF = $\left\{\right\}$
\WHILE{the number of extrema of $f$ $\geq 2$}
      \STATE $f_1 = f$
      \WHILE{the stopping criterion is not satisfied}
                  \STATE  compute the filter length $l_n$ for $f_{n}$
                  \STATE  $f_{n+1}(x) = f_{n}(x) -\int_{-l_n}^{l_n} f_n(x+t)w_n(t)\dt$
                  \STATE  $n = n+1$
      \ENDWHILE
      \STATE IMF = IMF$\,\cup\,  \{ f_{n}\}$
      \STATE $f=f-f_{n}$
\ENDWHILE
\STATE IMF = IMF$\,\cup\,  \{ f\}$
\end{algorithmic}
\end{algorithm}

For the computation of the mask length $l_n$, following \cite{lin2009iterative}, we can compute it as
\begin{equation}\label{eq:Unif_Mask_length}
l_n:=2\left\lfloor\chi \frac{N}{k}\right\rfloor
\end{equation}
where $N$ is the total number of sample points of a signal $f_n(x)$, $k$ is the number of its extreme points, $\chi$ is a parameter usually fixed around 1.6, and $\left\lfloor \cdot \right\rfloor$ rounds a positive number to the nearest integer closer to zero.

We observe that even though the IF algorithm allows for the updating of the mask length $l_n$ at each step of the inner loop, in the implemented algorithm for every inner loop we do compute the mask length only in the first step and then use the same value throughout all the other steps. The reason for doing so is to ensure that each IMF produced by the method does contain a well defined set of instantaneous frequencies.

For this reason the operators $\mathcal{S}$ and $\mathcal{L}$ are going to be independent on the step number $n$. Hence, given the signal $f$, the first IMF is simply given by $I_1=\lim_{n\rightarrow\infty} \mathcal{S}^n(f)$, where $\mathcal{S}(f)= f-\mathcal{L}(f)$ and $\mathcal{L}(f)(x)=\int_{-l}^l f(x+t)w(t) \dt$, with $l$ the mask length computed in the first step of the inner loop and $w(t)$ some suitable filter function.
In the implemented algorithm, instead of letting $n$ to go to infinity, we stop the inner loop based on some stopping criterion. We discuss about this in more details in Section \ref{sec:ALIF}.

The convergence for the inner loop is guaranteed for periodic signals \cite{lin2009iterative} and it has been studied for $l^\infty$ functions in \cite{wang2013convergence}.
However, the convergence of IF applied to general signals using uniform or non--uniform filter hasn't been explored yet. With this in mind, in the following we provide an explicit formula for each IMF and we show sufficient conditions on the filter $w(t)$ that ensure the convergence of the IF inner loop. In Section \ref{sec:filters} we present a class of filters that fulfill these sufficient conditions.

\subsection{The Convergence of the IF inner loop}

Let $f(x),x\in \mathbb{R}$ be a continuous signal. When the uniform filter $w(t),$ $t\in[-l,l]$, is used, the moving average of $f(x)$ computed in the inner loop of the IF algorithm can be written as the operator
\begin{equation}\label{movingave}
\mathcal{L}(f)(x) := \int_{-l}^l f(x+t)w(t)\dt
\end{equation}
If we define the operator $\mathcal{S}$ as
\begin{equation*}
  \mathcal{S}(f):= f - \mathcal{L}(f) = (I-\mathcal{L})(f)
\end{equation*}
then one step of the inner loop of IF corresponds to applying $\mathcal{S}$ to the current signal.
Furthermore if we fix the mask length $l$ throughout all the steps of an inner loop, the intermediate function sequence  generated is $\{\mathcal{S}^n(f)\}$.
Hence the convergence of the inner loop is equivalent to the convergence of this sequence. Assuming $\{\mathcal{S}^n(f)\}$ is convergent,
the first IMF of $f(x)$ is given by $I_1 = \lim\limits_{n\rightarrow\infty}\mathcal{S}^n(f)$.

It is proved in \cite{lin2009iterative} that $\{\mathcal{S}^n(f)\}$ is convergent when $f$ is a periodic signal.
In this section, we discuss the convergence of the sequence $\{ \mathcal{S}^n(f)\}$ for $L^2$ signals. Before doing that, we need some preliminary analysis, which takes the symmetry of the filter and the Fourier transform of ${\mathcal{S}^n(f)}$ into account.

Let the filter $w(t)$ be continuous, compactly supported and symmetric, i.e. $w(t)=w(-t), t\in[-l,l]$. So $w(t) \in L^2(\mathbb{R})$. Then the moving average of $f(x)$ computed by
(\ref{movingave}) is the convolution of $f$ and $w$:
\begin{equation}
\begin{split}
\mathcal{L}(f)(x)&= \int_{-l}^l f(x+t)w(t)\dt = \int_{-l}^l f(x-t)w(t)\dt \\
&=\int_{-\infty}^{\infty} f(x-t)w(t)\dt = (f * w)(x).
\end{split}
\end{equation}

The Fourier transform of $w$ is
$\mathcal{F}(w)(\xi)=\int_{-\infty}^{\infty}w(t)e^{-2\pi i t \xi}\dt, \xi \in \mathbb{R}$.
If the signal $f$ is in $L^2(R)$, then the Fourier transform of $f$ is
$\mathcal{F}(f)(\xi)=\int_{-\infty}^{\infty}f(x)e^{-2\pi i x \xi}\dx,  \xi \in \mathbb{R}$.
By the convolution theorem of the Fourier transform, we have $\mathcal{F}(\mathcal{L}(f))(\xi) = \mathcal{F}(f)(\xi)\mathcal{F}(w)(\xi)$,~$\xi\in \mathbb{R}$.
Thus by linearity of the Fourier transform
\begin{equation*}
\begin{split}
\mathcal{F}(\mathcal{S}^n(f))(\xi)&=\mathcal{F}((I-\mathcal{L})^n f)(\xi) = [1-\mathcal{F}(w)(\xi)]^n \mathcal{F}(f)(\xi),  \quad \xi\in \mathbb{R}
\end{split}
\end{equation*}
Based on this preliminary analysis, we present the convergence theorem of the sequence $\{\mathcal{S}^n(f)\}$.
\begin{theorem}\label{theo_1}
Let $w(t), t\in[-l,l]$ be $L^2$, symmetric, nonnegative, $\int_{-l}^l w(t)\dt = 1$ and let $f(x)\in L^2(\mathbb{R})$. \newline
If $|1- \mathcal{F}(w)(\xi)| < 1 $ or $\mathcal{F}(w)(\xi)=0$,
Then
 $\{\mathcal{S}^n(f)\}$ converges and
 $$\lim\limits_{n\rightarrow \infty}{\mathcal{S}^n(f)(x)}= \int_{-\infty}^{\infty} \mathcal{F}(f)(\xi) \chi_{\{\mathcal{F}(w)(\xi)=0 \}}
 e^{2\pi i \xi x} \textrm{d}\xi$$
\end{theorem}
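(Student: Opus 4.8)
The plan is to work entirely on the Fourier side, where the iteration is diagonalized by the identity already derived in the preliminary analysis, and then transfer the conclusion back to the signal side via Plancherel. First I would record the elementary properties of $\mathcal{F}(w)$ that the hypotheses on $w$ buy us: since $w$ is compactly supported and in $L^2$ it lies in $L^1(\R)$, so $\mathcal{F}(w)$ is continuous; since $w$ is real and even, $\mathcal{F}(w)(\xi)=\int w(t)\cos(2\pi t\xi)\dt$ is real-valued; and since $w\ge 0$ with $\int w=1$, we get $|\mathcal{F}(w)(\xi)|\le 1$ for every $\xi$. In particular $1-\mathcal{F}(w)(\xi)\in[0,2]$, and under the stated hypothesis ($|1-\mathcal{F}(w)(\xi)|<1$ or $\mathcal{F}(w)(\xi)=0$) we have the uniform bound $|1-\mathcal{F}(w)(\xi)|\le 1$ at every $\xi$. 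I would also note that $\mathcal{S}^n(f)\in L^2(\R)$ for all $n$, since $\mathcal{L}(f)=f*w$ and $\|f*w\|_2\le\|w\|_1\,\|f\|_2$ by Young's inequality, so $\mathcal{S}=I-\mathcal{L}$ maps $L^2(\R)$ into itself.

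Next, using $\mathcal{F}(\mathcal{S}^n(f))(\xi)=[1-\mathcal{F}(w)(\xi)]^n\,\mathcal{F}(f)(\xi)$, I would compute the pointwise limit of the right-hand side in $\xi$. Fix $\xi$: if $\mathcal{F}(w)(\xi)=0$ the scalar factor is identically $1$, so the value stays $\mathcal{F}(f)(\xi)$; if instead $|1-\mathcal{F}(w)(\xi)|<1$, then $[1-\mathcal{F}(w)(\xi)]^n\to 0$, so the value tends to $0$. Hence $\mathcal{F}(\mathcal{S}^n(f))(\xi)\to \mathcal{F}(f)(\xi)\,\chi_{\{\mathcal{F}(w)(\xi)=0\}}$ pointwise (this characteristic function is measurable because $\mathcal{F}(w)$ is continuous).

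Then I would upgrade this to convergence in $L^2(\R)$ by dominated convergence. From $|1-\mathcal{F}(w)(\xi)|\le 1$ we get $|\mathcal{F}(\mathcal{S}^n(f))(\xi)|\le|\mathcal{F}(f)(\xi)|$, so the integrands $\big|\mathcal{F}(\mathcal{S}^n(f))(\xi)-\mathcal{F}(f)(\xi)\chi_{\{\mathcal{F}(w)(\xi)=0\}}\big|^2$ are dominated by $4|\mathcal{F}(f)(\xi)|^2\in L^1(\R)$ (using $f\in L^2$ and Plancherel) and go to $0$ pointwise. Therefore $\mathcal{F}(\mathcal{S}^n(f))\to\mathcal{F}(f)\,\chi_{\{\mathcal{F}(w)=0\}}$ in $L^2(\R)$. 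Since the Fourier transform is an isometry of $L^2(\R)$ in this normalization, $\{\mathcal{S}^n(f)\}$ is Cauchy, hence convergent, in $L^2(\R)$, and applying the inverse Fourier transform yields $\lim_{n\to\infty}\mathcal{S}^n(f)(x)=\int_{-\infty}^{\infty}\mathcal{F}(f)(\xi)\chi_{\{\mathcal{F}(w)(\xi)=0\}}e^{2\pi i\xi x}\,\textrm{d}\xi$.

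The argument is soft and I do not expect a genuine obstacle; the only points requiring care are (i) establishing that $\mathcal{F}(w)$ is real and bounded by $1$ — this is exactly where the symmetry, nonnegativity, and unit-mass assumptions on $w$ are used, and it is what makes both the pointwise dichotomy and the dominating function work; and (ii) being precise that the asserted convergence is in the $L^2$ sense (from which, if desired, one extracts pointwise a.e.\ convergence along a subsequence). I would also add the remark that, since $|\mathcal{F}(w)(\xi)|\le 1$ holds automatically, the hypothesis is equivalent to $\mathcal{F}(w)(\xi)\ge 0$ for all $\xi$ — it excludes precisely the values $\mathcal{F}(w)(\xi)\in[-1,0)$ — which is the natural nonnegativity-of-the-symbol condition to check for the FP filters constructed in Section \ref{sec:filters}.
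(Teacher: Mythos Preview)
Your argument is correct and follows the same route as the paper: diagonalize the iteration on the Fourier side via $\mathcal{F}(\mathcal{S}^n f)=[1-\mathcal{F}(w)]^n\mathcal{F}(f)$, do the two-case pointwise analysis, and invert. The only difference is that you make explicit the dominated-convergence step (with dominating function $|\mathcal{F}(f)|^2$) to pass from pointwise to $L^2$ convergence, whereas the paper simply invokes invertibility of the Fourier transform; your version is the more careful of the two, but the strategy is identical.
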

\begin{proof}
$f(x)\in L^2(\mathbb{R})$, thus by the Parseval's Relation
$\int_{-\infty}^{\infty} |\mathcal{F}(f)(\xi)|^2 \textrm{d}\xi = \int_{-\infty}^{\infty}|f(x)|^2\dx <\infty$. There are two possibilities, either
$|1-\mathcal{F}(w)(\xi)|<1$  or  $\mathcal{F}(w)(\xi)=0$.

Case ``$|1-\mathcal{F}(w)(\xi)|<1$''
\begin{equation*}
|\mathcal{F}(\mathcal{S}^n(f))(\xi)| = |[1-\mathcal{F}(w)(\xi)]^n \mathcal{F}(f)(\xi)| = |1-\mathcal{F}(w)(\xi)|^n |\mathcal{F}(f)(\xi)| <|\mathcal{F}(f)(\xi)|
\end{equation*}
in particular $\lim\limits_{n\rightarrow\infty}|\mathcal{F}(\mathcal{S}^n(f))| = 0$, which implies that also $\mathcal{F}(\mathcal{S}^n(f))$ converges to 0 as $n\rightarrow\infty$.

Case ``$\mathcal{F}(w)(\xi)=0$''
\begin{equation*}
\mathcal{F}(\mathcal{S}^n(f))(\xi) = [1-\mathcal{F}(w)(\xi)]^n \mathcal{F}(f)(\xi) = \mathcal{F}(f)(\xi)
\end{equation*}

In summary

\begin{equation*}
\lim\limits_{n\rightarrow\infty}\mathcal{F}(\mathcal{S}^n(f))(\xi) =\left\{
  \begin{array}{l l}
   0 & \quad \text{if} \quad |1-\mathcal{F}(w)(\xi)|<1  \\
    \mathcal{F}(f)(\xi) & \quad \text{if} \quad  \mathcal{F}(w)(\xi)=0
  \end{array} \right.
\end{equation*}

Since the Fourier transform is an invertible operator, $\{\mathcal{S}^n(f)\}$ is also convergent and its limit is
\begin{equation}\label{eq:LimitUnFilt}
\lim\limits_{n\rightarrow \infty}{\mathcal{S}^n(f)(x)}= \int_{-\infty}^{\infty} \mathcal{F}(f)(\xi) \chi_{\{\xi:\,\mathcal{F}(w)(\xi)\, =\, 0 \}}
 e^{2\pi i \xi x} \textrm{d}\xi
\end{equation}
\end{proof}

This theorem provides sufficient conditions on the filter that guarantee the convergence of the inner loop. Furthermore (\ref{eq:LimitUnFilt}) is an explicit formula for the IMF of a signal $f$ obtained using IF equipped with the filter~$w$.

The previous sufficient conditions on the filter are not unrealistic. For example, the double average filter $a(t)$ given in (\ref{equ9}) satisfies these requirements and $\mathcal{F}(a)(\xi)=0 $ when $\xi=\frac{k}{l+1}, 1\leq k \leq l+1$.
But it is not the only choice. In fact, filters which have the property $|1- \mathcal{F}(w)(\xi)| < 1 $ or $\mathcal{F}(w)(\xi)=0$
are easy to obtain.

It is well known that for symmetric and nonnegative filters $w$, $\mathcal{F}(w)(\xi)$ is real and
$$\mathcal{F}(w)(\xi) = \int_{-\infty}^{\infty} w(t)\cos(-2\pi i t\xi) \dt. $$
In addition, since $\int_{-l}^l w(t)\dt = 1$, we have

\begin{equation*}
\begin{split}
\left| \mathcal{F}(w)(\xi)\right|= \left|\int_{-\infty}^{\infty} w(t)\cos(-2\pi i t\xi)\dt\right|
& \leq \int_{-\infty}^{\infty}\left| w(t)\cos(-2\pi i t\xi)\right|\dt < \int_{-\infty}^{\infty} \left| w(t) \right|\dt =  \int_{-l}^l w(t)\dt = 1
\end{split}
\end{equation*}

Hence for symmetric and nonnegative filters $w(t),\; t\in [-l,l]$, $-1 < \mathcal{F}(w)(\xi)<1$, for every $\xi\in \mathbb{R}$.
To have that $0\leq \mathcal{F}(w)(\xi)<1$, we can simply consider the filter $u(t)$, $t\in[-2l, 2l]$, given by the convolution of the filter $w(t)$, $t\in[-l, l]$, with itself, i.e.
$$u(t)=w(t)\star w(t).$$
Therefore the Fourier transform of $u(t)$ is simply given by $\mathcal{F}(u)(\xi)=\mathcal{F}(w)(\xi)\cdot\mathcal{F}(w)(\xi)$, which satisfies $0\leq \mathcal{F}(u)(\xi)<1$, for every $ \xi\in \mathbb{R}$.
So every filter given by the convolution of a symmetric, nonnegative and finitely supported $L^2$ filter with itself satisfies the sufficient conditions of Theorem \ref{theo_1}. In Section \ref{sec:filters} we present a class of this kind of filters.

\section{Adaptive Local Iterative Filtering Techniques}\label{sec:ALIF}

In this Section we present the Adaptive Local Iterative Filtering  (ALIF) algorithm and discuss about its convergence. This method is based on IF, the main differences are in the way we compute locally and adaptively the filter length and that to generate the moving average of signals we make use of the so called FP filters produced as solutions of Fokker--Planck equations.

\begin{algorithm}
\caption*{\textbf{ALIF Algorithm} IMF = ALIF$(f)$}
\begin{algorithmic}
\STATE IMF = $\left\{\right\}$
\WHILE{the number of extrema of $f$ $\geq 2$}
      \STATE $f_1 = f$
      \WHILE{the stopping criterion is not satisfied}
                  \STATE  compute the filter length $l_n(x)$ for $f_{n}(x)$
                  \STATE  $f_{n+1}(x) = f_{n}(x) -\int_{-l_n(x)}^{l_n(x)} f_n(x+t)w_n(x,t)\dt$
                  \STATE  $n = n+1$
      \ENDWHILE
      \STATE IMF = IMF$\,\cup\,  \{ f_{n}\}$
      \STATE $f=f-f_{n}$
\ENDWHILE
\STATE IMF = IMF$\,\cup\,  \{ f\}$
\end{algorithmic}
\end{algorithm}

As for IF, ALIF algorithm contains two loops: the inner and the outer loop. The former captures a single IMF, while the latter produces all the IMFs embedded in a signal.

Given a signal $f(x), x\in\R$, we define the operator that represents the moving average of the signal $f(x)$ as $\mathcal{L}_n(f)(x)=\int_{-l_n(x)}^{l_n(x)} f(x+t)w_n(x,t)\dt$, where $w_n(x,t)$, $t\in [-l_n(x), l_n(x)]$, is a filter with mask length $2l_n(x)$.

Assuming $f_1=f$, if we define the operator which captures the fluctuation part of $f_n$ as $\mathcal{S}_{1,n}(f_n)= f_n-\mathcal{L}_{1,n}(f_n)=f_{n+1}$, then the first IMF is given by $I_1=\lim_{n\rightarrow\infty} \mathcal{S}_{1,n}(f_n)$, where
$\mathcal{L}_{1,n}$ depends on the mask length $l_n(x)$ at step $n$.

In practice we do not let $n$ to go to infinity, instead we use a stopping criterion. We first define $I_{1,n}=\mathcal{S}_{1,n}(f)$ where $\mathcal{S}_{1,n}$ denotes the operator used in the $n$--th step of the $1$--st inner loop. We define then
\begin{equation}\label{eq:SD}
SD:=\frac{\|I_{1,n}-I_{1,n-1}\|_2}{\|I_{1,n-1}\|_2}
\end{equation}
We can either stop the process when the value $SD$ reaches a certain threshold as suggested in \cite{huang1998empirical} and \cite{lin2009iterative} or we can simply introduce a limit on the maximal number of iterations for all the inner loops.
It is also possible to adopt different stopping criteria for different inner loops.
Considering for instance a noisy signal, we may use a loose stopping criterion for the first few IMFs, this will reduce the number of noise components. For the remaining IMFs we could use instead a more restrictive stopping criterion to detect finer differences in the patterns of the components.
These stopping criteria can be used in both the ALIF and IF methods.

To produce the $k$-th IMF, as we do in the IF method, we apply the previous process
to the remainder signal $r=f-I_1-\ldots -I_{k-1},\, k\in \N$, and the algorithm stops when $r$ becomes a trend signal, meaning it has at most one local extremum.

The difference here is that the operator $\mathcal{L}$ is no more a plain convolution of $f$ and $w$ since the mask length $l_n(x)$ does depend on $x$.

The computation of $l_n(x)$ is a crucial step in the ALIF technique. It has to be a strictly positive function and it can be either a constant for every $x$, in which case $l_n(x)$ is a \emph{uniform mask length} and the ALIF algorithm simply reduces to the IF method, or $l_n(x)$ can change point by point producing a \emph{non--uniform mask length}.
In both cases the choice of $l_n(x)$ is not unique. We discussed the uniform case in the previous section where we provided the formula (\ref{eq:Unif_Mask_length}) for the computation of the mask length. In this section we focus on the non--uniform case.

Given a signal $f(x)$, like the one depicted in Figure \ref{fig:doubleHat}, we make the assumption that the distance of its subsequent local extrema gives a measure of the local average instantaneous period of the highest frequency IMF contained in $f(x)$. Based on this assumption the main idea is to compute the non--uniform mask length $l_n(x)$ as a multiple of the distance of subsequent local minima and maxima of $f(x)$. In order to have a continuously varying and smooth $l_n(x)$ we can interpolate the values of the distance of the subsequent extrema of $f(x)$, ref. red dashed curve plotted in Figure \ref{fig32b}. However the derived mask length contains, usually, high frequency oscillations and the ALIF method, equipped with such non--uniform mask length, produces artifacts and in general diverges. To avoid these problems we remove the high frequency oscillations contained in  $l_n(x)$ to produce a slowly varying mask length. This can be achieved, for instance, applying the IF method to the computed mask length to identify the first few IMFs corresponding to high frequency oscillations. The new slowly varying mask length is produced subtracting the high frequency IMFs from $l_n(x)$, ref. solid blue curve in Figure \ref{fig32b}.

\begin{figure}[H]
        \begin{subfigure}[b]{0.5\textwidth}
                \centering
                \includegraphics[width=\textwidth]{./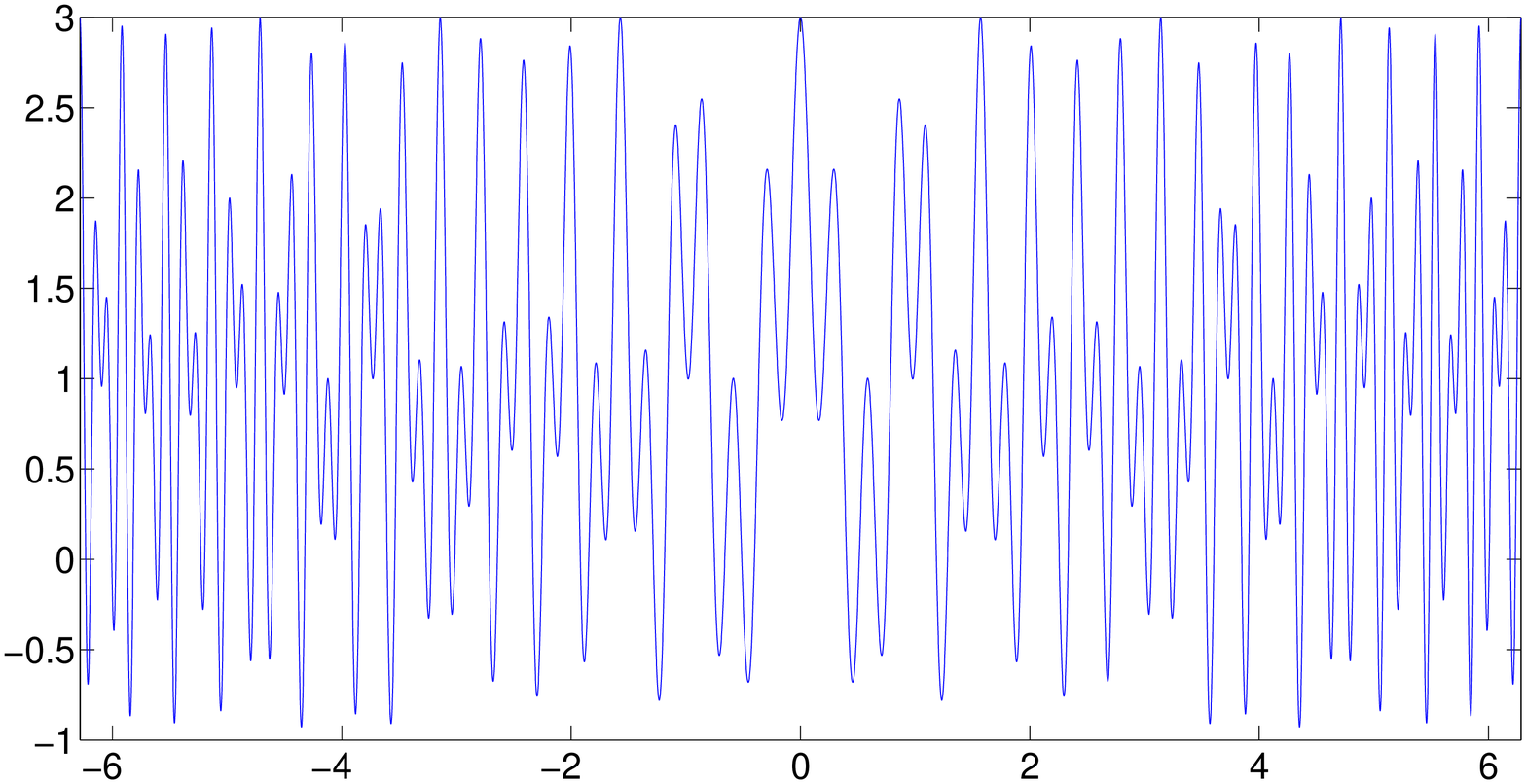}
                \caption{}
                \label{fig:doubleHat}
        \end{subfigure}%
        ~ 
        \begin{subfigure}[b]{0.5\textwidth}
                \centering
                \includegraphics[width=\textwidth]{./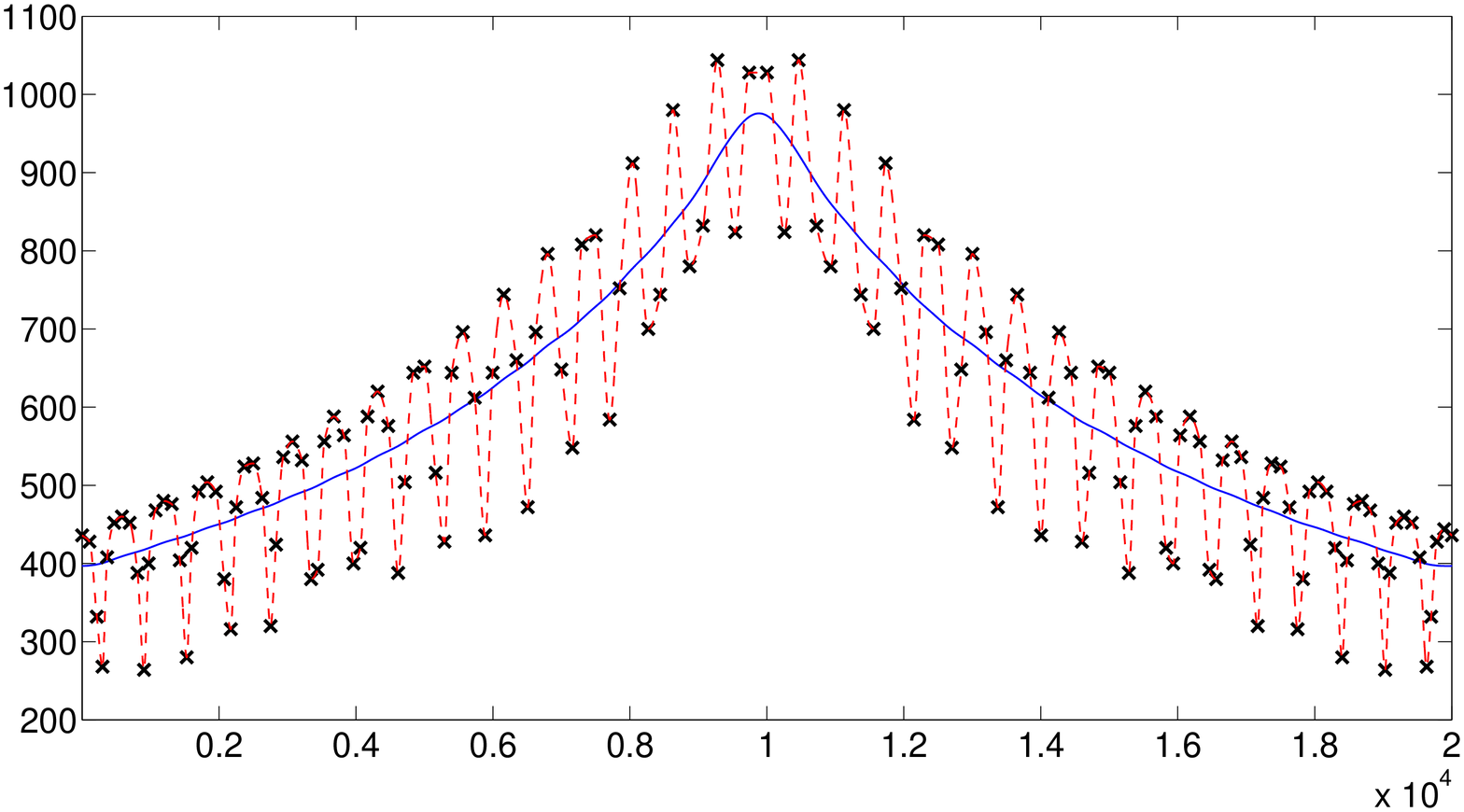}
                \caption{}
                \label{fig32b}
        \end{subfigure}
        \caption{ (\subref{fig:doubleHat}) Signal $f(x)$. (\subref{fig32b}) The black crosses mark the distance between consecutive extrema. The dashed red curve is the interpolant of the black crosses. The solid blue line is the mask length $l_n(x)$ produced after removing high frequencies oscillations from the red dashed curve. }\label{fig:MaskLength}
\end{figure}

The ALIF method, equipped with the newly generated mask length, converges and it does decompose the given signal into the expected IMFs, as shown in Section \ref{sec:Experiments}, Example 3, page \pageref{ex3}.

However a fundamental question is still open: under which conditions is the ALIF method convergent?

What we observed so far is that, to ensure convergence of ALIF method equipped with a non--uniform mask length, we must have a slowly varying mask length. In the limit case, when the mask length is not changing at all, the ALIF method reduces to the IF technique and, thanks to Theorem \ref{theo_1}, we have sufficient conditions on the filter which ensure the convergence for any possible uniform mask length.

To establish a rigorous convergence theorem for the inner loop of the ALIF algorithm when equipped with a non--uniform mask length
we define the operator
$$\mathcal{L}_{w,l}(f) := \int_{-l(x)}^{l(x)} f(x+t)w(x,t)\dt$$
and we consider the
equivalent formulation for the operator $\mathcal{S}_n(f_n)$ given by
\begin{equation}\label{adaptive_2}
f_{n+1}(x)= \mathcal{S}_n(f_n)(x) = f_{n}(x) - \int_{-L}^L f_n(x + g_n(x,y)) W(y)\dy
\end{equation}
where  $W(y), y\in[-L,L]$ is a filter with fixed length $2L$ and $g_n(x,y):\R \times [-L,L]\to \R \times [-l_n(x),l_n(x)]$ is a scaling function which can be set for example to be the linear function $g_n(x,y)=l_n(x)y/L$ or the cubic one $g_n(x,y)=l_n(x)y^3/L^3$. By means of $g_n$ we ensure that $f_n(x+g_n(x,y))$ catches up with the fixed filter $W$.

\begin{theorem}\label{theo_2}
Let $f(x),x\in \mathbb{R}$, be continuous and in $L^{\infty}(\mathbb{R})$.  Let
\begin{equation}\label{requi}
\varepsilon_n = \frac{\|\mathcal{L}_{w_{n+1},l_{n+1}}(f_{n+1})\|_{L^{\infty}} }{\|\mathcal{L}_{w_{n},l_{n}}(f_{n})\|_{L^{\infty}}}, \quad\quad
\delta_n = \frac{\|\mathcal{L}_{w_{n+1},l_{n+1}}(|f_{n+1}|)\|_{L^{\infty}} }{\|\mathcal{L}_{w_{n},l_{n}}(|f_{n}|)\|_{L^{\infty}}}
\end{equation}
If \quad
\begin{equation}\label{con}
\prod_{i=1}^n \varepsilon_i \rightarrow 0  ,\quad \prod_{i=1}^n \delta_i \rightarrow c>0 , \quad \text{as} \quad n\rightarrow \infty
\end{equation}
Then $\{f_n(x)\}$ converges a. e. to an IMF.
\end{theorem}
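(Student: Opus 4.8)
The plan is to turn the inner loop into a telescoping series and then read convergence off the two hypotheses, one controlling the \emph{size} of the successive corrections and the other controlling the \emph{stability} of the iteration. First I would rewrite one step of the inner loop as $f_n(x)-f_{n+1}(x)=\mathcal{L}_{w_n,l_n}(f_n)(x)$, so that
\begin{equation*}
f_{N+1}(x)=f_1(x)-\sum_{n=1}^{N}\mathcal{L}_{w_n,l_n}(f_n)(x);
\end{equation*}
hence $\{f_n\}$ converges (a.e., and in $L^\infty$ if the corrections turn out to be summable) if and only if the series $\sum_n \mathcal{L}_{w_n,l_n}(f_n)$ does, and the whole proof reduces to this.

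Next I would control the corrections through $\varepsilon_n$. By the very definition \eqref{requi}, $\|\mathcal{L}_{w_{n+1},l_{n+1}}(f_{n+1})\|_{L^\infty}=\varepsilon_n\,\|\mathcal{L}_{w_n,l_n}(f_n)\|_{L^\infty}$, so by induction $\|\mathcal{L}_{w_n,l_n}(f_n)\|_{L^\infty}=\bigl(\prod_{i=1}^{n-1}\varepsilon_i\bigr)\|\mathcal{L}_{w_1,l_1}(f_1)\|_{L^\infty}$, and $\prod_{i\le n}\varepsilon_i\to0$ forces the corrections $f_n-f_{n+1}$ to $0$ uniformly. In parallel, since each $w_n(x,\cdot)$ is nonnegative of total mass one, $|\mathcal{L}_{w_n,l_n}(f_n)(x)|\le\mathcal{L}_{w_n,l_n}(|f_n|)(x)$ pointwise, and the same telescoping applied to $\delta_n$ gives $\|\mathcal{L}_{w_n,l_n}(|f_n|)\|_{L^\infty}=\bigl(\prod_{i<n}\delta_i\bigr)\|\mathcal{L}_{w_1,l_1}(|f_1|)\|_{L^\infty}$; the hypothesis $\prod_{i\le n}\delta_i\to c>0$ keeps these ``masses'' inside a compact subinterval of $(0,\infty)$, which is the stability that keeps the correction estimates from degenerating and simultaneously guarantees that the extracted component is not annihilated in the limit.

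Then I would assemble the pieces: comparing the two displays, $\prod_{i<n}(\varepsilon_i/\delta_i)$ is bounded and tends to $0$ while $\prod_{i<n}\delta_i$ stays bounded away from $0$ and $\infty$, and from this I would deduce that $\sum_n\mathcal{L}_{w_n,l_n}(f_n)$ converges a.e.; calling its sum $f_\infty$ we get $f_n\to f_\infty$ a.e. Passing to the limit in $f_{n+1}=f_n-\mathcal{L}_{w_n,l_n}(f_n)$ and using $\|\mathcal{L}_{w_n,l_n}(f_n)\|_{L^\infty}\to0$, the moving average of $f_\infty$ vanishes; since $\mathcal{L}$ returns essentially the mean of the upper and lower envelopes, this is exactly the zero--envelope--mean property of an IMF, while $\prod\delta_i\to c>0$ shows $f_\infty\not\equiv0$, and convolution against a nonnegative, compactly supported smoothing filter of the locally adapted, slowly varying length does not manufacture oscillations finer than that length, which yields the extrema/zero--crossing balance.

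I expect the last assembly step to be the real obstacle: passing from ``the corrections vanish uniformly'' to ``the telescoping series actually converges'' is \emph{not} automatic from $\prod_{i\le n}\varepsilon_i\to0$ alone, since convergence of the series needs the partial products $\prod_{i<n}\varepsilon_i$ to be summable, so a genuine rate of decay must be extracted. This is precisely where the $\delta_n$ hypothesis (through the comparison with $\prod_i\delta_i$) and the specific structure of the FP filters of Section~\ref{sec:filters} would have to be exploited, and making that quantitative is the crux of the argument. A secondary, more routine, point is verifying the combinatorial half of the IMF definition for the limit rather than for the individual iterates, for which I would invoke the smoothing/no--new--extrema behaviour of the filters together with the slow variation of the adapted mask length.
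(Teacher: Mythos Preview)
Your plan matches the paper's proof in all essentials: write one inner--loop step as $f_n-f_{n+1}=\mathcal{L}_{w_n,l_n}(f_n)$, telescope the $\varepsilon_i$ to get
\[
\|\mathcal{L}_{w_{n+1},l_{n+1}}(f_{n+1})\|_{L^\infty}=\Bigl(\prod_{i=1}^{n}\varepsilon_i\Bigr)\|\mathcal{L}_{w_1,l_1}(f_1)\|_{L^\infty}\to 0,
\]
then telescope the $\delta_i$ to keep $\|\mathcal{L}_{w_n,l_n}(|f_n|)\|_{L^\infty}$ bounded away from $0$, and read off ``moving average of the limit is zero but the limit itself is not''. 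Two remarks on where you and the paper diverge.

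First, the gap you flag---that $\|f_{n+1}-f_n\|_{L^\infty}\to0$ does not by itself give convergence of $\{f_n\}$, and that one would really need summability of $\prod_{i<n}\varepsilon_i$---is genuine, and the paper does \emph{not} close it either. The paper simply writes ``the moving average $\to0$ almost uniformly, and hence $\{f_n(x)\}$ is almost uniformly convergent'', with no further justification; so your worry is not a defect of your plan relative to the paper but a defect shared with it. You should not expect to extract summability from the stated hypotheses alone, and the paper does not attempt to.

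Second, the paper's treatment of the $\delta_n$ half is slightly different in mechanics and leads to a weaker conclusion than you aim for. It passes to the fixed--filter reformulation $f_{n+1}(x)=f_n(x)-\int_{-L}^{L}f_n(x+g_n(x,y))W(y)\,\textrm{d}y$ with scaling functions $g_n$, argues $g_n\to G$ and $f_n\to F$ (almost uniformly), and then shows $\|\int_{-L}^{L}|F(x+G(x,y))|W(y)\,\textrm{d}y\|_{L^\infty}=c\,\|\mathcal{L}_{w_1,l_1}(|f_1|)\|_{L^\infty}>0$; from ``moving average of $F$ is zero'' and ``$F\not\equiv0$'' it immediately declares $F$ an IMF. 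In particular the paper does \emph{not} verify the extrema/zero--crossing count; your final paragraph about invoking ``no--new--extrema'' behaviour of the FP filters is more than the paper actually proves. If you align your target with the paper's (zero moving average plus nontriviality), your outline is complete modulo the convergence gap above---and that gap is the paper's as well.
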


\begin{proof}
By (\ref{requi}) it follows
\begin{equation}\label{epsilon_n2}
\prod_{i=1}^n \varepsilon_i = \frac{\left\|\int_{-l_{n+1}(x)}^{\,l_{n+1}(x)}f_{n+1}(x+t)w_{n+1}(x,t)\dt \right\|_{L^{\infty}}}{\left\|\int_{-l_1(x)}^{\,l_1(x)}f_{1}(x+t)w_{1}(x,t)\dt \right \|_{L^{\infty}}}
\end{equation}
Then
\begin{equation}
\left\|\int_{-l_{n+1}(x)}^{\,l_{n+1}(x)}f_{n+1}(x+t)w_{n+1}(x,t)\dt \right\|_{L^{\infty}} \!\!\!\!\!\!=\prod_{i=1}^n \varepsilon_i \left\|\int_{-l_{1}(x)}^{\,l_{1}(x)}f_{1}(x+t)w_{1}(x,t)\dt \right\|_{L^{\infty}}
\end{equation}
If we take the limit as $n$ goes to infinity of both sides the right end side tends to zeros by hypothesis, hence
\newline
$\left\{\left\|\int_{-l_{n+1}(x)}^{\,l_{n+1}(x)}f_{n+1}(x+t)w_{n+1}(x,t)\dt \right\|_{L^{\infty}} \right \} \rightarrow 0$ as $n\rightarrow \infty$. As a result the moving average
\begin{equation}\label{mov_ave}
\int_{-l_{n+1}(x)}^{\,l_{n+1}(x)}f_{n+1}(x+t)w_{n+1}(x,t)\dt \rightarrow 0 \quad \text{almost uniformly as} \quad n\rightarrow\infty
\end{equation}
and hence $\{ f_n(x)\}$ is almost uniformly convergent.

Let $F(x)$ denote the $\lim\limits_{n\to \infty}f_n(x)$. Since
\begin{equation}
\left\|\int_{-l_{n+1}(x)}^{\,l_{n+1}(x)}|f_{n+1}(x+t)|w_{n+1}(x,t)\dt\right \|_{L^{\infty}} \!\!\!\! =\; \prod_{i=1}^n \delta_i \left\|\int_{-l_1(x)}^{\,l_1(x)}|f_{1}(x+t)|w_{1}(x,t)\dt\right \|_{L^{\infty}}
\end{equation}
if we rewrite the left hand side using the equivalent formulation introduced in (\ref{adaptive_2}) and take the limit we get
\begin{equation}\label{delta_1}
\left\|\lim\limits_{n\rightarrow \infty} \int_{-L}^{\,L}\left|f_{n+1}(x + g_{n+1}(x,y))\right| W(y)\dy \right\|_{L^{\infty}}\!\!\!\!\!\! =  \lim\limits_{n\rightarrow \infty} \left(\, \prod_{i=1}^n \delta_i  \right) \left\|\int_{-l_1(x)}^{\,l_1(x)}|f_{1}(x+t)|w_1(x,t)\dt \right\|_{L^{\infty}}
\end{equation}
Since the sequence of scaling functions $\left\{g_{n}(x,y)\right\}$ is by construction almost uniformly convergent whenever $\{ f_n(x)\}$ almost uniformly converges, if we assume $\lim\limits_{n\to \infty}g_n(x,y)=G(x,y)$, we want to show that (\ref{delta_1}) becomes
\begin{equation}\label{delta_2}
\left\|\int_{-L}^{\,L}|F(x+G(x,y))|W(y)\dy \right\|_{L^{\infty}}\!\!\!\!\!\! = \; c \left\|\int_{-l_1(x)}^{\,l_1(x)}|f_{1}(x+t)|w_1(x,t)\dt \right\|_{L^{\infty}} \!\!\!\!\!\!>0
\end{equation}

The right hand side follows directly from the hypotheses, while for the left hand side it suffices to prove that $\left\| f_{n}(x + g_{n}(x,y))- F(x+G(x,y))\right\|_{L^{\infty}}\rightarrow 0$ as $n\rightarrow \infty$.  We start observing that

\begin{equation}
\begin{split}
 \left\| f_{n}(x + g_{n}(x,y))\right. & \left.-F(x+g_{n}(x,y))+F(x+g_{n}(x,y))-F(x+G(x,y))\right\|_{L^{\infty}}\\
\leq & \left\| f_{n}(x + g_{n}(x,y))-F(x+g_{n}(x,y))\right\|_{L^{\infty}}\! +\left\|F(x+g_{n}(x,y))-F(x+G(x,y))\right\|_{L^{\infty}}
\end{split}
\end{equation}

Since $\{ f_n(x)\}$ is almost uniformly convergent to $F(x)$, we have $\forall \,\varepsilon > 0$  $\exists\, N$  such that \newline
$\left\| f_{n}(x + g_{n}(x,y))-F(x+g_{n}(x,y))\right\|_{L^{\infty}}\leq \varepsilon/2$ $\forall\, n \geq N$. From the continuity of $f(x)=f_1(x)$ it follows by construction that $f_n(x)$ is continuous for every $n\in\N$ and that also $F(x)$ has to be continuous. Therefore $\forall\, \varepsilon > 0$ $\exists\, \delta$ such that $\left\|F(x+g_{n}(x,y))-F(x+G(x,y))\right\|_{L^{\infty}}\leq \varepsilon/2$ for every $\left\| g_{n}(x,y)-G(x,y)\right\|_{L^{\infty}}\leq \delta$. Finally, since  $\{ g_n(x,y)\}$ is almost uniformly convergent to $G(x,y)$, we have $\forall\, \delta > 0$  $\exists\, \widetilde{N}$  such that  $\left\| g_{n}(x,y)-G(x,y)\right\|_{L^{\infty}}\leq \delta$ $\forall\, n \geq \widetilde{N}$.

In conclusion, if we fix $\varepsilon > 0 $, there are $ N,\, \widetilde{N}$  and $M = \max\{N,\, \widetilde{N}\}$ such that

\begin{equation}
\begin{split}
\left\| f_{n}(x + g_{n}(x,y))- F(x\right.& + \left.G(x,y))\right\|_{L^{\infty}} \leq \left\| f_{n}(x + g_{n}(x,y))- F(x+g_{n}(x,y))\right\|_{L^{\infty}}\! +\\
&+\left\|F(x+g_{n}(x,y))-F(x+G(x,y))\right\|_{L^{\infty}}\leq \frac{\varepsilon}{2} + \frac{\varepsilon}{2} \leq \varepsilon\quad\quad \forall n\geq M
\end{split}
\end{equation}

\noindent Whence (\ref{delta_2}) holds true.

\noindent By (\ref{mov_ave}) the moving average of $F(x)$ is zero, and by (\ref{delta_2}) $F(x)$ itself is non zero.  Hence $F(x)$ is an IMF.
\end{proof}

We point out that this  theorem is simply an a posteriori criterion for the convergence of the ALIF technique. No sufficient conditions on the filter and mask length $l_n(x)$ are given which ensure a priori the convergence of this algorithm.

Furthermore we note that in order to satisfy the convergence conditions in Theorem \ref{theo_2}, it is not necessary to have $\varepsilon_n<1$  for each $n\in \mathbb{N}$. The $\varepsilon_n$'s are allowed to be greater than $1$ for some $n\in\mathbb{N}$.
This is consistent with what we observe in the implementation of ALIF.
There are signals whose $L^{\infty}$ norm grows at the beginning of the inner iteration but eventually converge.
We should also remark that $\delta_n$ does not have to be greater than $1$ to have $\prod \delta_i$ to converge to a positive value, for example $\prod\limits_{n=1}^{\infty} (1- \frac{1}{2^n}) >0$.

We conclude this section observing that, even though the function $l_n(x)$ can be computed at each step $n$ of an inner loop, in the implemented code we compute the mask length only in the first step of an inner loop  and then use that value for the subsequent steps. This is to ensure, as we do in the IF technique, that each IMF contains only a limited set of instantaneous frequencies.
Therefore the operators $\mathcal{S}$ and $\mathcal{L}$ are going to be independent on the step number $n$ so that, given the signal $f$, the $k$--th IMF is $I_k=\lim_{n\rightarrow\infty} \mathcal{S}^n(h)$, where $h=f-I_1-\ldots-I_{k-1}$, $\mathcal{S}(h)=h-\mathcal{L}(h)$, and $\mathcal{L}(h)(x)=\int_{-l(x)}^{l(x)} h(x+t)w(t,x)\dt$, where $l(x)$ is the mask length computed in the beginning of the inner loop and $w(t,x)$ a suitable filter function with support in the interval $[-l(x),\,l(x)]$.

\section{Local Filters developed from a PDE model}\label{sec:filters}

An important aspect in ALIF and IF methods is the choice of the low pass filters used in the inner loops.
To handle non--linear and non--stationary signals, we want to design smooth filters
with compact support. We produce such filters by means of a diffusion process.
The idea is very natural: when applying the diffusion to the data, the oscillations will eventually be eliminated and a curve will be generated as the average.

It is well known that diffusion processes are associated with PDEs.  For a given diffusion equation, we can get its fundamental solution and treat it as the filter in the iterative filtering algorithm. Here we note that the well known heat equation may not be a good choice,
because its solution leads to a filter with infinite support which is not desirable.
To have compact support and smoothness for the filters, we select Fokker--Planck equations
to construct what we call \emph{FP filters}.
\noindent Let us consider the Fokker--Planck equation
\begin{equation}\label{equ15}
p_t=-\alpha(h(x)p)_x+\beta(g^2(x)p)_{xx}, \quad \alpha,\; \beta >0
\end{equation}
Assume $h(x)$ and $g(x)$ are smooth enough functions such that there exist $a<0<b$ satisfying:
\begin{itemize}
\item $g(a)=g(b)=0,\quad g(x)>0 $ for $x\in(a,b)$
\item $h(a)<0<h(b)$
\end{itemize}
The $(g^2(x)p)_{xx}$ term generates diffusion effect and pulls out the solution $p$ from the center of $(a,b)$ towards $a$ and $b$ while the $-(h(x)p)_x$ term transports it from $a$ and $b$ towards the center of the interval $(a,b)$. When the two forces are balanced, the steady state is achieved. There exists a smooth non trivial solution $p(x)$ of the stationary problem:
\begin{equation}\label{equ16}
-\alpha(h(x)p)_x+\beta(g^2(x)p)_{xx}=0
\end{equation}
satisfying $p(x)\geq 0$ for  $x\in(a,b)$, and $p(x)=0$ for $x\not\in(a,b)$. That means the solution is concentrated in the interval $[a,b]$ and there is no leakage outside. So $p(x)$ is a local filter satisfying our requirements.

Based on this analysis and given the  Dirac delta function $\delta(x)$,  we shall use the solution to the initial value problem
\begin{equation}\label{equation16}
\begin{aligned}
& p_t  =-\alpha(h(x)p)_x+\beta(g^2(x)p)_{xx}\\
& p(x,0) =\delta\left(x-\frac{a+b}{2}\right)
\end{aligned}
\end{equation}
as the filter in our decomposition algorithm.  By adjusting the functions $h(x), g(x)$ as well as the coefficients $\alpha,\beta$, we can get different shapes for the FP filter. In Figure \ref{fig4}, we plot two steady states, obtained by  Crank--Nicolson scheme, for fixed $h(x)$ and $g(x)$ functions and two different couples $\alpha$, $\beta$.

\begin{figure}
        \begin{subfigure}[b]{0.3\textwidth}
                \centering                
                \includegraphics[width=\textwidth]{./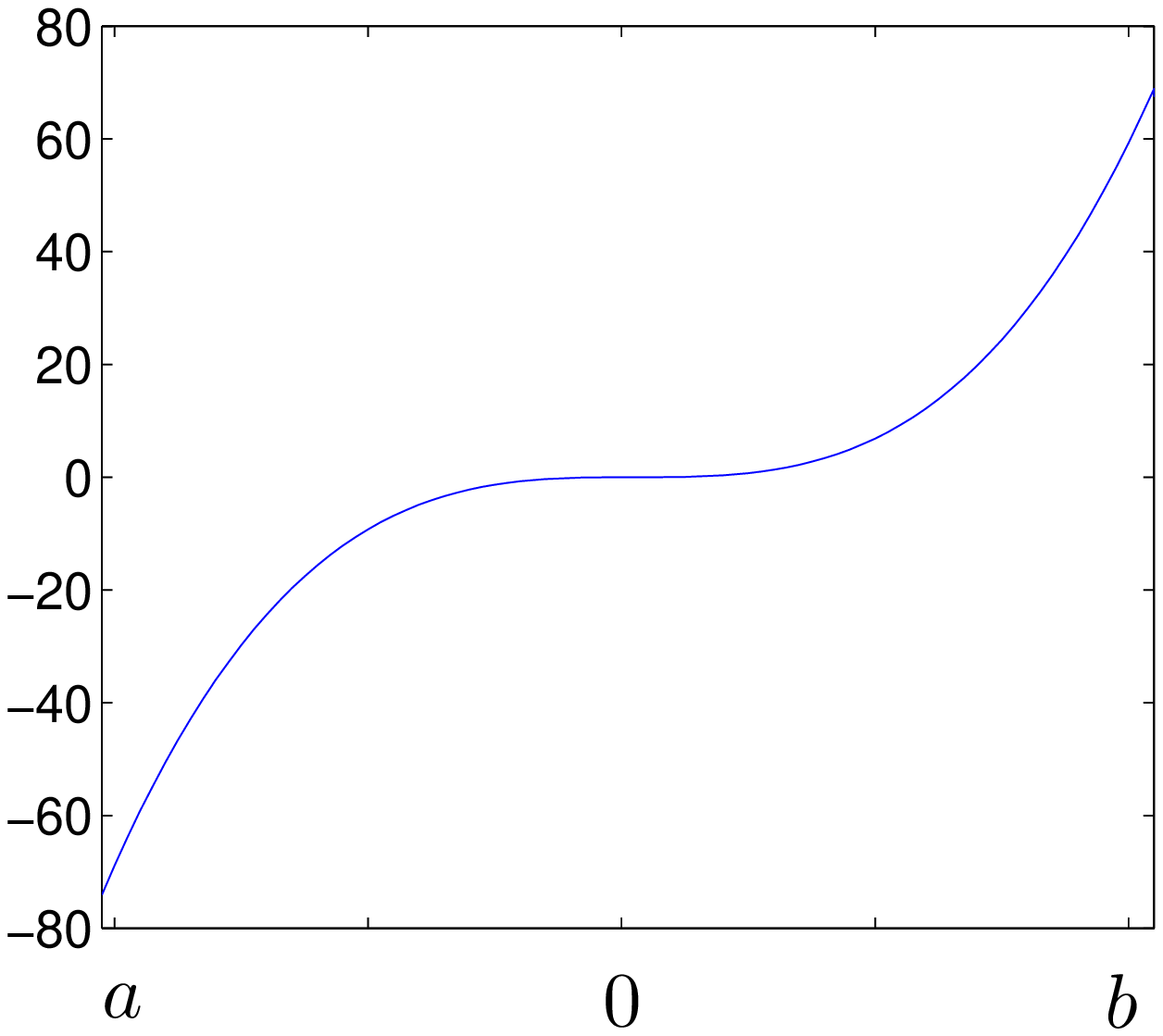}
                \caption{$h(x)$}
                \label{fig4-1}
        \end{subfigure}%
        ~ 
        \begin{subfigure}[b]{0.3\textwidth}
                \centering
                \includegraphics[width=\textwidth]{./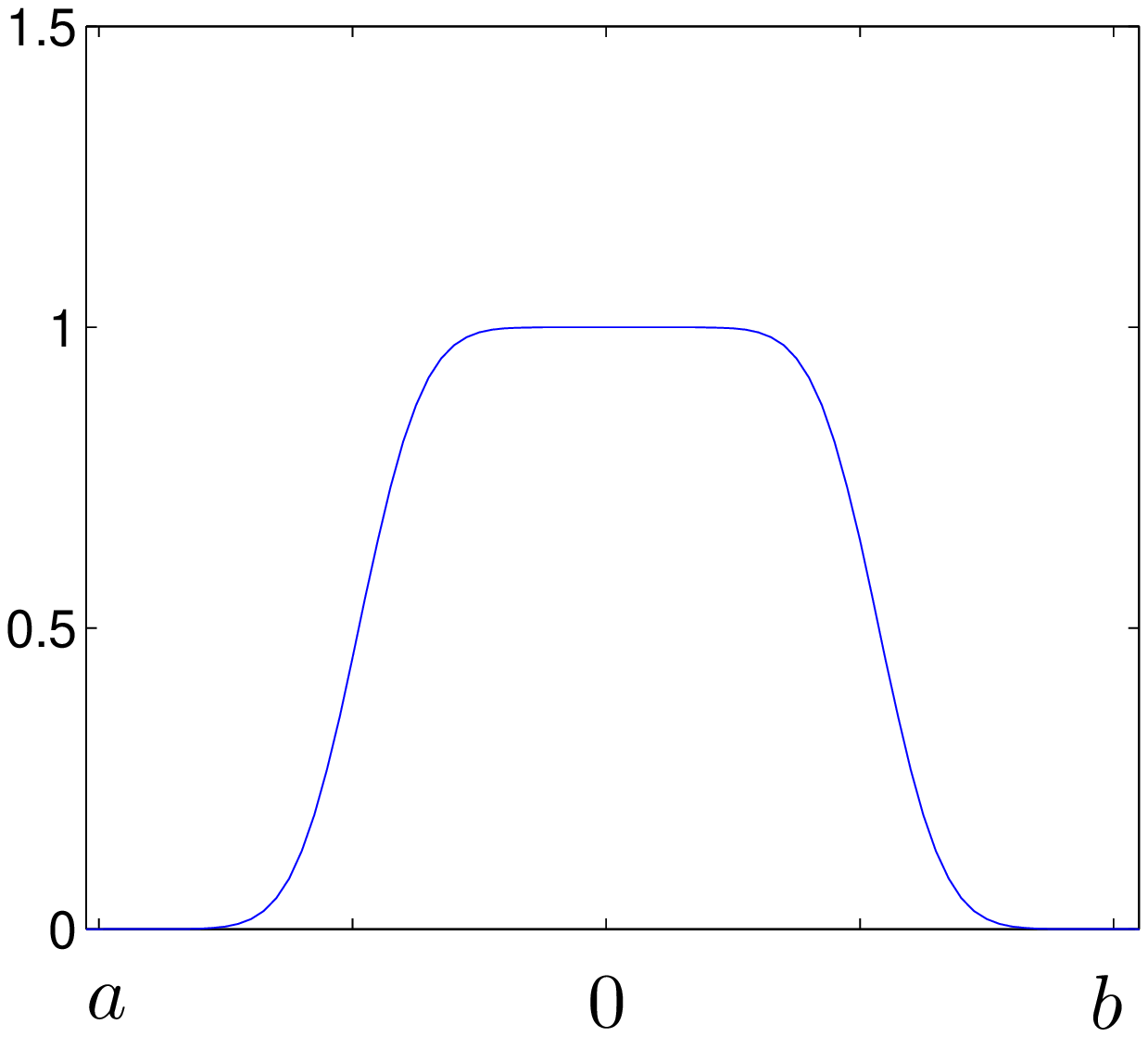}
                \caption{$g^2(x)$}
                \label{fig4-2}
        \end{subfigure}
        ~ 
        \begin{subfigure}[b]{0.3\textwidth}
                \centering
                \includegraphics[width=\textwidth]{./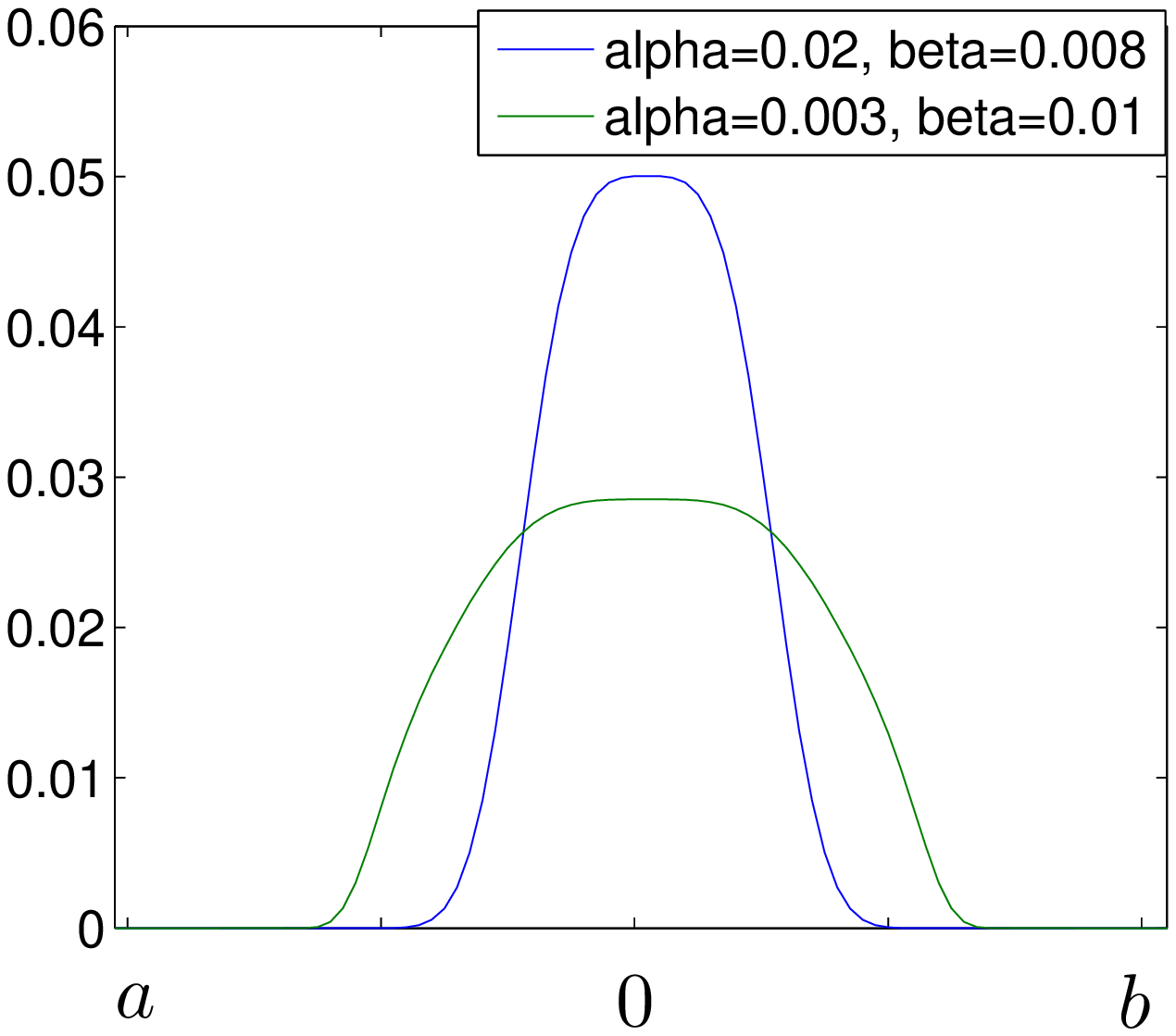}
                \caption{$p(x)$}
                \label{fig4-3}
        \end{subfigure}
        \caption{coefficient functions and steady states of (\ref{equation16}). (\subref{fig4-1}) $h(x)$  is an odd function like $x^3$. (\subref{fig4-2}) $g^2(x)$ is an even function, for instance a smooth approximation of the step function. (\subref{fig4-3}) Two steady states for coefficients $\alpha=0.02$, $\beta=0.008$ and $\alpha=0.003$, $\beta=0.01$ respectively. }\label{fig4}
\end{figure}

We can see that the larger $\alpha$ is, the more the weight is concentrated in the center; on the other hand, the larger $\beta$ is, the more the weight is diffused and less concentrated in the center. When designing the local FP filters based on the Fokker--Planck  equation we first fix the functions $h(x)$ and $g(x)$ and then adjust the coefficients $\alpha$ and $\beta$ to get the filter shape we want.

Another issue is on the design of FP filters with same shape with different lengths. There are at least two approaches available.  One way is to solve the Fokker--Planck equation again with $h(x)$ and $g(x)$ scaled in $x$ for every different $a$ or $b$. Assume we get the steady state of (\ref{equation16}), in order to get the filter with length from $\hat{a}$ to $\hat{b}$, we solve (\ref{equation17}) and the steady state is the filter we want.
\begin{equation}\label{equation17}
\begin{aligned}
& p_t  = -\alpha\left(h\left(\hat{a}\frac{x-(a+b)/2}{a}+\frac{\hat{a}+\hat{b}}{2}\right)p\right)_x\\
&\quad\quad\quad +\beta\left(g^2\left(\hat{a}\frac{x-(a+b)/2}{a}+\frac{\hat{a}+\hat{b}}{2}\right)p\right)_{xx}\\
& p(x,0) =\delta\left(x-\frac{\hat{a}+\hat{b}}{2}\right)
\end{aligned}
\end{equation}

The other way is to solve the Fokker--Planck equation for a fixed $a$ and $b$ only once and take a spatial interpolation of the steady state to get the filter with the desired length.

\begin{figure}
 \begin{center}
                \includegraphics[width=0.8\textwidth]{./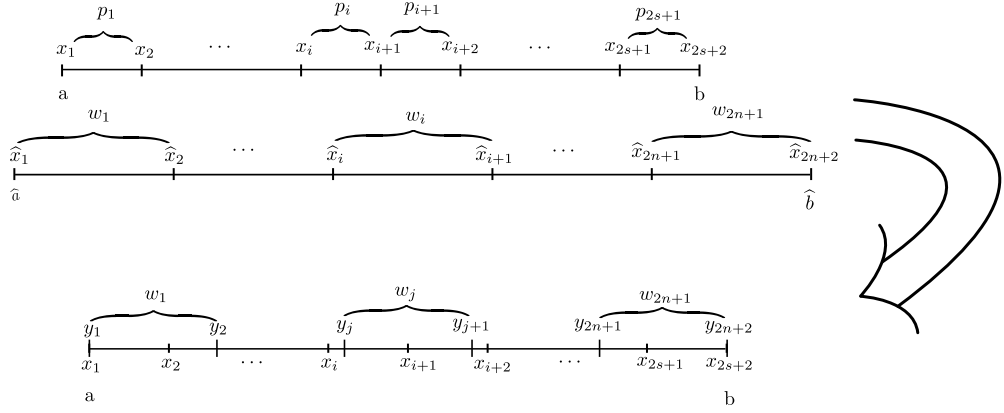}
                \caption{Example of spatial scaling and shifting of the interval $[\hat{a}, \hat{b}]$ to match the interval $[a,b]$.}\label{fig:SpatialScaling}
 \end{center}
\end{figure}

Given the  numerical solution $p(x)$ for the steady state of (\ref{equation16}) computed at the discrete points $\{x_1,\, x_2,$ $\,\ldots ,\, x_{2s+2}\}$, where $s$ is a large natural number and $x_1=a$, $x_{2s+2}=b$, the value $p_i$, $i=1,\,2,\,\ldots,\,2s+1$, represents the weight of the filter in the interval $[x_i,x_{i+1})$ and the sum of weights in all the intervals equals $1$.
We want to compute the numerical solution for the steady state of (\ref{equation16}) for a new interval $[\hat{a}, \hat{b}]$ with a different discretization $\{\hat{x}_1,\, \hat{x}_2,\, \ldots ,\, \hat{x}_{2n+2}\}$, where $n<s$ and $\hat{x}_1=\hat{a}$, $\hat{x}_{2n+2}=\hat{b}$, as shown in Figure \ref{fig:SpatialScaling}. We call this new numerical solution $w(x)$. The idea is to make use of the previously computed values $p_i$ to approximate the new numerical solution $w(x)$.
We start mapping the interval $[\hat{a}, \hat{b}]$ to the interval $[a,b]$ by linear scaling and shifting.
As a result, the points $\{\hat{x}_1, \hat{x}_2, \ldots , \hat{x}_{2n+2}\}$ are mapped into $\{y_1, y_2,\ldots, y_{2n+2}\}$ with $y_1=a$, $y_{2n+2}=b$, ref. Figure \ref{fig:SpatialScaling}.
The values $w_j, j=1,2,\ldots,2n+1$, in the interval $[\hat{a},\hat{b}]$ will be the weights in the intervals $[y_j, y_{j+1})\subseteq [a,b]$, $j=1,2,\ldots, 2n+1$, given by

 \begin{equation}\label{equ192}
 w_j=\int_{y_j}^{y_{j+1}}p(x)\dx
 \end{equation}

The integral in (\ref{equ192}) can be approximated by the Riemann sum based on the discrete points $x_i$ and the weights $p_i$, $i=1,2,\ldots,2s+1$. If $y_j$ falls in the interval $(x_{m_1-1}$,\, $x_{m_1})$ and $y_{j+1}$ in $(x_{m_2}$,\, $x_{m_2+1})$, then
the filter weight $w_j$ will be
\begin{equation}\label{equ194}
w_j = p_{m_1-1}(x_{m_1}-y_j)+ \sum_{i=m_1}^{m_2-1}p_i+ p_{m_2}(y_{j+1}-x_{m_2})
\end{equation}
Using this special interpolation method, the shapes of filters with different lengths are the same. Moreover, the filter length can be any positive real number, also a non integer one.

In summary to produce FP filters with a fixed shape for different support lengths we can either solve the PDE each time or we can solve it once and then interpolate the computed filter to produce the new ones. Solving PDEs numerically is much slower than interpolating existing filters, therefore in our simulations we use the interpolation strategy to get FP filters with same shape for different lengths.

\section{A Different View of Instantaneous frequency and phase}\label{sec:InstFreq}

We first consider the instantaneous frequency definition proposed by Huang et al. which allows to construct the spectrum of a signal \cite{huang1999new, huang2009instantaneous, wu2004study}.
This definition is based on the Hilbert transform of a signal $f(x)$, which is given by
\begin{equation}\label{equ1}
H(f)(x) = \frac{1}{\pi}\text{p.v.} \int_{-\infty}^\infty\frac{f(\tau)}{x-\tau}\textrm{d}\tau
\end{equation}
provided this integral exists as a principal value \cite{hahn1996hilbert}. It is well known that
$z(x) = f(x) + i H(f)(x)$ is an analytic function \cite{gabor1946theory} and one can write it as
\begin{equation}\label{equ2}
z(x) = f(x)+i H(f)(x) = a(x)e^{i \theta(x)}
\end{equation}
where $a(x)$ and $\theta(x)$, both real functions, represent the amplitude and the phase of $z(x)$ respectively. The instantaneous frequency $w(x)$ for the signal $f(x)$ is defined as
\begin{equation}\label{equ3}
w(x)=\frac{\textrm{d}\theta(x)}{\dx}
\end{equation}

This definition may be controversial, first of all because it may lead to negative frequencies, which are not meaningful in practice.
To overcome this issue we simply need to first decompose the signal into IMFs, whose statistical significance was studied in \cite{wu2005statistical}. The IMFs, which can be produced using for instance EMD, IF or ALIF algorithm are guaranteed to have a well behaved positive instantaneous frequency due to their properties: number of extrema and number of zero crossings that is either equal or differ at most by one and the mean value of the upper and lower envelope is zero at any point.

The other problem is that the Hilbert transform is a global operator which is not ideal for a local time--frequency analysis. In this section, we present a new definition  which relies only on local information.

Before we move on to the new definition of the instantaneous frequency, we first review the behavior of a system of second ordinary differential equations (ODEs) in the polar coordinates, where the rotation speed corresponds to one equation of the system. For example, consider the linear system of ODEs $({x}' = \cos{x} ,{y}' = -\sin{x} )$.
By changing the coordinate from $(x,y)$ to $(r,\theta)$ by
$x = r\cos\theta$, $y =-r\sin\theta$,
this linear system can be written as $({\theta}' = 1, {r}' = 0 )$.
The rotation speed of this ODE system is given directly as ${\theta}'$ in the first equation.
Since  ${\theta}'$ is a constant, the rotation speed is a constant, which is consistent with the previous analysis.
Let us consider also the case of a non--linear ODE such as the van der Pol oscillator given as
${x}''+\alpha(x^2-1){x}' + x =0, \alpha>0$.
The standard first--order form of it is $({x}' = y, {y}' = -x -\alpha(x^2 -1)y)$.
Using polar coordinates $x= r\cos\theta$, $y=-r\sin\theta$, this non--linear system can be written as $({r}' = -\alpha(r^2\cos^2\theta-1)r\sin^2\theta , {\theta}' = 1 + \alpha(r^2\cos^2\theta-1)r\sin\theta\cos\theta )$.
The second equation gives the rotation speed ${\theta}'$ which is not a constant any more. When $\alpha<< 1$, ${\theta}'$ is positive.

Let $f(x)$ be an IMF that represents some pattern of oscillations. We treat this $f(x)$ as the $x$ coordinate of some second order ODE, we can get the frequency naturally as the derivative of the phase angle in the polar coordinate ${\theta}'$. It is not necessary to derive the corresponding ordinary differential equation, instead we can compute the phase angle and the rotation speed directly. The procedure contains  two steps: first $f(x)$ is mapped to  $\theta(x)$ in the polar coordinate and second the rotation speed ${\theta}'$ is computed. We mainly have to deal with step one since the second step consists simply in taking a derivative.  When mapping $f(x)$ to $\theta(x)$, we should get rid of the impact of $r$ since
 $r$ and $\theta$ are independent in the polar coordinate. The way we do it is to normalize both $f(x)$ and $f'(x)$ by their envelopes. Thus after normalization, $(f(x),f'(x))$ is the unit circle or a perturbation of the unit circle if the normalization is not perfect.
 In the latter case,
although the perturbation is not a perfect unit circle, we can still see its rotation standing at the center.
Thus the rotation speed ${\theta}'$ is a positive function.

Based on these observations, we propose a new local definition of instantaneous phase and frequency as the phase angle and the rotation speed of the phase angle respectively.

Let $f(x)$ be a function satisfying the IMF requirements, there exists an envelope function $q(x)$ of $f(x)$  such that
\begin{equation}\label{equ10}
F_1(x):=f(x)/q(x) \in[-1,1]
\end{equation}
Considering the derivative of $f(x)$, there exists an envelope function $r(x)$ of $f'(x)$ such that
\begin{equation}\label{equ11}
F_2(x):=f'(x)/r(x) \in[-1,1]
\end{equation}
Functions $q(x)$ and $r(x)$ are not unique. They can be taken for instance to be the cubic splines connecting the local extrema in $f(x)$ and $f'(x)$ respectively.
If we define
\begin{equation}\label{equ12}
F(x)=F_1(x) + i F_2(x)
\end{equation}
then $F(x)$ corresponds to a curve in $[-1,1]\times[-1,1]$ on the complex plane. $F(x)$ is a perturbation of the unit circle and we define the angle for the rotation of $F(x)$ as
\begin{equation}\label{equ13}
\theta(x)=-\arctan{\frac{F_2(x)}{F_1(x)}}
\end{equation}
which corresponds to the instantaneous phase of $f(x)$. While its instantaneous frequency can be defined as
\begin{equation}\label{equ14}
w(x)=\frac{\textrm{d}\theta(x)}{\dx}
\end{equation}

We observe that even though the envelope functions $q(x)$ and $r(x)$ are not unique, in general the instantaneous phase and frequency we just defined depend scarcely on their choice.

Another observation is that if the magnitude of the IMF $f(x)$ or its derivative change significantly in a short time,
the envelopes $q(x)$ and $r(x)$ might not follow closely the changes of the IMF and its derivative. This may cause unexpected errors in the
instantaneous phase and frequency. So it is advisable to identify and properly handle these sudden
changes when constructing the envelopes. To address this problem we can make use of the essentially non--oscillatory (ENO) technique for shock capturing developed in computational fluid dynamics \cite{harten1987uniformly,shu1999high}.

The ENO technique is a {\it divide et impera} method. First, based on the differences of consecutive extrema of a given signal, we detect possible sudden changes in the magnitude using a preselected threshold. If a sudden change is detected between two consecutive extreme points we compute the differences of consecutive sample points in between those extrema. The sudden change is assumed to happen where the left difference differs most from
the right difference. We use this point to divide the signal into two parts and to construct two separate envelopes one for the left and one for the right hand side.

Let us consider two test examples where we compute the instantaneous frequency using both the new definition and  the definition based on Hilbert transform.\\
\textbf{Test 1} The signal in Figure \ref{fig2-1} is given by
\begin{equation}\label{signal_1}
f(x)=(1+0.2\cos{(0.06\pi x)})\sin{[(1+0.1x)x]}, \quad x\in[0,40]
\end{equation}
The amplitude of the signal changes slowly, both frequency analysis methods show the gradual change in the instantaneous frequency as shown in Figure \ref{fig2}. However when there is a significant change in the amplitude of the signal the instantaneous frequencies defined by the two approaches are different.
\begin{figure}[h]
        \begin{subfigure}[b]{0.3\textwidth}
                \centering                
                \includegraphics[width=\textwidth]{./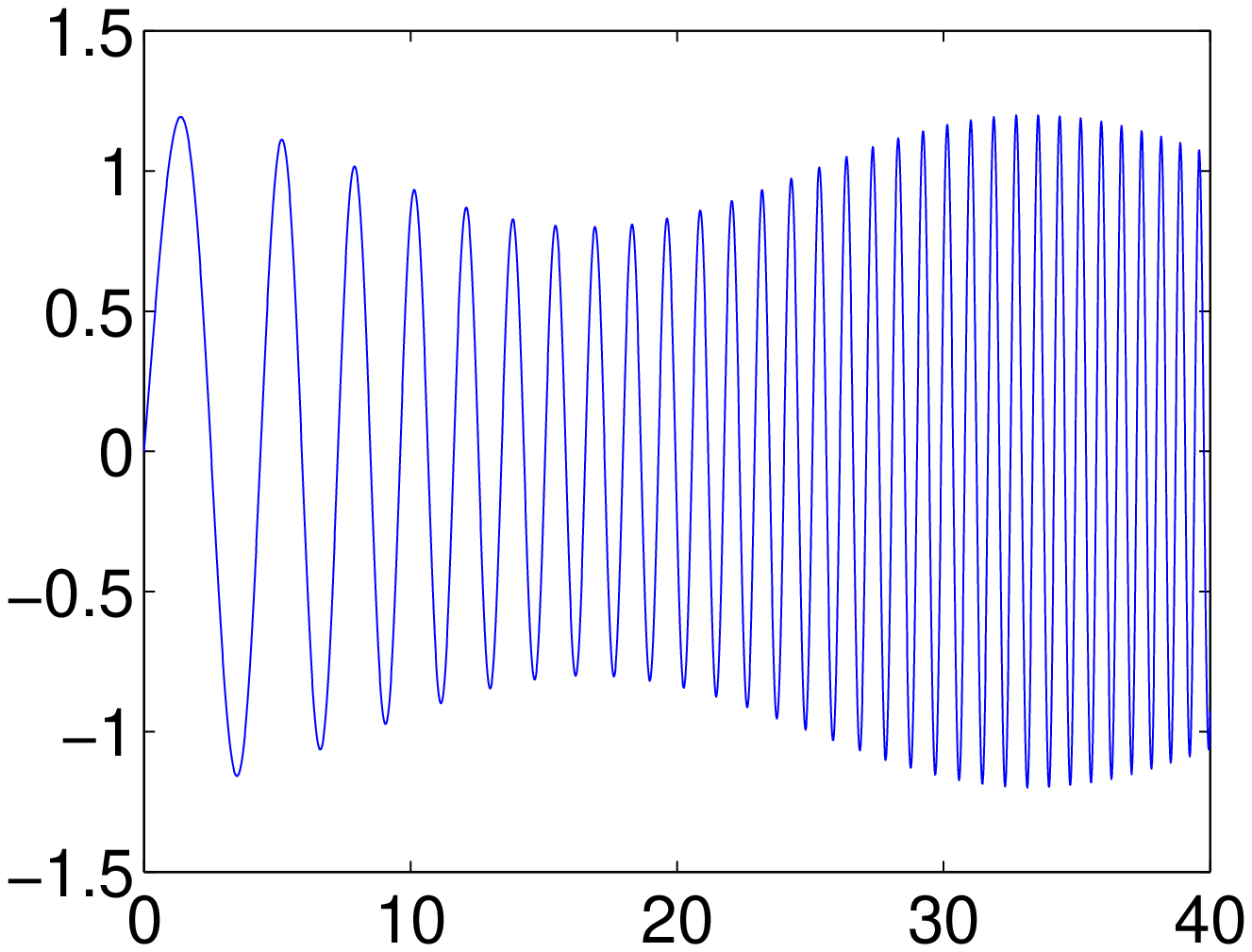}
                \caption{}
                \label{fig2-1}
        \end{subfigure}%
        ~
        \begin{subfigure}[b]{0.3\textwidth}
                \centering
                \includegraphics[width=\textwidth]{./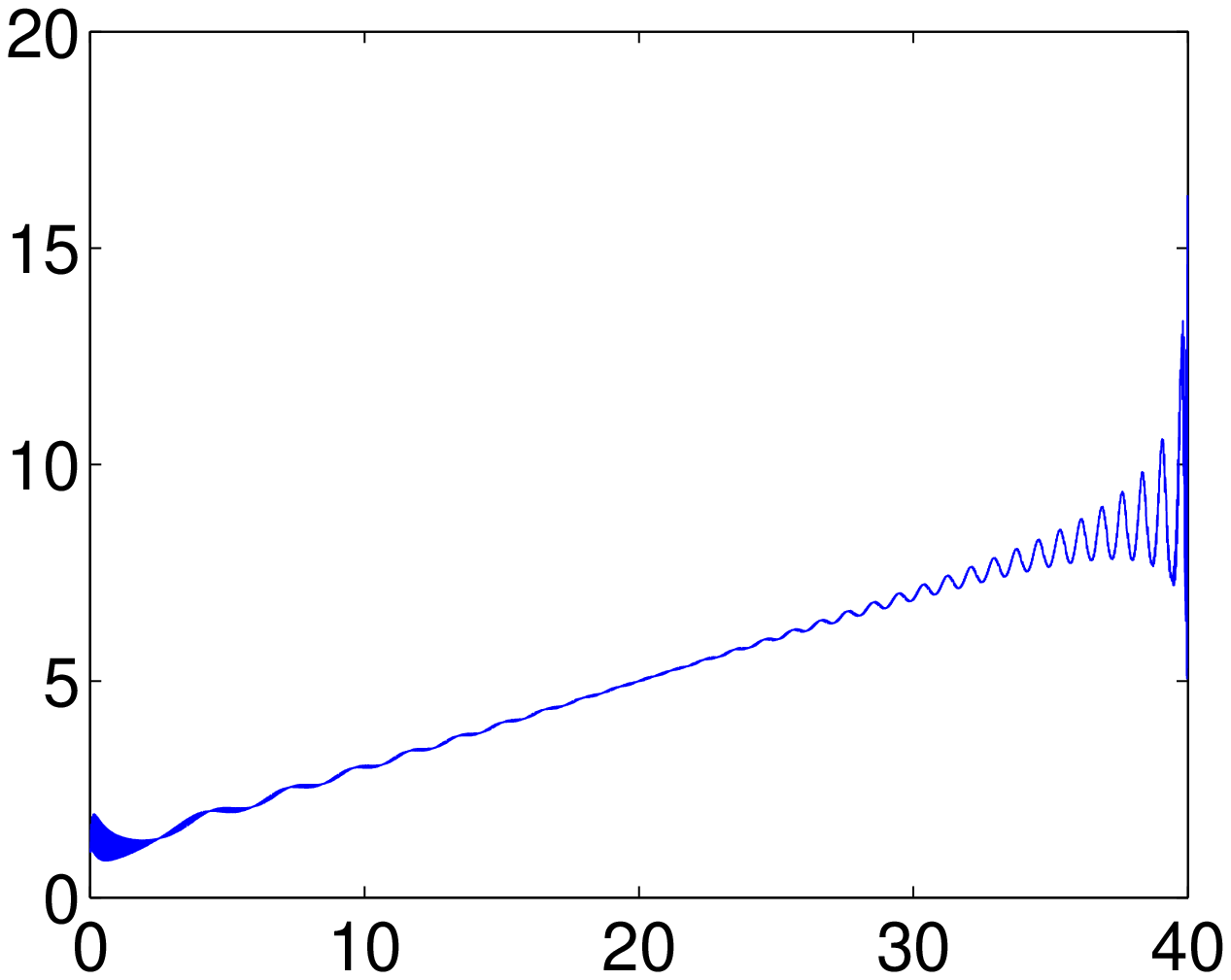}
                \caption{}
                \label{fig2-2}
        \end{subfigure}
        ~
        \begin{subfigure}[b]{0.3\textwidth}
                \centering
                \includegraphics[width=\textwidth]{./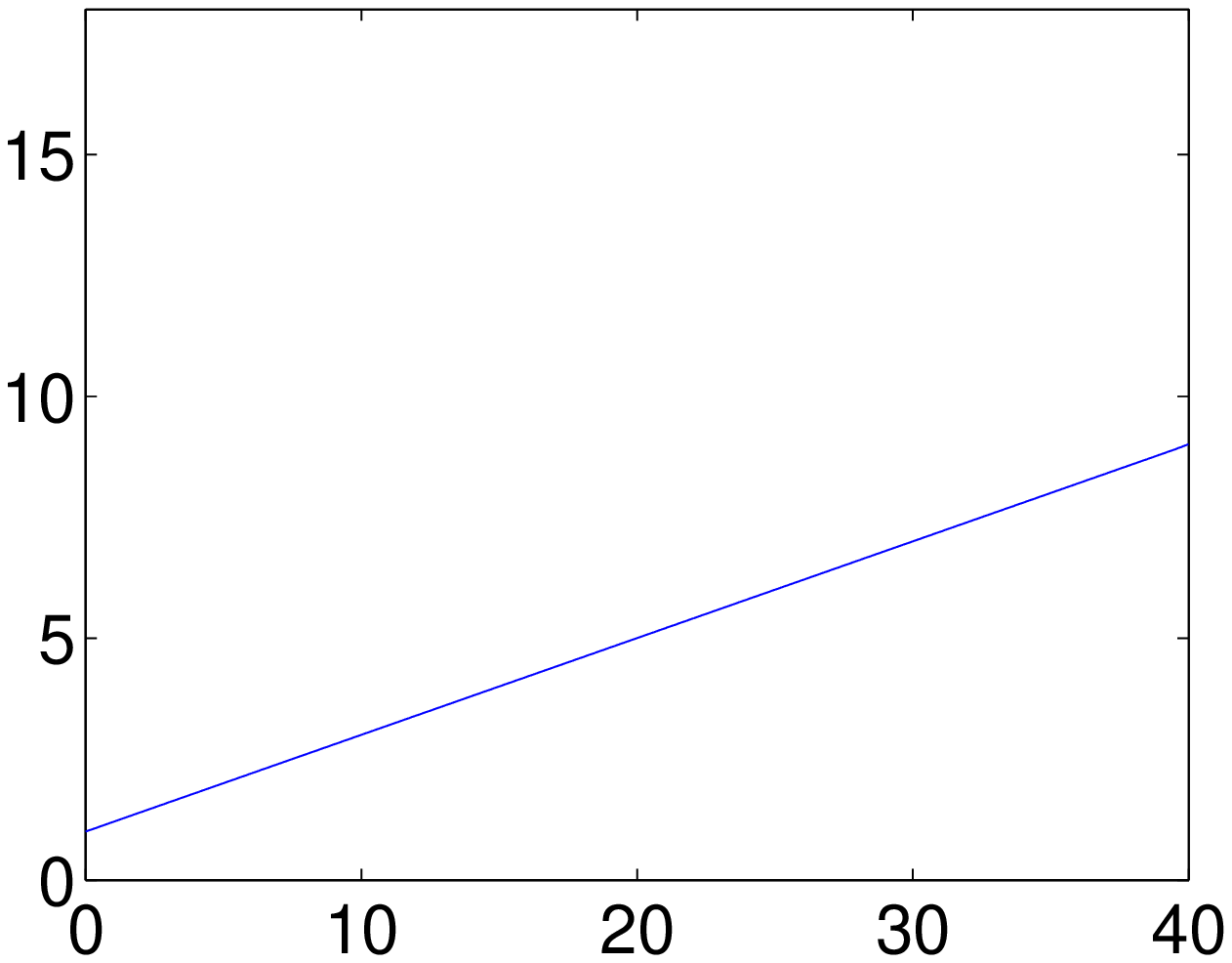}
                \caption{}
                \label{fig2-3}
        \end{subfigure}
        \caption{Test 1. $(a)$ The signal defined in (\ref{signal_1}). The oscillation gradually becomes faster and the amplitude changes mildly. $(b)$  Instantaneous frequency  computed using Hilbert transform. It shows the gradual change in the instantaneous frequency but has some oscillations. $(c)$ Instantaneous frequency computed using the proposed method. It does show the gradual change in the instantaneous frequency and it has almost no oscillations.  }\label{fig2}
\end{figure}\\
\textbf{Test 2} The signal in Figure \ref{fig3-1}  is generated by
\begin{equation}\label{signal_2}
f(x)=a(x)\sin{(2 \pi x)}, \text{where} \quad a(x)=1-0.9\chi_{[3,6]},\quad x\in[0,10]
\end{equation}
 It has a sudden change in the amplitude at time $3$ and $6$. Except for these two positions, the signal is a constant frequency signal. We expect the time frequency analysis to return an instantaneous frequency which is almost a constant. Using the instantaneous frequency definition based on Hilbert transform, the transitory change in the amplitude affects faraway positions and this leads to strange behaviors in the instantaneous frequency. On the other hand, using the proposed method together with the ENO technique we get an instantaneous frequency which is almost a constant, ref. Figure \ref{fig3-3}.

\begin{figure}[h]
        \begin{subfigure}[b]{0.3\textwidth}
                \centering
                \includegraphics[width=\textwidth]{./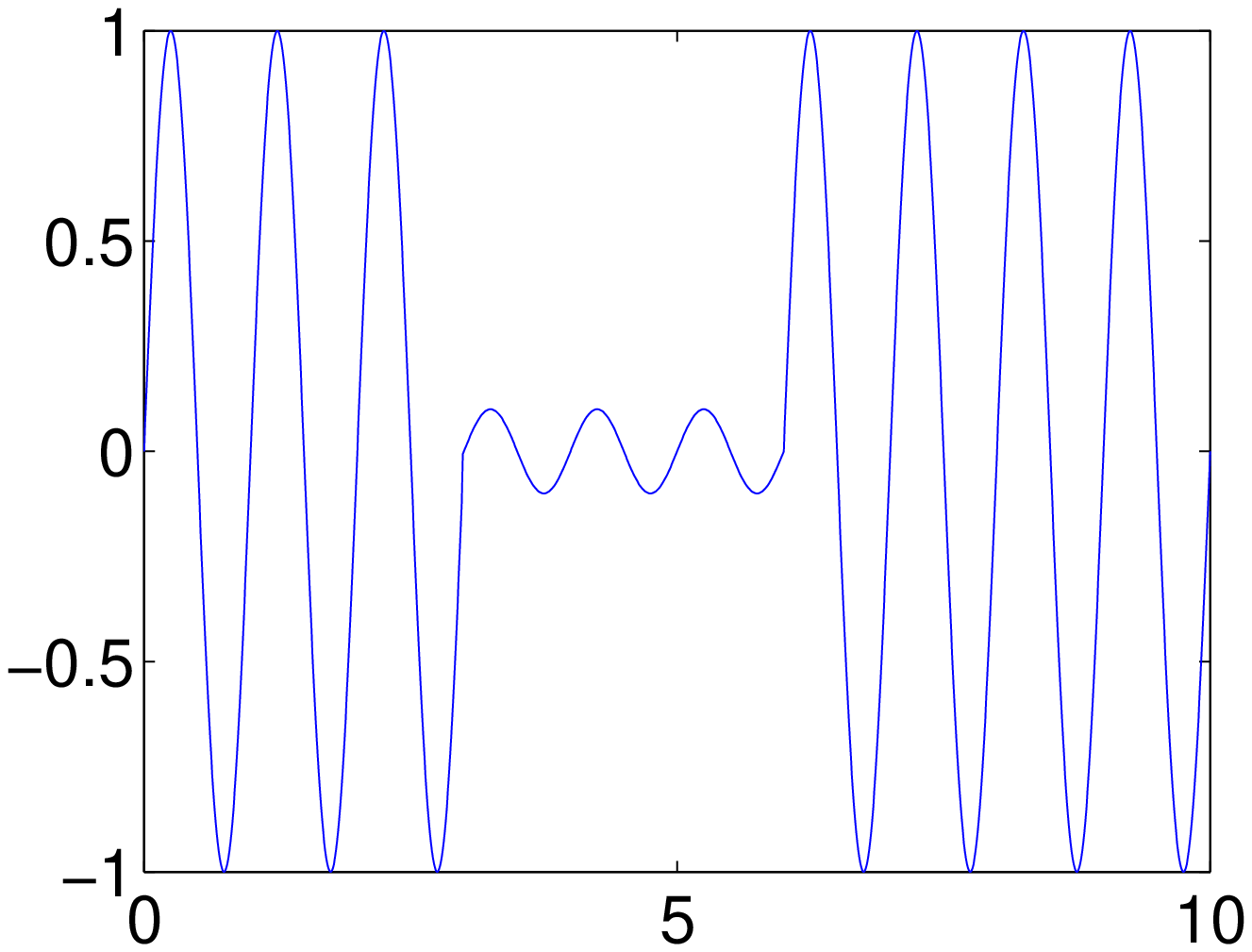}
                \caption{}
                \label{fig3-1}
        \end{subfigure}%
        ~ 
        \begin{subfigure}[b]{0.3\textwidth}
                \centering
                \includegraphics[width=\textwidth]{./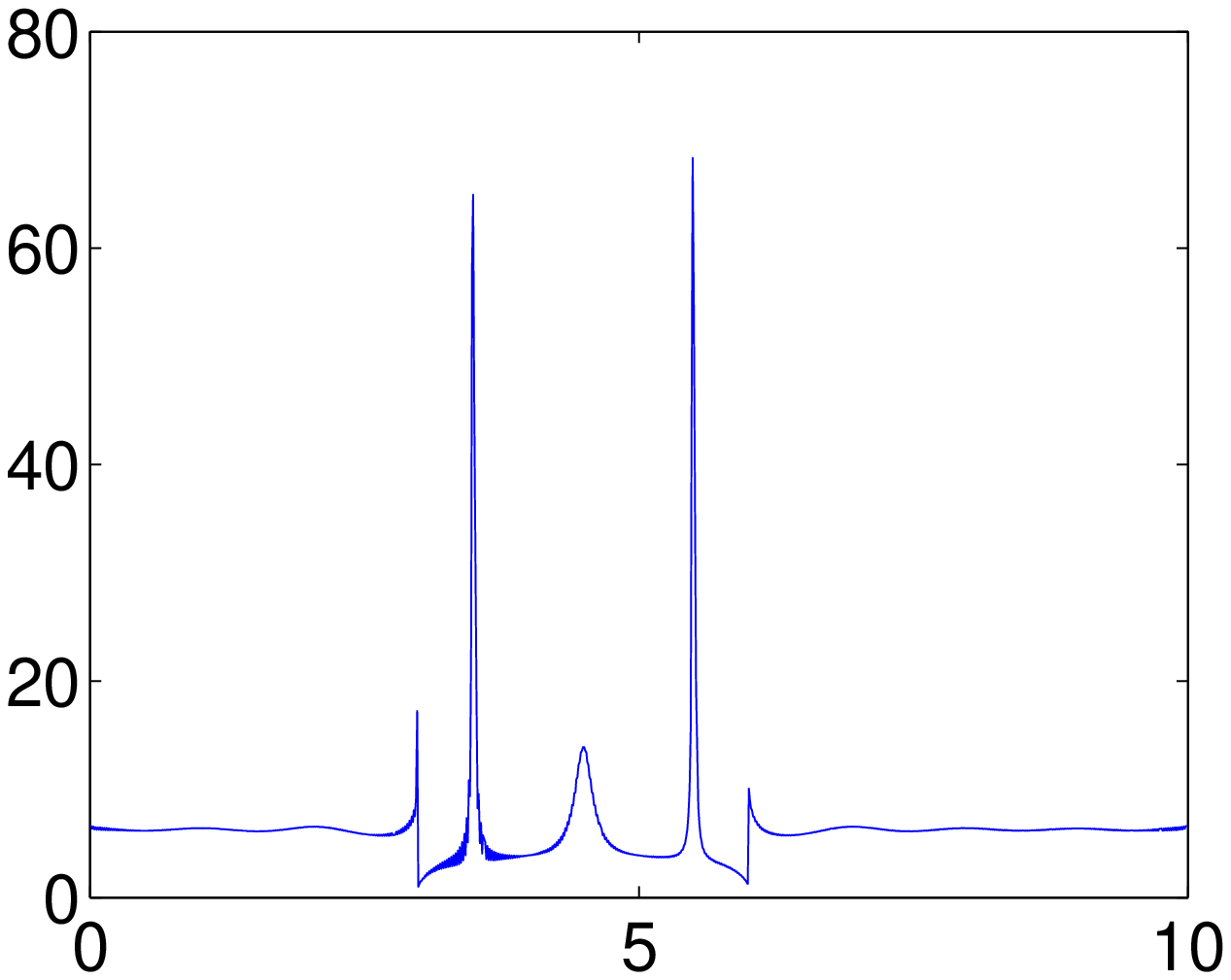}
                \caption{}
                \label{fig3-2}
        \end{subfigure}
        ~ 
        \begin{subfigure}[b]{0.3\textwidth}
                \centering
                \includegraphics[width=\textwidth]{./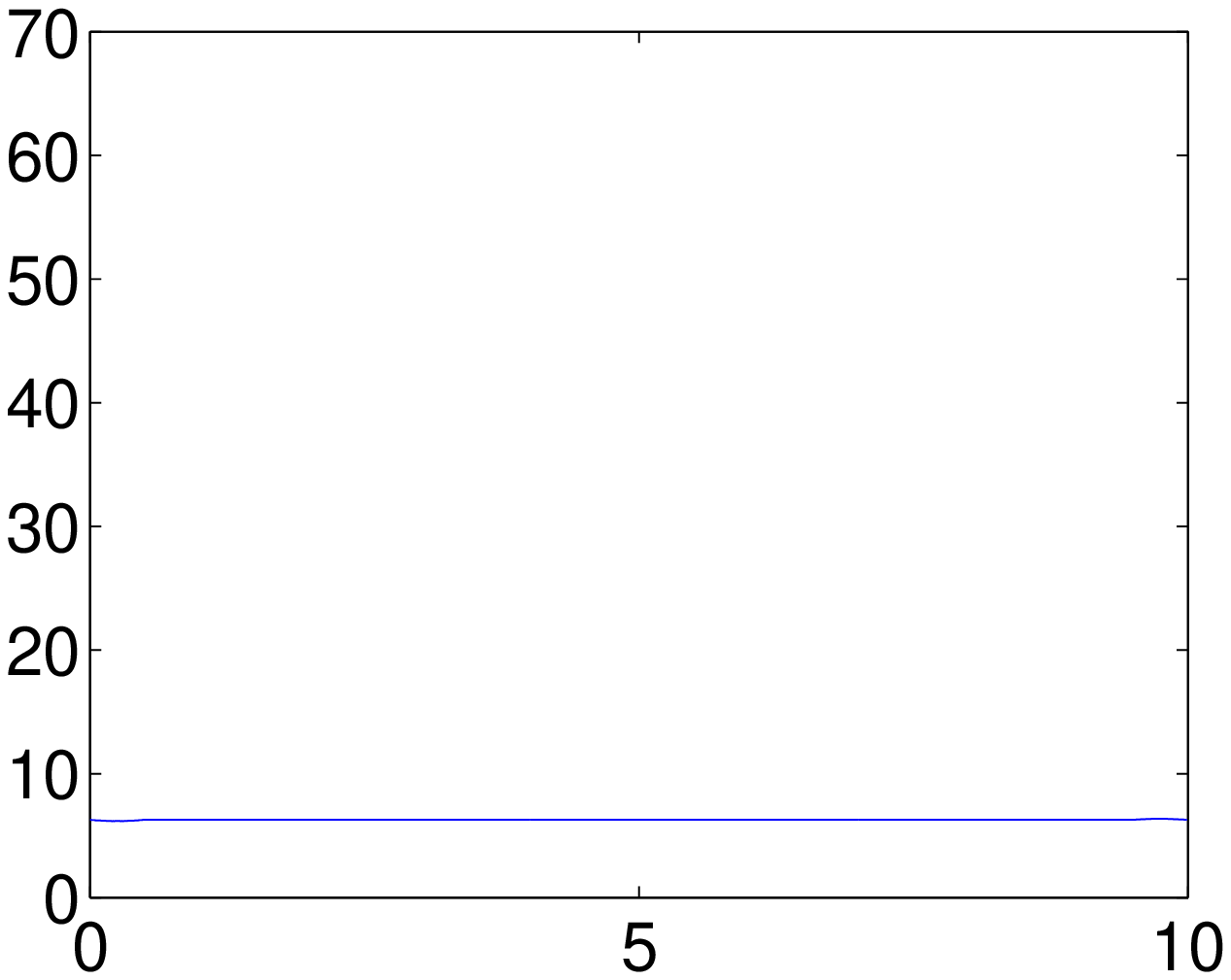}
                \caption{}
                \label{fig3-3}
        \end{subfigure}
        \caption{Test 2. $(a)$ The signal defined in  (\ref{signal_2}) with sudden changes in the amplitude. $(b)$ Instantaneous frequency computed using Hilbert transform $(c)$ Instantaneous frequency computed using the proposed method. }\label{fig3}
\end{figure}

\section{Numerical Experiments}\label{sec:Experiments}

In this section we test both the IF method and the proposed ALIF algorithm on artificial and real life signals. For both techniques we use the FP filter, given in Figure \ref{fig6-1}, convolved with itself. Such FP filter is based on the PDE model described in Section \ref{sec:filters}, where h(x), g(x) are the functions shown in Figure \ref{fig4} and the coefficients are $\alpha=0.005$, and $\beta=0.09$. We point out that FP Filters with different lengths are computed using the special interpolation method introduced in Section \ref{sec:filters}.

As observed previously, a good decomposition method should capture all the finest oscillations around a moving average. That means the IMFs should satisfy at least the following condition: all the local maximal values are positive and all the local minimal values are negative, as shown in Figure \ref{fig6-3}. It is important to note that using an iterative filtering method and tuning the stopping criterion described in (\ref{eq:SD}) we can get an IMF that looks like Figure \ref{fig6-2} to be like Figure \ref{fig6-3}. In the following experiments we generally set the threshold for SD to be around $10^{-5}$.

\begin{figure}
        \begin{subfigure}[b]{0.3\textwidth}
                        \centering
                        \includegraphics[width=\textwidth]{./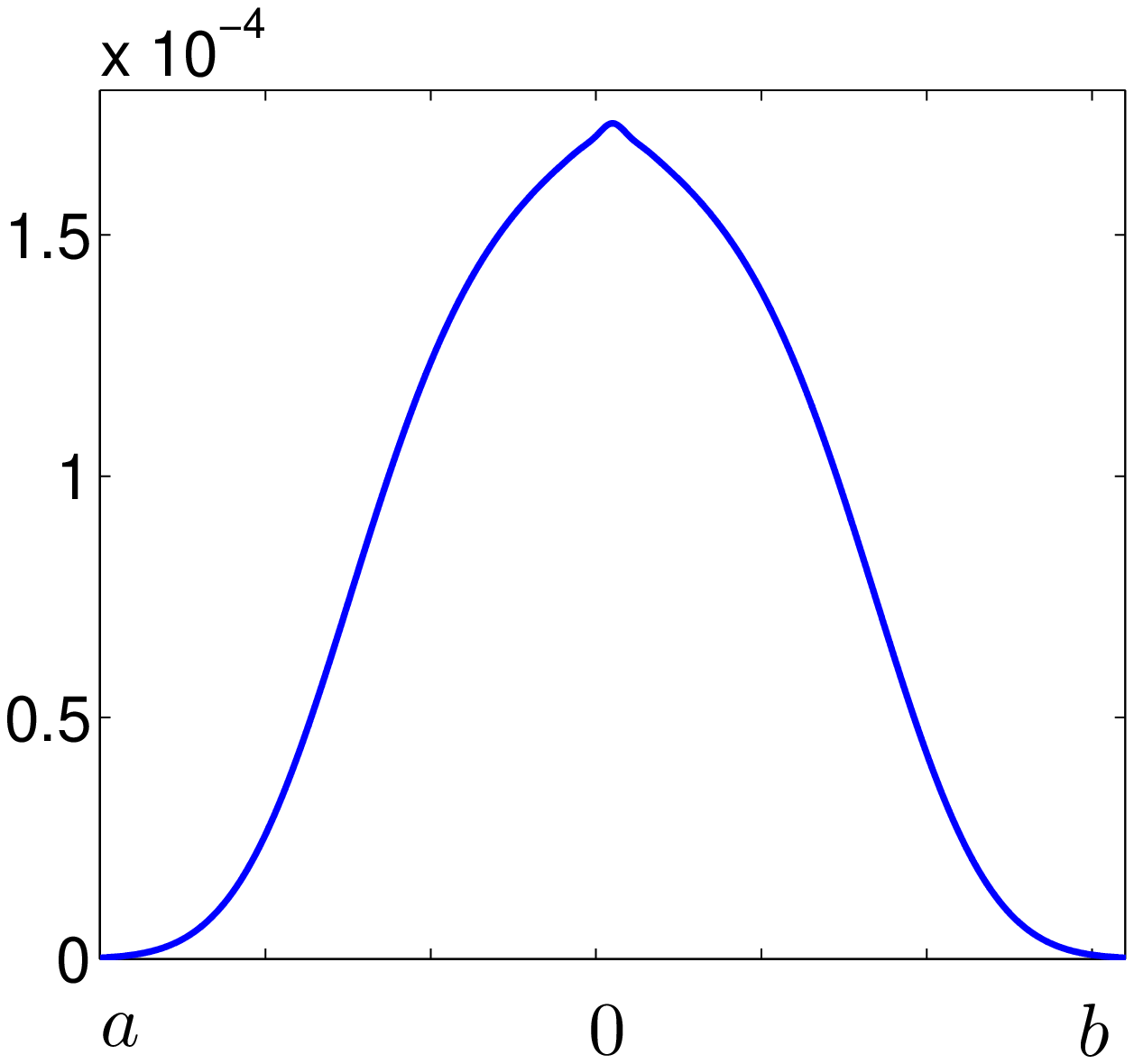}
                        \caption{}
                        \label{fig6-1}
                \end{subfigure}%
                ~ 
                \begin{subfigure}[b]{0.3\textwidth}
                        \centering
                        \includegraphics[width=\textwidth]{./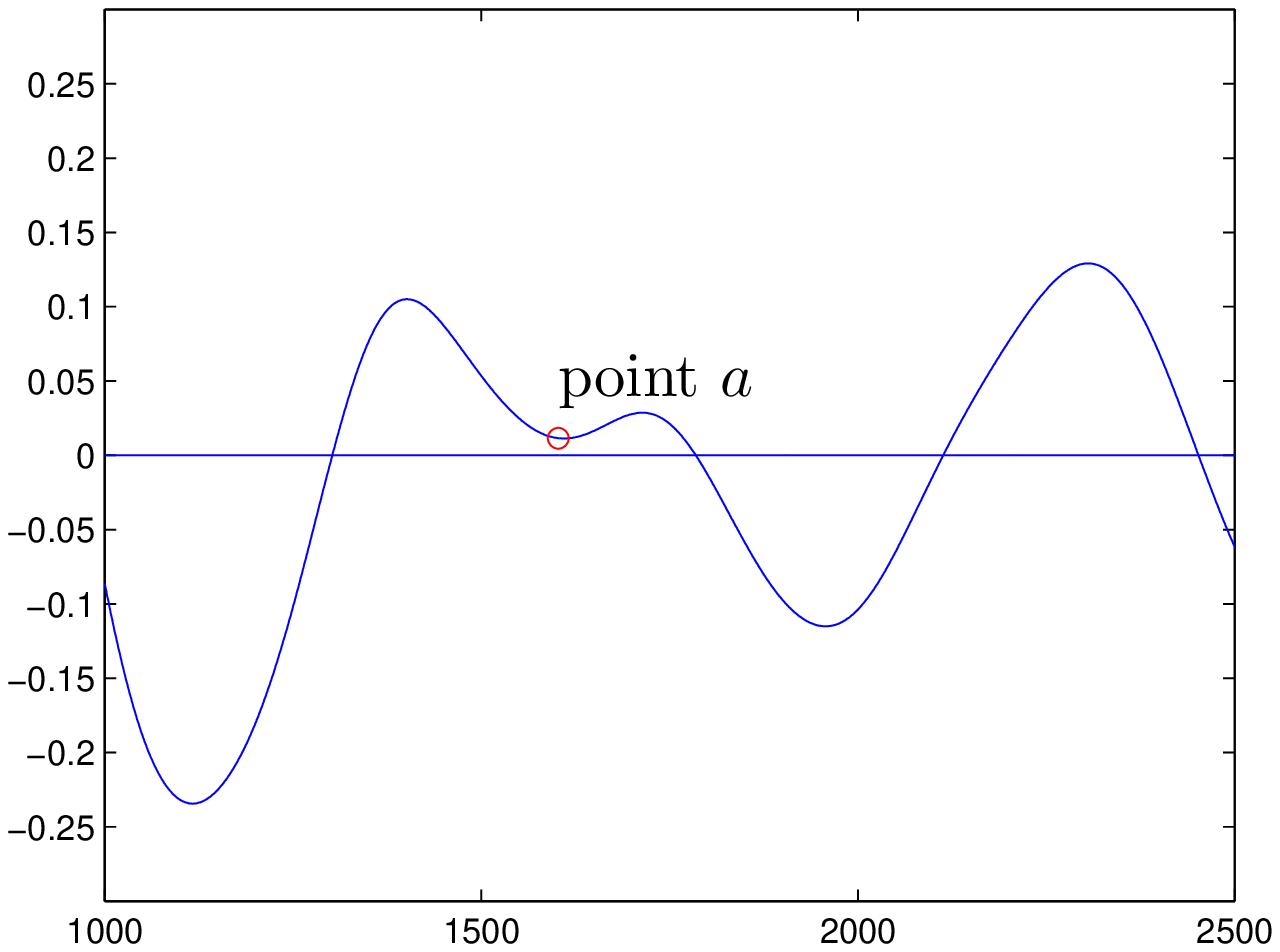}
                        \caption{}
                        \label{fig6-2}
                \end{subfigure}
                ~ 
                \begin{subfigure}[b]{0.3\textwidth}
                        \centering
                        \includegraphics[width=\textwidth]{./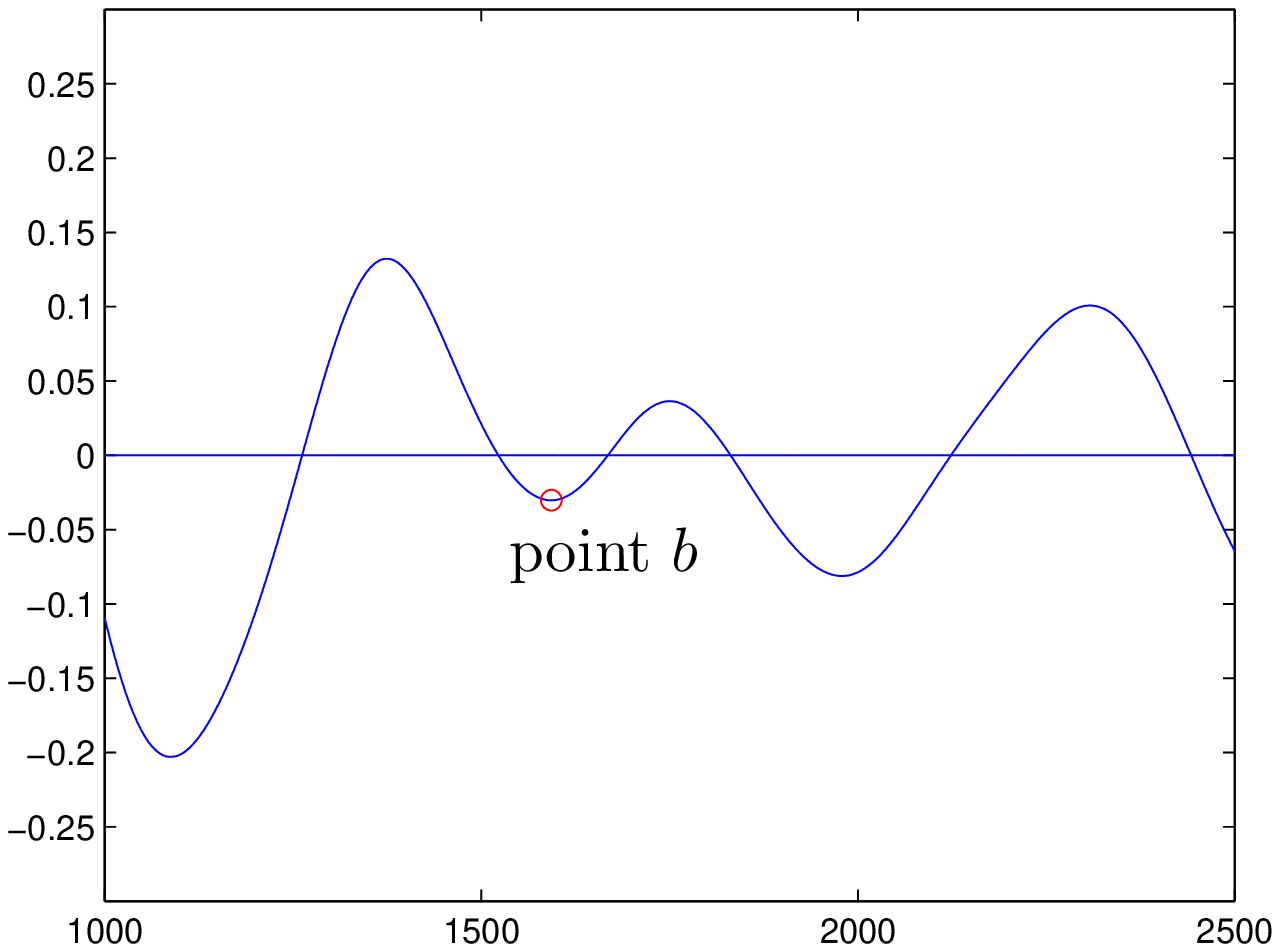}
                        \caption{}
                        \label{fig6-3}
                \end{subfigure}
                \caption{(a) The FP filter we use in the numerical implementations. (b) After $3$ iterations the local minimum point $a$ is above $0$. (c) After $5$ iterations the local minimum point $b$ is below $0$. Other parts do not change significantly from (\subref{fig6-2}) to (\subref{fig6-3}).}  \label{fig6}
\end{figure}

It is time now to test the methods on six artificial and three real--life data sets. We show that both IF and ALIF are stable under perturbations.

All the data sets as well as the Matlab codes used in this section are available at the first author webpage\footnote{\url{www.cicone.com}} or can be requested to the same author\footnote{\url{antonio.cicone@univaq.it}}.

\noindent \textbf{Example 1} We apply the IF algorithm on the non--stationary frequency modulated signal
 \begin{equation}\label{equ24}
 f(x)=4(x-0.5)^2+(2(x-0.5)^2+0.2)\sin{\left((20\pi +0.2\cos{(40\pi x)}) x\right)},\quad x\in[0,1]
 \end{equation}
 From Figure \ref{fig7}, we see that the IF method decompose $f(x)$ into two components. The first is the frequency modulated signal $(2(x-0.5)^2+0.2)\sin{\left((20\pi +0.2\cos{(40\pi x)}) x\right)}$ and the second is the so called trend $4(x-0.5)^2$.
\begin{figure}[H]
\begin{center}
        \begin{subfigure}[b]{0.48\textwidth}
                \centering
                \includegraphics[width=\textwidth]{./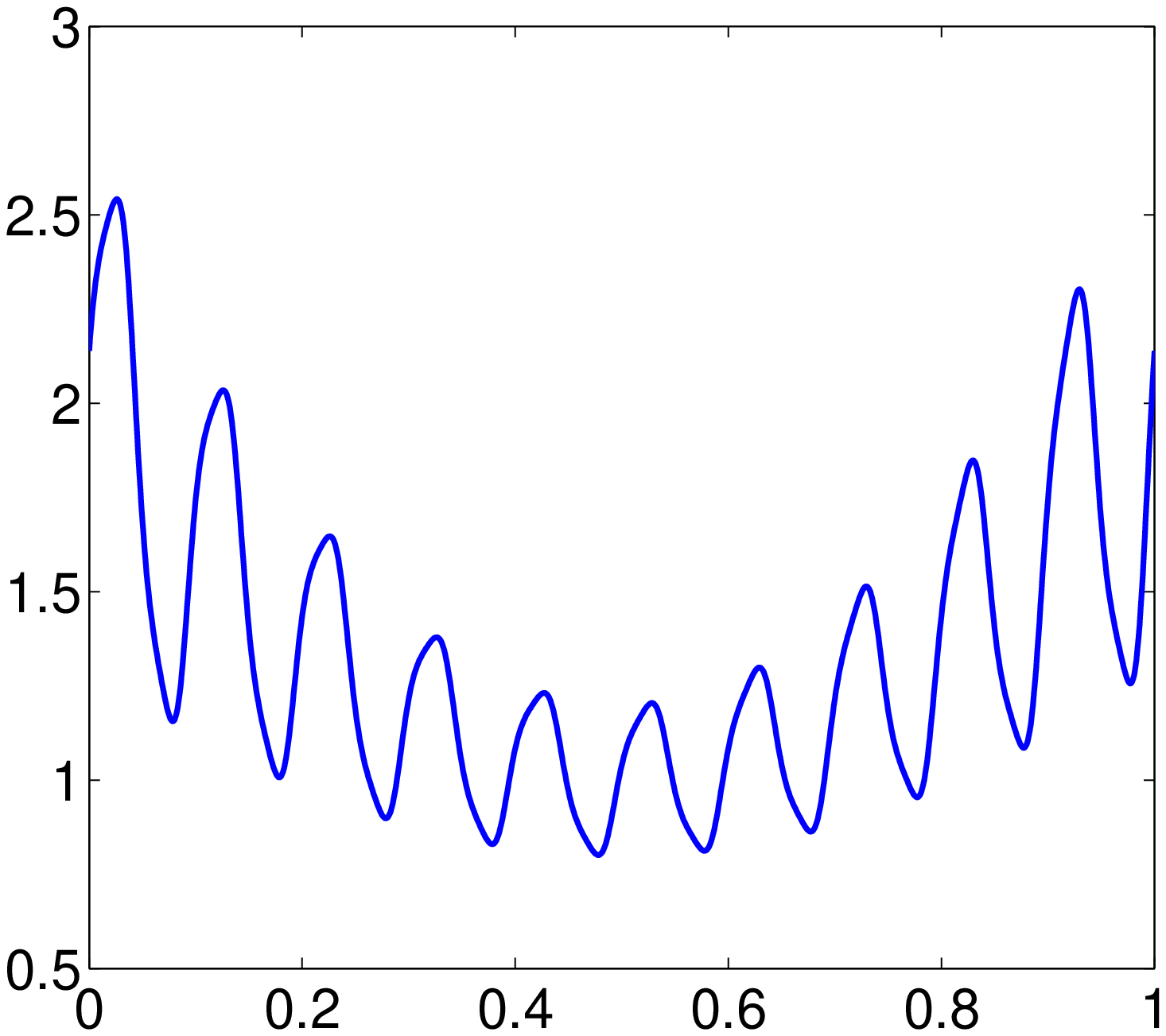}
                \caption{}
                \label{fig7-1}
        \end{subfigure}%
        ~ 
        \begin{subfigure}[b]{0.48\textwidth}
                \centering
                \includegraphics[width=\textwidth]{./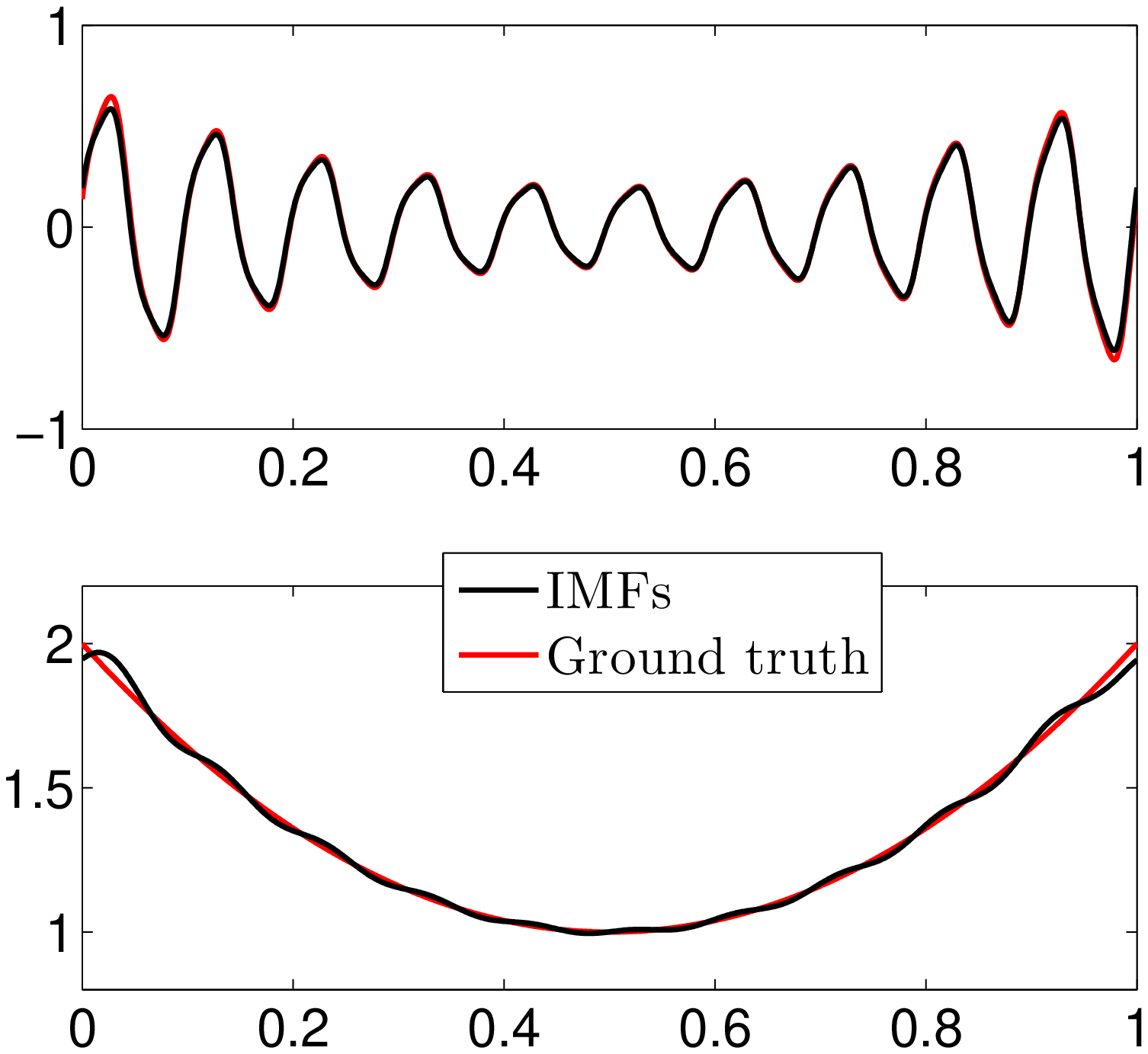}
                \caption{}
                \label{fig7-2}
        \end{subfigure}
\end{center}
        \caption{ (\subref{fig7-1}) The signal given in (\ref{equ24}). (\subref{fig7-2}) The two components in the IF decomposition.}\label{fig7}
\end{figure}

\noindent \textbf{Example 2} We test the IF technique on the highly non--stationary signal
 \begin{equation}\label{equ26}
 f(x)=\sin{(4\pi x)}+0.5\cos{(50\pi|x|-40\pi x^2)},\quad  x\in[-0.4,0.4]
 \end{equation}
After the decomposition, we compute the instantaneous frequency of each component by the method proposed in Section \ref{sec:InstFreq}.
As shown in Figures \ref{fig9} and \ref{fig9bis}, $f(x)$ is separated into two IMFs which reproduce with good accuracy the ground truth components, ref. Figure \ref{fig9-4}. One has a varying instantaneous frequency and the other has a constant instantaneous frequency, ref. Figure \ref{fig9-3}.

 \begin{figure}[H]
        \begin{subfigure}[b]{0.48\textwidth}
                \centering
                \includegraphics[width=\textwidth]{./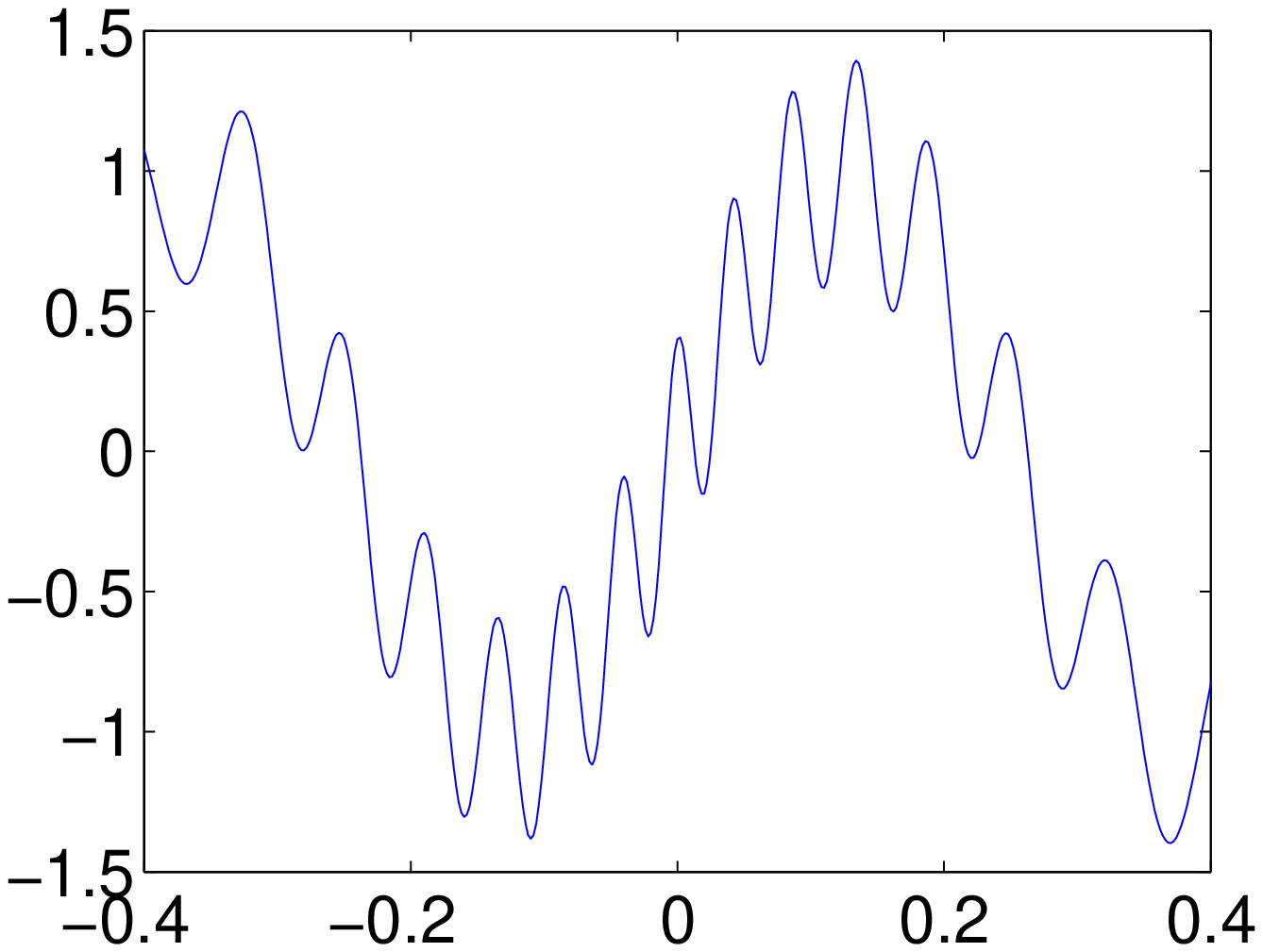}
                \caption{}
                \label{fig9-1}
        \end{subfigure}
        ~\begin{subfigure}[b]{0.48\textwidth}
                \centering
                \includegraphics[width=\textwidth]{./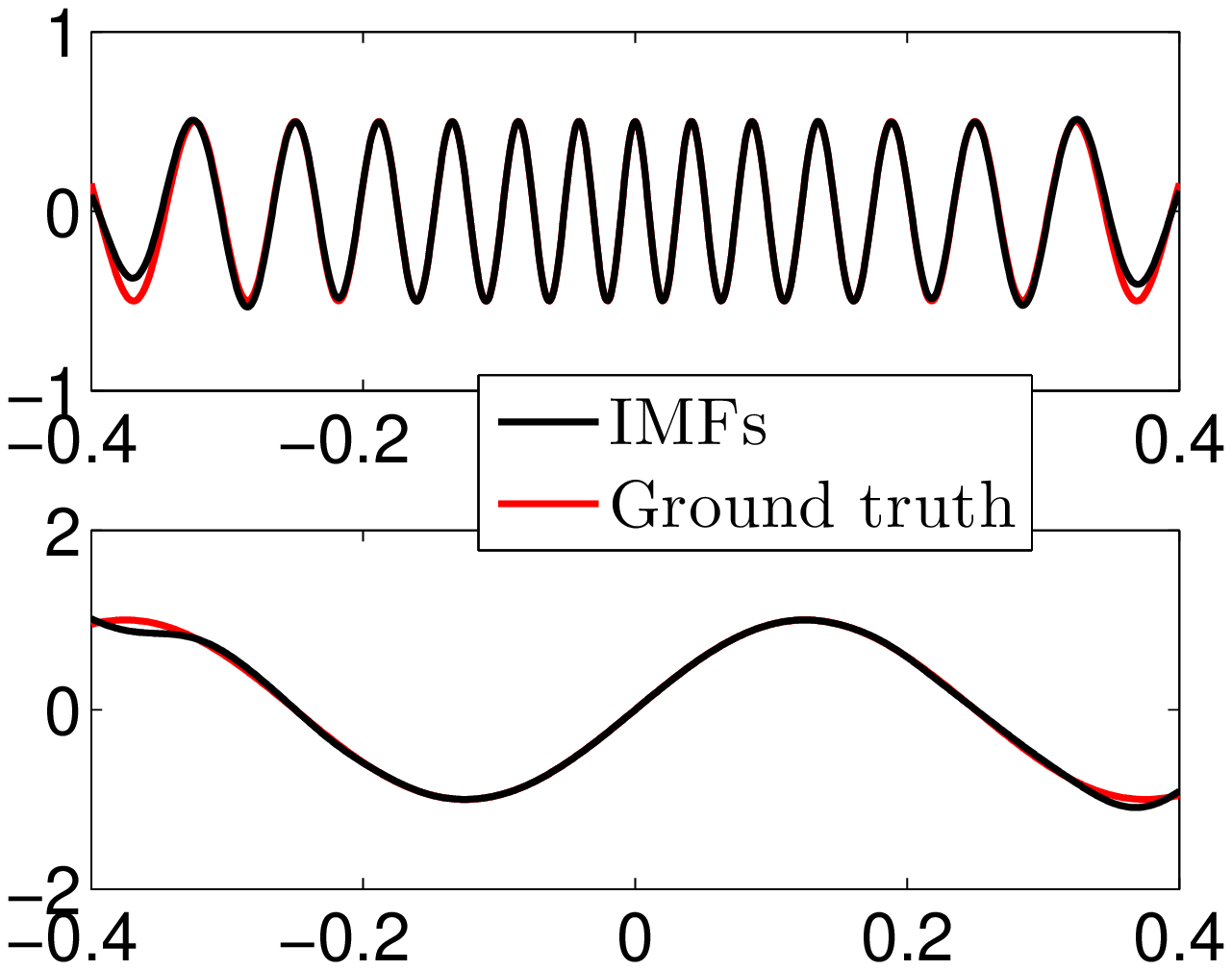}
                \caption{}
                \label{fig9-2}
        \end{subfigure}
        \caption{(\subref{fig9-1}) The signal given in (\ref{equ26}). (\subref{fig9-2}) The components obtained from the IF decomposition.}\label{fig9}
\end{figure}

\begin{figure}[H]
        \begin{subfigure}[b]{0.48\textwidth}
                \centering
                \includegraphics[width=\textwidth]{./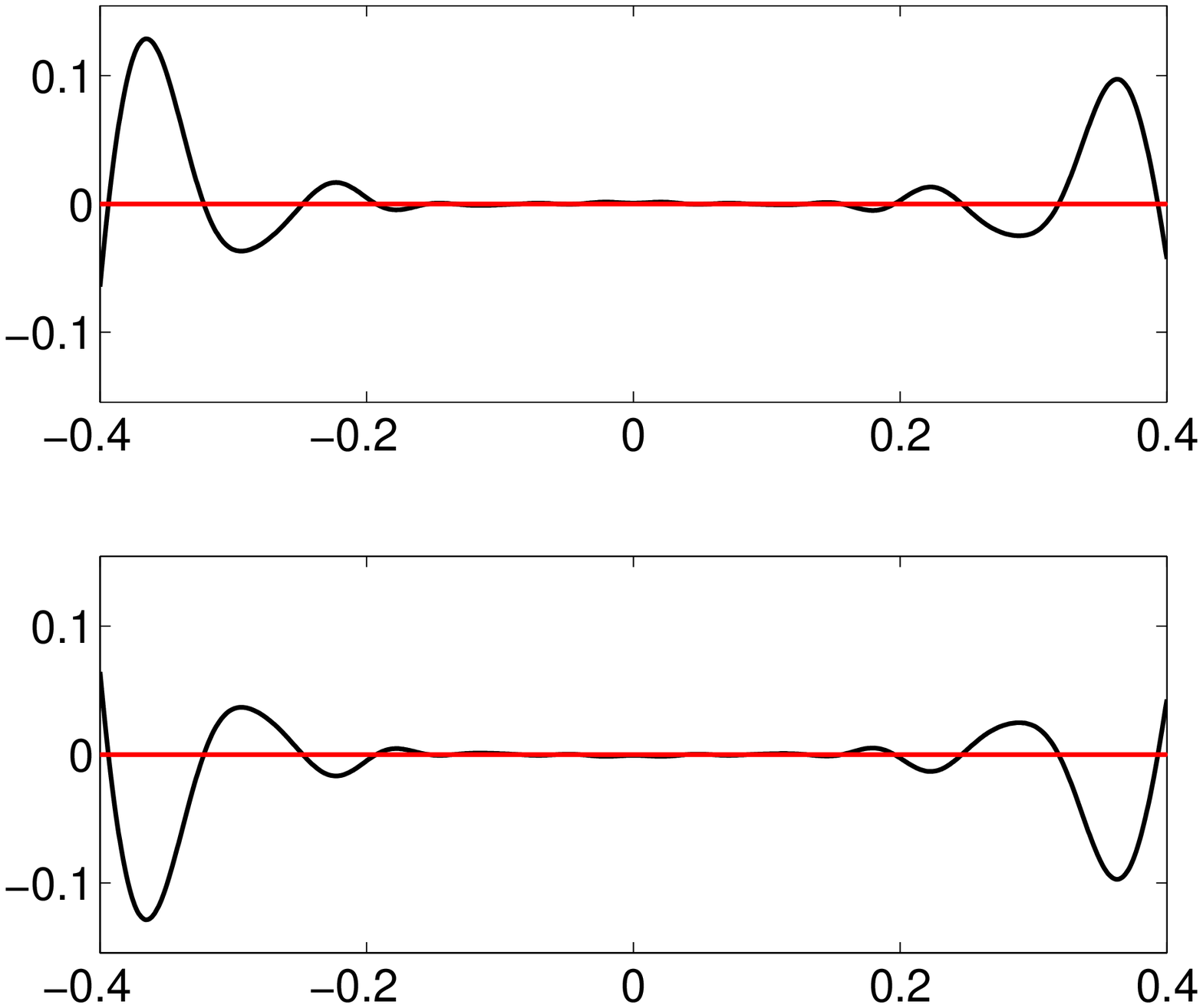}
                \caption{}
                \label{fig9-4}
        \end{subfigure}~
        \begin{subfigure}[b]{0.48\textwidth}
                \includegraphics[width=\textwidth]{./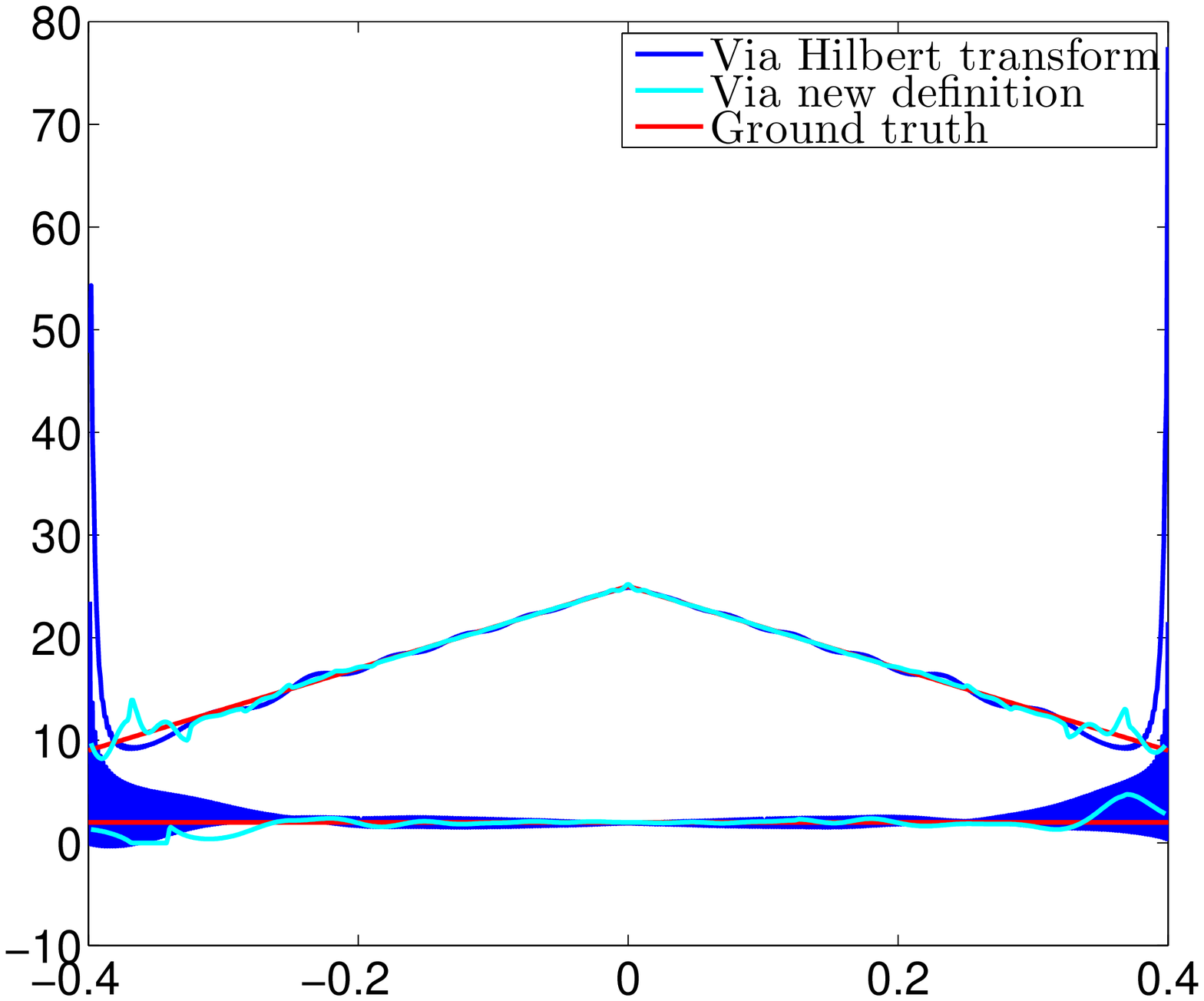}
                \caption{}
                \label{fig9-3}
        \end{subfigure}
        \caption{(\subref{fig9-4}) Differences between IMFs and the ground truth components. (\subref{fig9-3}) The instantaneous frequency for the two components depicted in Figure \ref{fig9-2}.  }\label{fig9bis}
\end{figure}

\noindent \textbf{Example 3}\label{ex3} This time we study the highly non--stationary signal shown in Figure \ref{fig30b}
obtained by adding together the following non-stationary signals $f_1(x)$ and $f_2(x)$, plotted in Figure \ref{fig30a}, and then shifted upward of one.

 \begin{equation}\label{eq:doubleHat}
 \begin{array}{ccc}
   f_1(x) & = & \cos{(-\frac{8}{\pi} x^2-20|x|)} \\
   f_2(x) & = & \cos{(-\frac{8}{\pi} x^2-4|x|)}\\
    f(x) & = & f_1(x) + f_2(x)+1,\quad  x\in[0,2\pi]
 \end{array}
 \end{equation}

  \begin{figure}[H]
        \begin{subfigure}[b]{0.5\textwidth}
                \centering
                \includegraphics[width=\textwidth]{./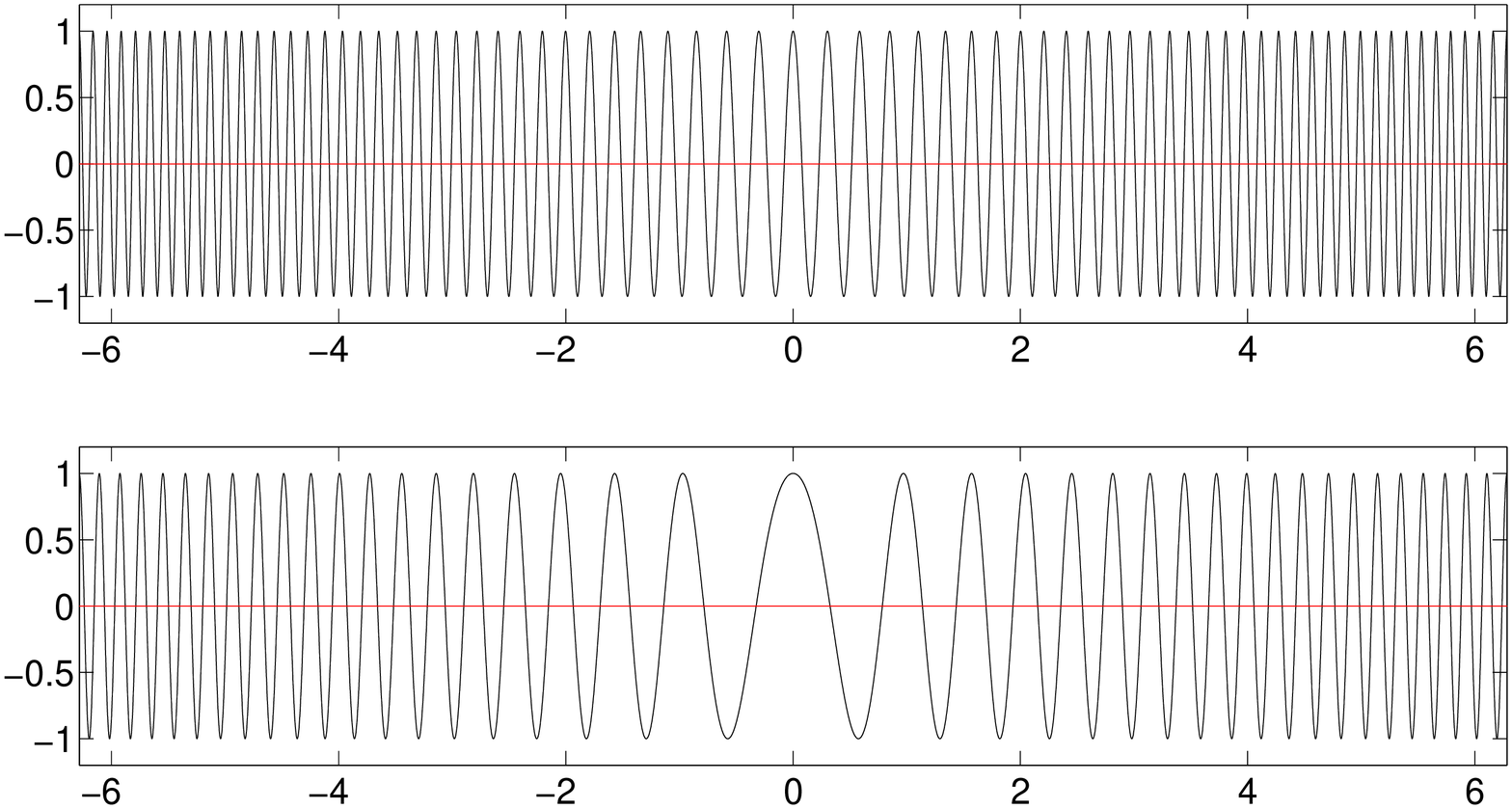}
                \caption{}
                \label{fig30a}
        \end{subfigure}%
        ~ 
          \begin{subfigure}[b]{0.5\textwidth}
                \centering
                \includegraphics[width=\textwidth]{./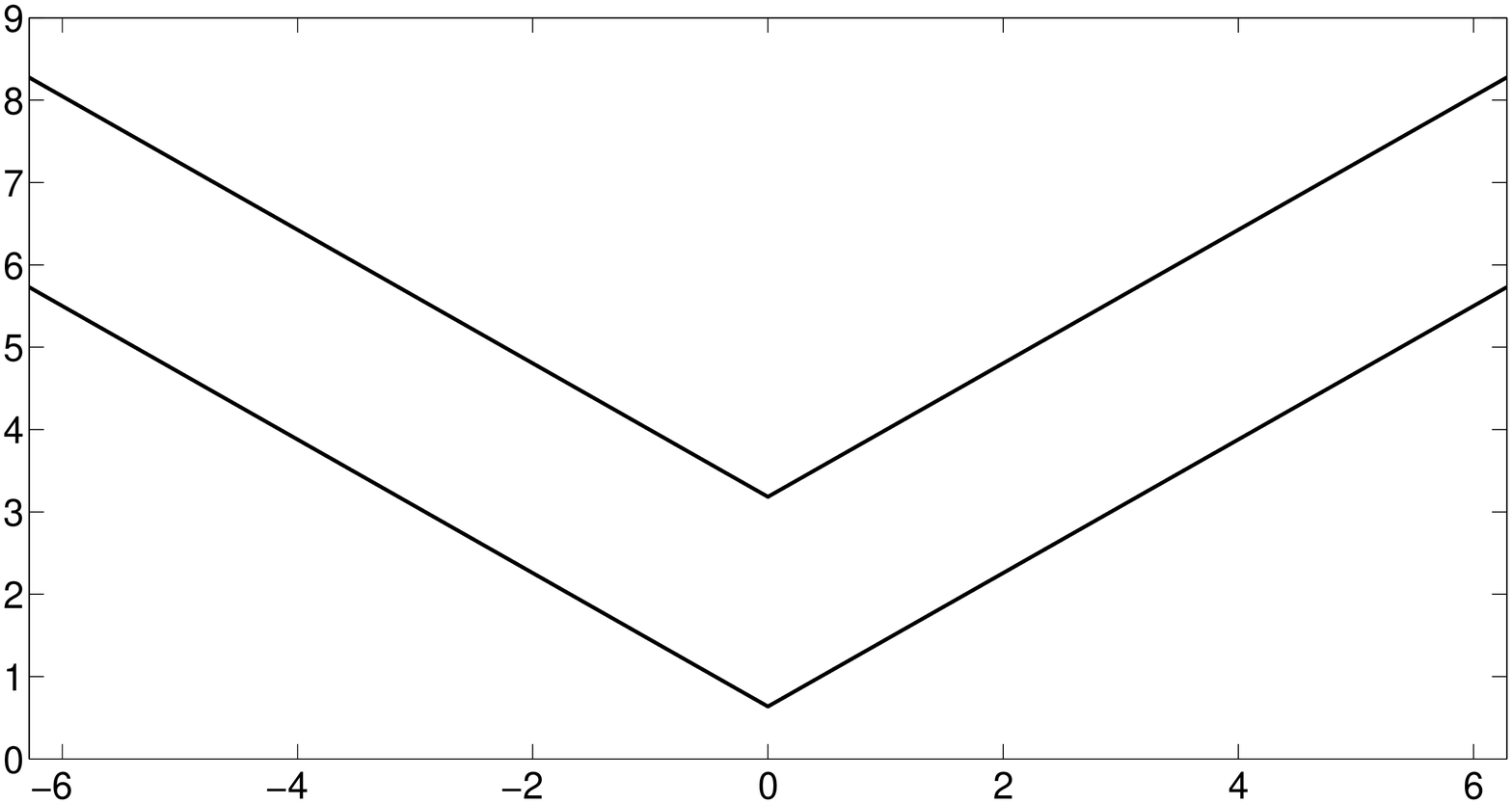}
                \caption{}
                \label{fig31b}
        \end{subfigure}
        \caption{ (\subref{fig30a}) Components $f_1(x)$ and $f_2(x)$. (\subref{fig31b}) Theoretical instantaneous frequencies of $f_1(x)$ and $f_2(x)$.}\label{fig30}
\end{figure}

The two components $f_1(x)$ and $f_2(x)$ are ideal IMFs for the signal $f(x)$. However if we apply the IF technique to decompose the signal what we get is far away from the expected IMFs. In fact, no matter what low pass filter we select and the mask length we choose, the IMFs we get from IF are going to look like something similar to what shown in Figure \ref{fig31a}.

\begin{figure}[H]
\begin{subfigure}[b]{0.5\textwidth}
                \centering
                \includegraphics[width=\textwidth]{./Signal_double_hat.eps}
                \caption{}
                \label{fig30b}
        \end{subfigure}
        ~
        \begin{subfigure}[b]{0.5\textwidth}
                \centering
                \includegraphics[width=\textwidth]{./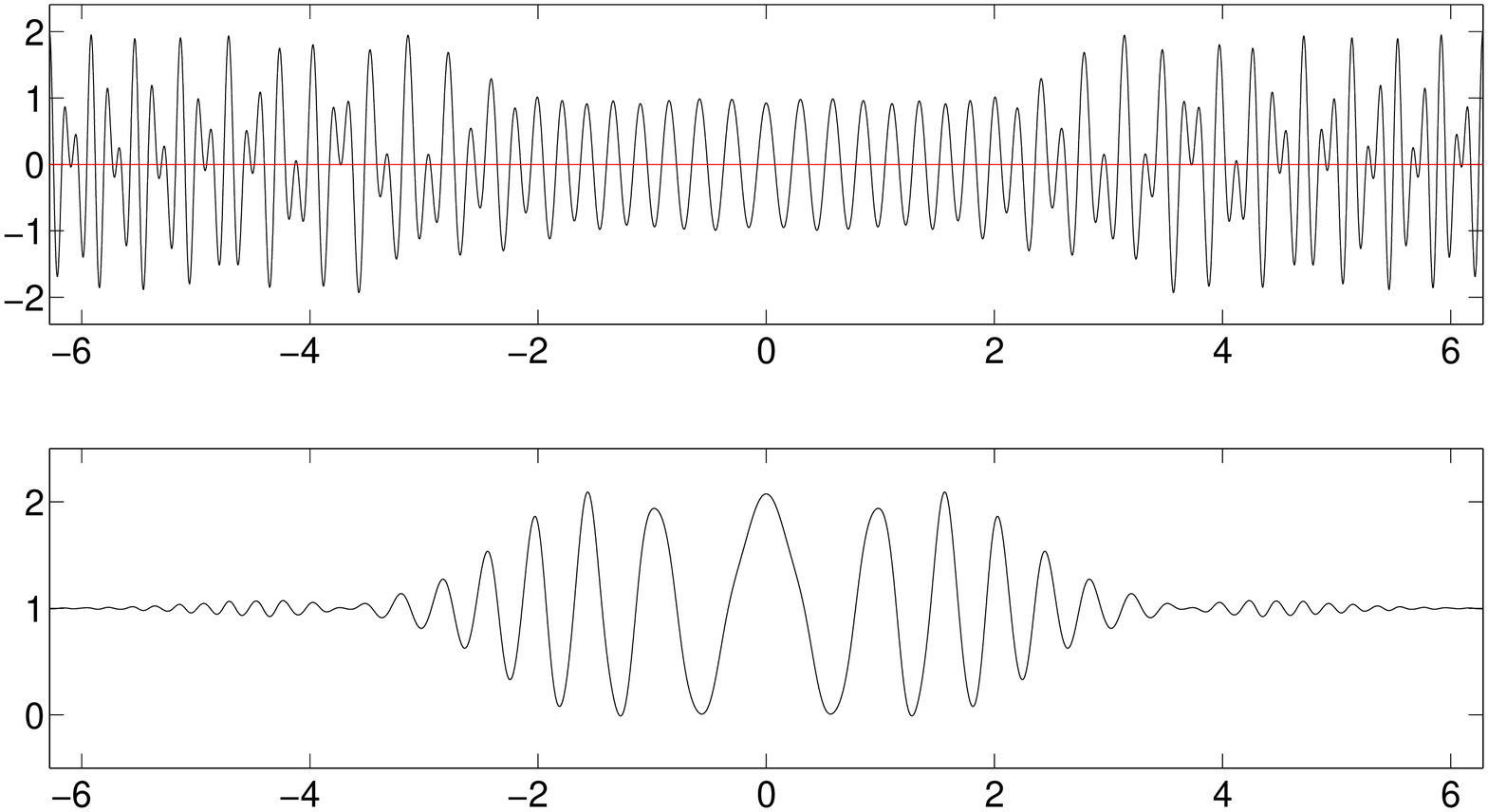}
                \caption{}
                \label{fig31a}
        \end{subfigure}
        \caption{(\subref{fig30b}) Signal $f(x)$ given in (\ref{eq:doubleHat}). (\subref{fig31a}) IF decomposition.}\label{fig31}
\end{figure}

To explain this behavior we take advantage of the fact that the components of the signal $f(x)$ are known a priori, so we can analyze them. Following Section \ref{sec:InstFreq}, we compute the instantaneous frequencies of $f_1$ and $f_2$. The result is plotted in Figure \ref{fig31b}.
We observe that these instantaneous frequencies are well separated at each instant of time, however their ranges overlap. This is the very reason why the IF method fails to decompose this signal: IF can only separate components whose instantaneous frequencies have well separated ranges.
In fact in IF, to produce a single IMF, we use a fixed mask length. Hence the algorithm can produce only an IMF that contains all the components of a signal whose instantaneous frequencies are inside a well defined interval, regardless if these components were originated by different signals or a single one.

One way to decompose the signal $f(x)$ to produce the components $f_1$ and $f_2$ is to use a technique that allows to adaptively change the mask length from point to point. This is done by the ALIF method.

Using the mask length plotted in Figure \ref{fig32b}, which has been derived from the signal itself as described in Section \ref{sec:ALIF}, the ALIF algorithm produces the decomposition shown in Figure \ref{fig32a}. It is important to point out here that to produce such a decomposition the ALIF technique does not make use of any a priori knowledge on the given signal.

\begin{figure}[H]
\begin{subfigure}[b]{0.5\textwidth}
                \centering
                \includegraphics[width=\textwidth]{./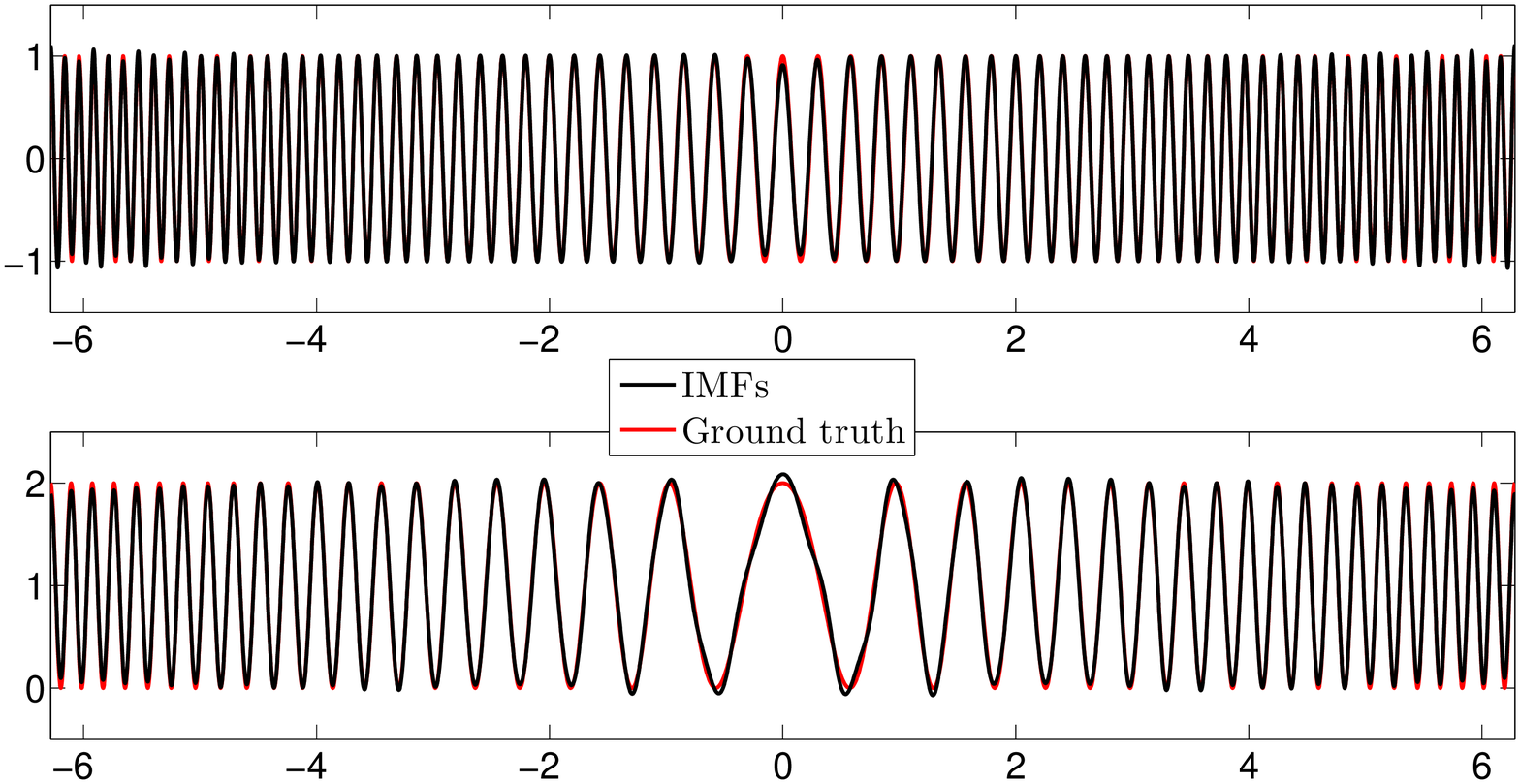}
                \caption{}
                \label{fig32a}
        \end{subfigure}
        ~
        \begin{subfigure}[b]{0.5\textwidth}
                \centering
                \includegraphics[width=\textwidth]{./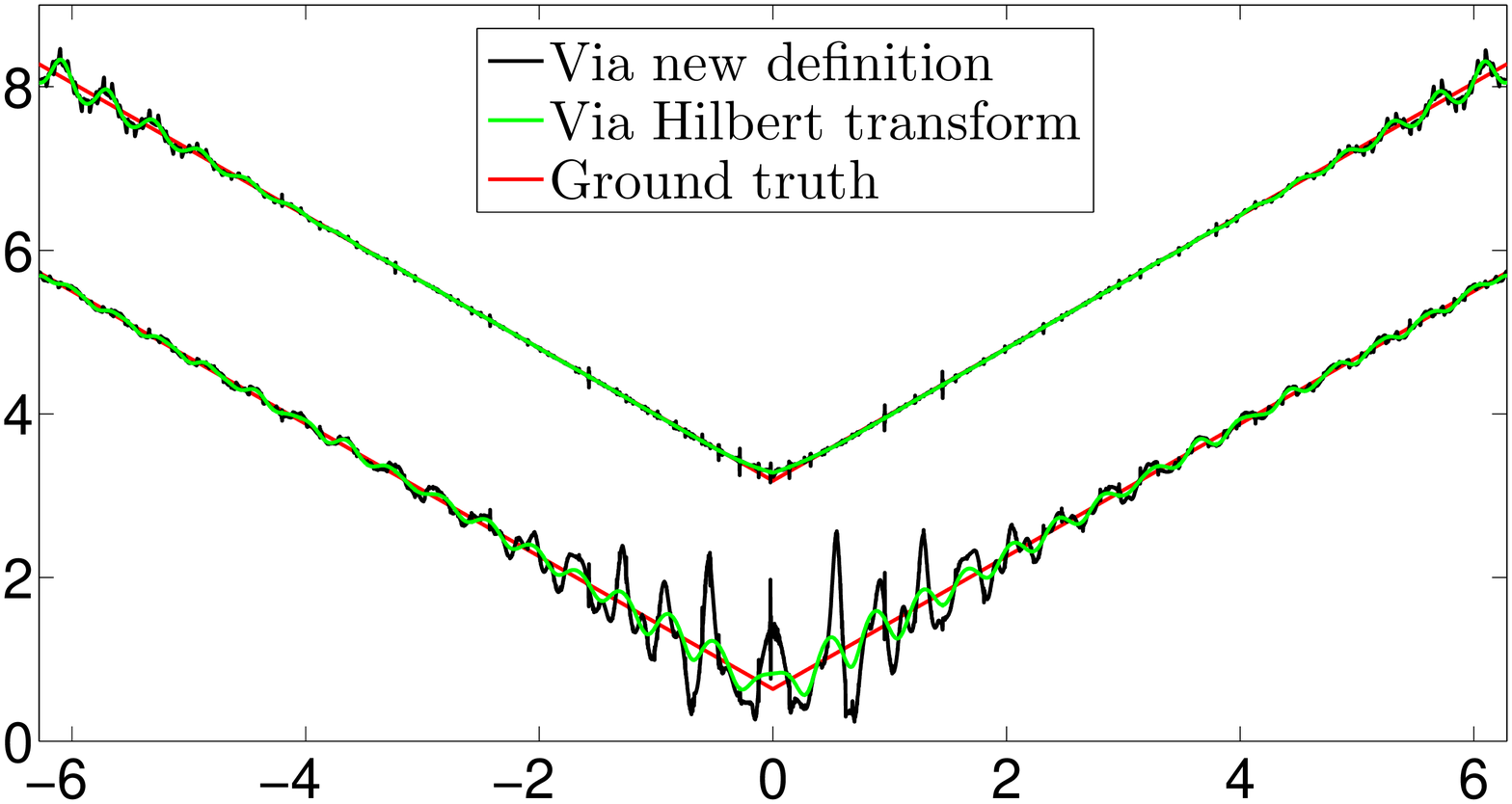}
                \caption{}
                \label{fig32c}
        \end{subfigure}
        \caption{(\subref{fig32a}) ALIF decomposition. (\subref{fig32c}) Instantaneous frequencies of the IMFs.}
\end{figure}

This example allows to show the ability of ALIF with respect to IF method in decomposing properly a non--stationary signal simply based on the local information from the signal itself.

Regarding the instantaneous frequency computation, in Figure \ref{fig32c} we compare the values computed using the standard technique, based on Hilbert Transform \cite{huang1998empirical}, the instantaneous frequency obtained using the new definition proposed in Section \ref{sec:InstFreq} and the known ground truth. To compute the instantaneous frequency of the second component we first subtract from it its average value 1.

Looking at this picture we observe that, for the second IMF, the new method is producing a more oscillatory instantaneous frequency than the one computed using the traditional definition. This is because such signal has intra--wave modulation, which is due to the presence of concurrent oscillations with different frequencies \cite{huang1999new}. This is evident if we look at Figure \ref{fig32a} where the IMFs are compared with the ground truth. 
In fact, looking at the central section of the bottom curves, the black one, that represents the remainder, crosses several times the red one, which depicts the ground truth.

Thanks to the scaling of the signal and its derivative, ref. \eqref{equ10} and \eqref{equ11}, the new definition of instantaneous frequency proves to be more sensitive than the traditional one to intra--wave modulation allowing to identify time intervals where this phenomenon happens.

For some applications such sensitiveness proves to be extremely valuable, for instance in the decomposition of the solution of the Duffing equation as well as other classical nonlinear systems \cite{huang1998empirical}.

In some other cases, like the example under study, such intra--wave modulations are simply related to small amplitude oscillations due to the numerical calculations. In this case we may want to reduce the contribution of such small amplitude oscillations. One way is, by means of the ALIF technique itself, to filter out such small amplitude oscillations from the signal and its derivative, as shown in Figure~\ref{fig:DoubleHatNewFreq}.

\begin{figure}[H]
\centering
                \includegraphics[width=0.5\textwidth]{./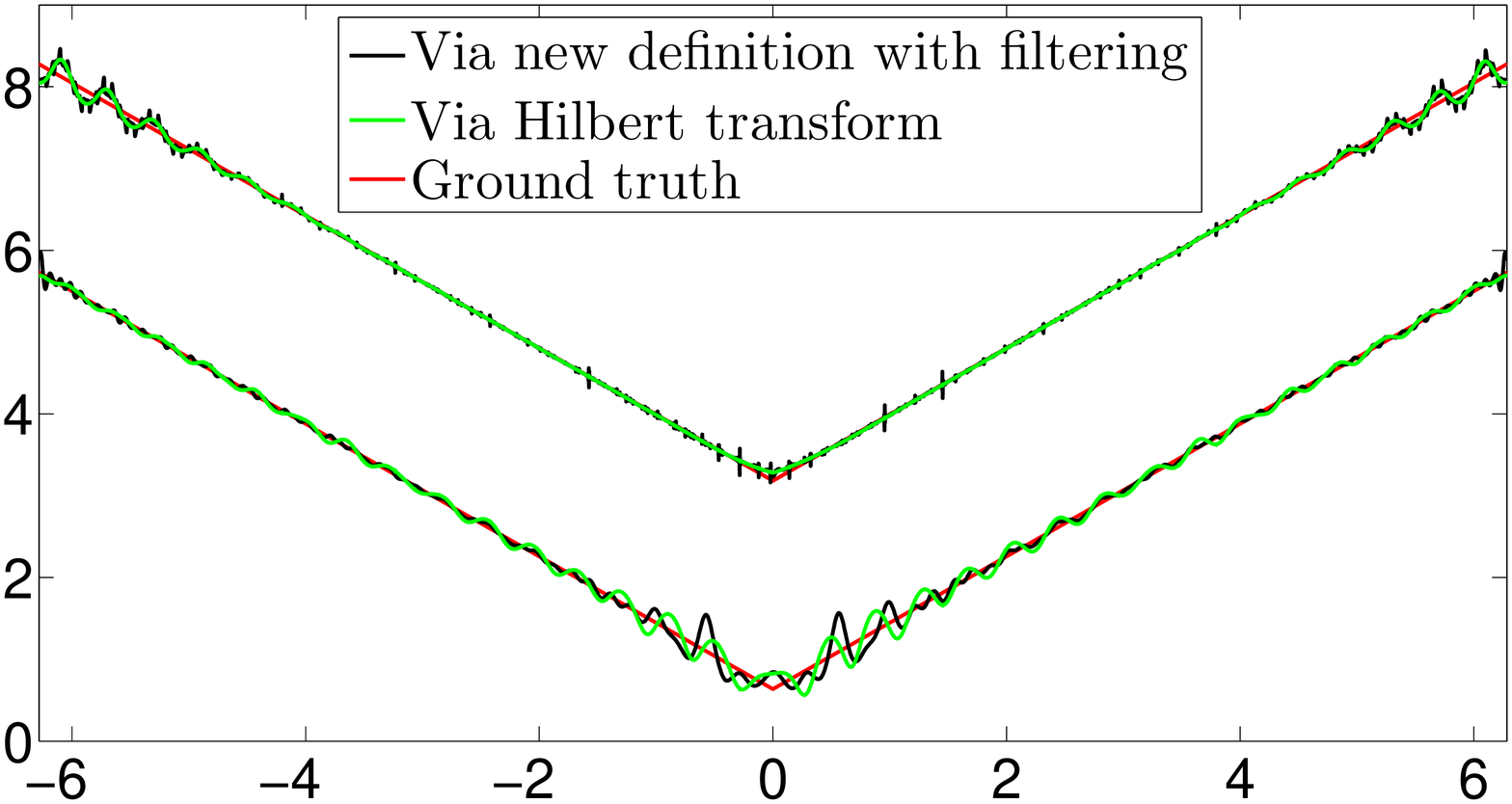}
                \caption{Instantaneous frequencies of the IMFs computed using the new definition coupled with the ALIF technique to remove small scale oscillations.}
                \label{fig:DoubleHatNewFreq}
\end{figure}

\noindent \textbf{Example 4} We consider now a signal and its perturbation with white noise to show the stability of the IF algorithm and to introduce the identifiability problem \cite{Wu2014Ident}. The signals are
\begin{equation}\label{equ25_0}
f_0(x)=\sin{(\pi x)}+\sin{(4\pi x)},\phantom{+n_2(x)}\qquad  x\in[0,5]
\end{equation}
\begin{equation}\label{equ25}
 f_i(x)=\sin{(\pi x)}+\sin{(4\pi x)}+n_i(x),\qquad x\in[0,5],\quad i=1,\, 2
\end{equation}
 where $n_1(x)$  and $n_2(x)$ are white noise:  $n_1(x) \sim  N(0,0.01)$  and $n_2(x) \sim  N(0,1)$ for every $x\in\R$. We apply the IF algorithm on  $f_0(x)$, $f_1(x)$ and $f_2(x)$.
From Figure \ref{fig8}, we see that $f_0(x)$ is separated into two IMFs, which correspond to the components $\sin{(4\pi x)}$ and $\sin{(\pi x)}$ respectively.
$f_1(x)$ is decomposed into seven IMFs as shown in Figure \ref{fig88}. The first few IMFs come from the impact of the noise and the last two IMFs reveal the two sinusoidal functions $\sin{(4\pi x)}$ and $\sin{(\pi x)}$. $f_2(x)$ is decomposed into nine IMFs and only the last seven are shown in Figure \ref{fig888}. As for $f_1(x)$, the first few IMFs come from the impact of the noise and the last two IMFs reveal the two sinusoidal functions $\sin{(4\pi x)}$ and $\sin{(\pi x)}$.
 Via this example, the IF algorithm is shown to be robust to white noise. What is more important, denoising is automatically achieved by getting rid of the first few highly oscillatory IMFs.\\
 \begin{figure}[H]
 \begin{center}
        \begin{subfigure}[b]{0.3\textwidth}
                \centering
                \includegraphics[width=\textwidth]{./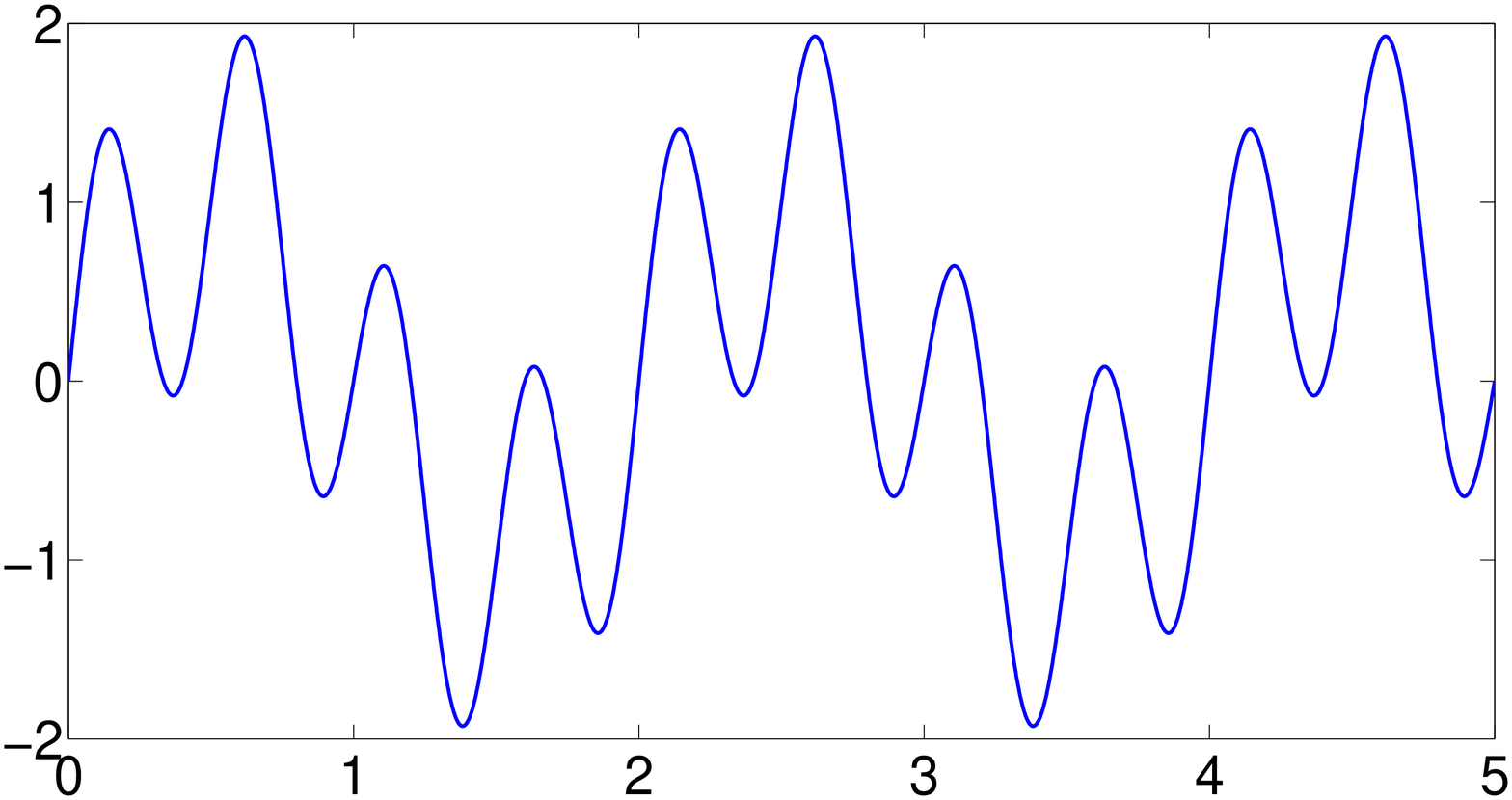}
                \caption{}
                \label{fig8-1}
        \end{subfigure}%
        ~
        \begin{subfigure}[b]{0.3\textwidth}
                \centering
                \includegraphics[width=\textwidth]{./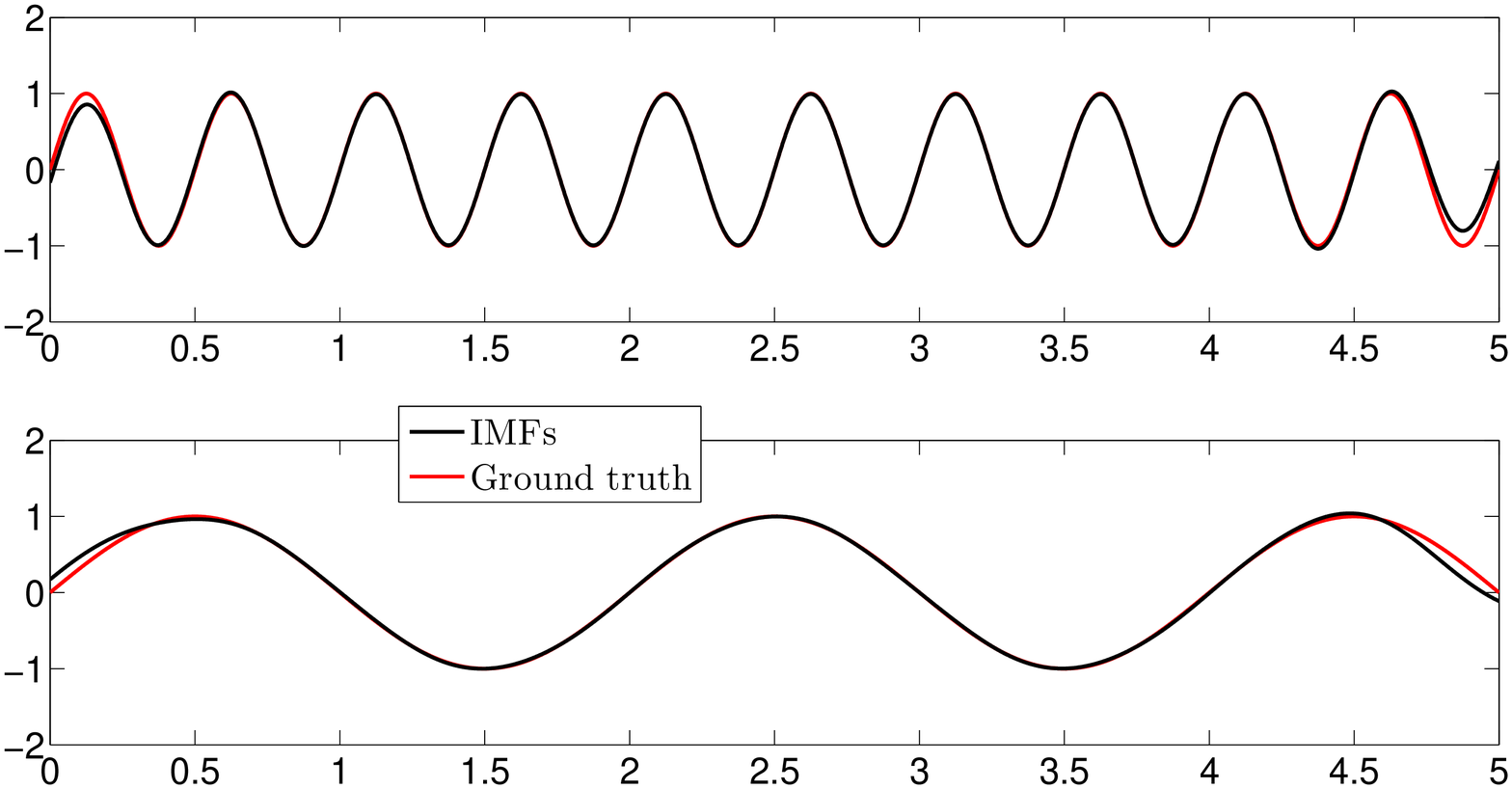}
                \caption{}
                \label{fig8-2}
        \end{subfigure}
        ~
        \begin{subfigure}[b]{0.3\textwidth}
                \centering
                \includegraphics[width=\textwidth]{./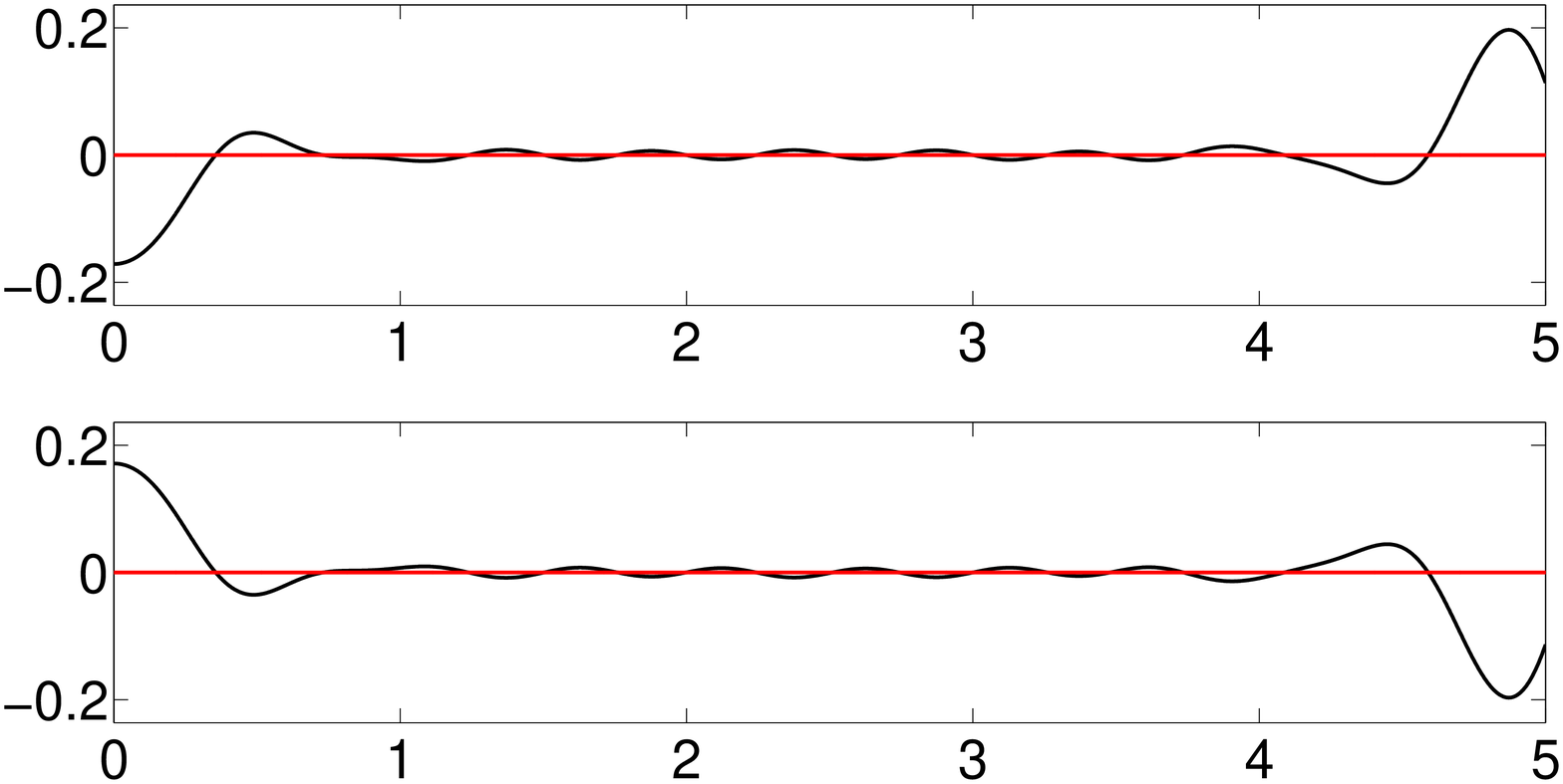}
                \caption{}
                \label{fig8-3}
        \end{subfigure}
\end{center}
        \caption{ (\subref{fig8-1}) The signal $f_0(x)$ given in (\ref{equ25_0}). (\subref{fig8-2}) The components in the IF decomposition. (\subref{fig8-3}) Error values. }\label{fig8}
\end{figure}

 \begin{figure}[H]
 \begin{center}
        \begin{subfigure}[b]{0.3\textwidth}
                \centering
                \includegraphics[width=\textwidth]{./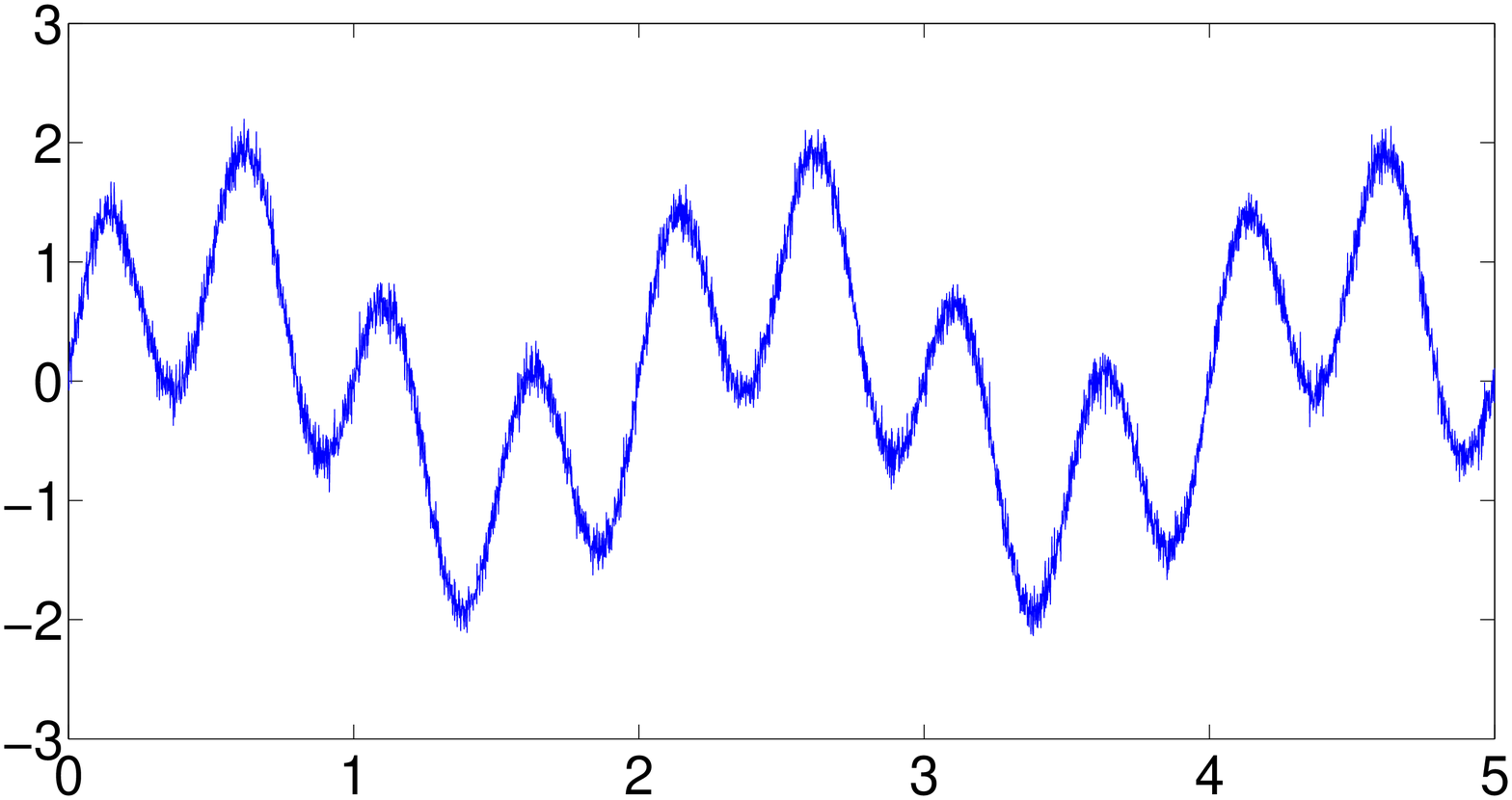}
                \caption{}
                \label{fig88-1}
        \end{subfigure}%
        ~
        \begin{subfigure}[b]{0.3\textwidth}
                \centering
                \includegraphics[width=\textwidth]{./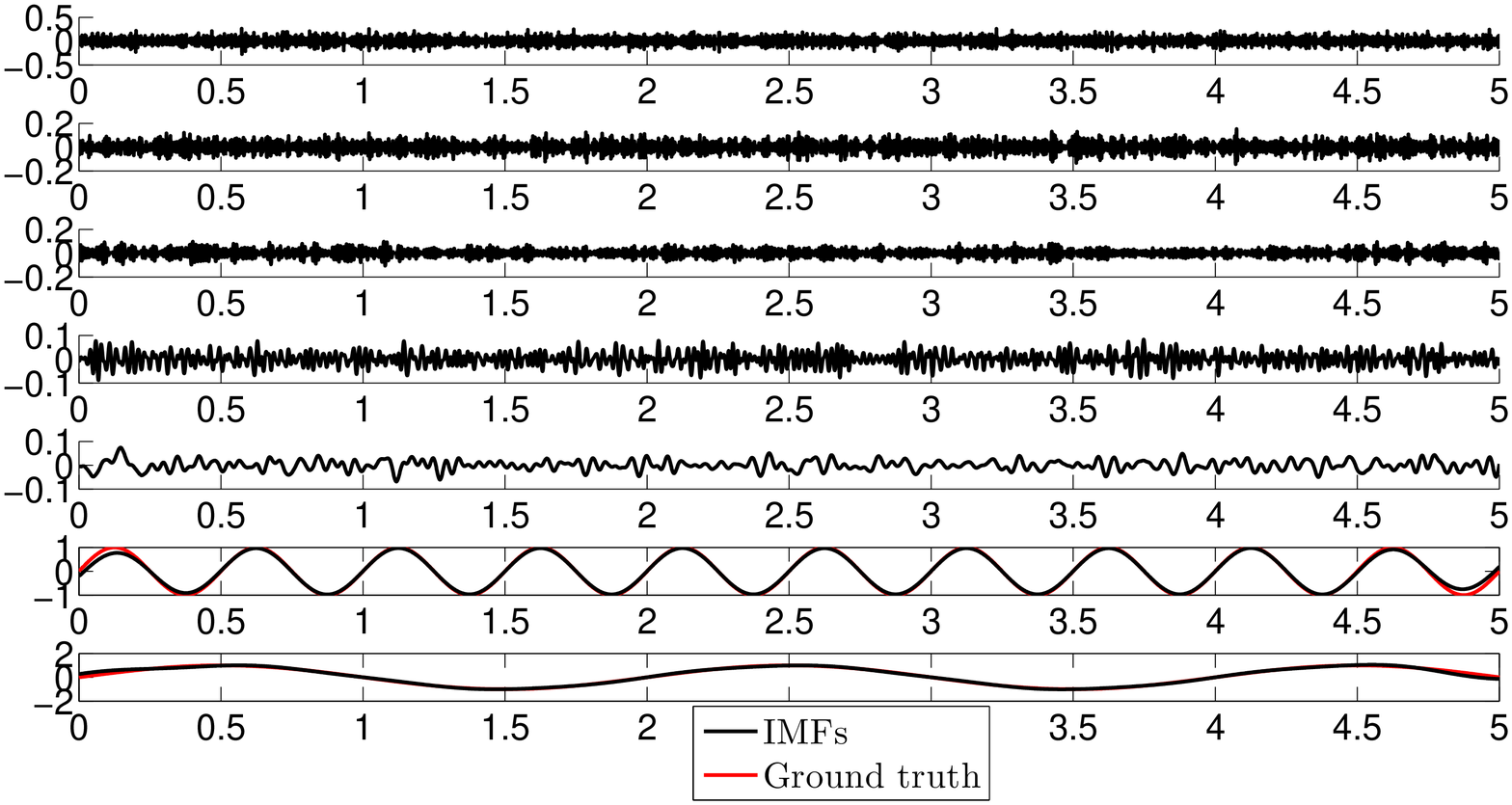}
                \caption{}
                \label{fig88-2}
        \end{subfigure}
        ~
        \begin{subfigure}[b]{0.3\textwidth}
                \centering
                \includegraphics[width=\textwidth]{./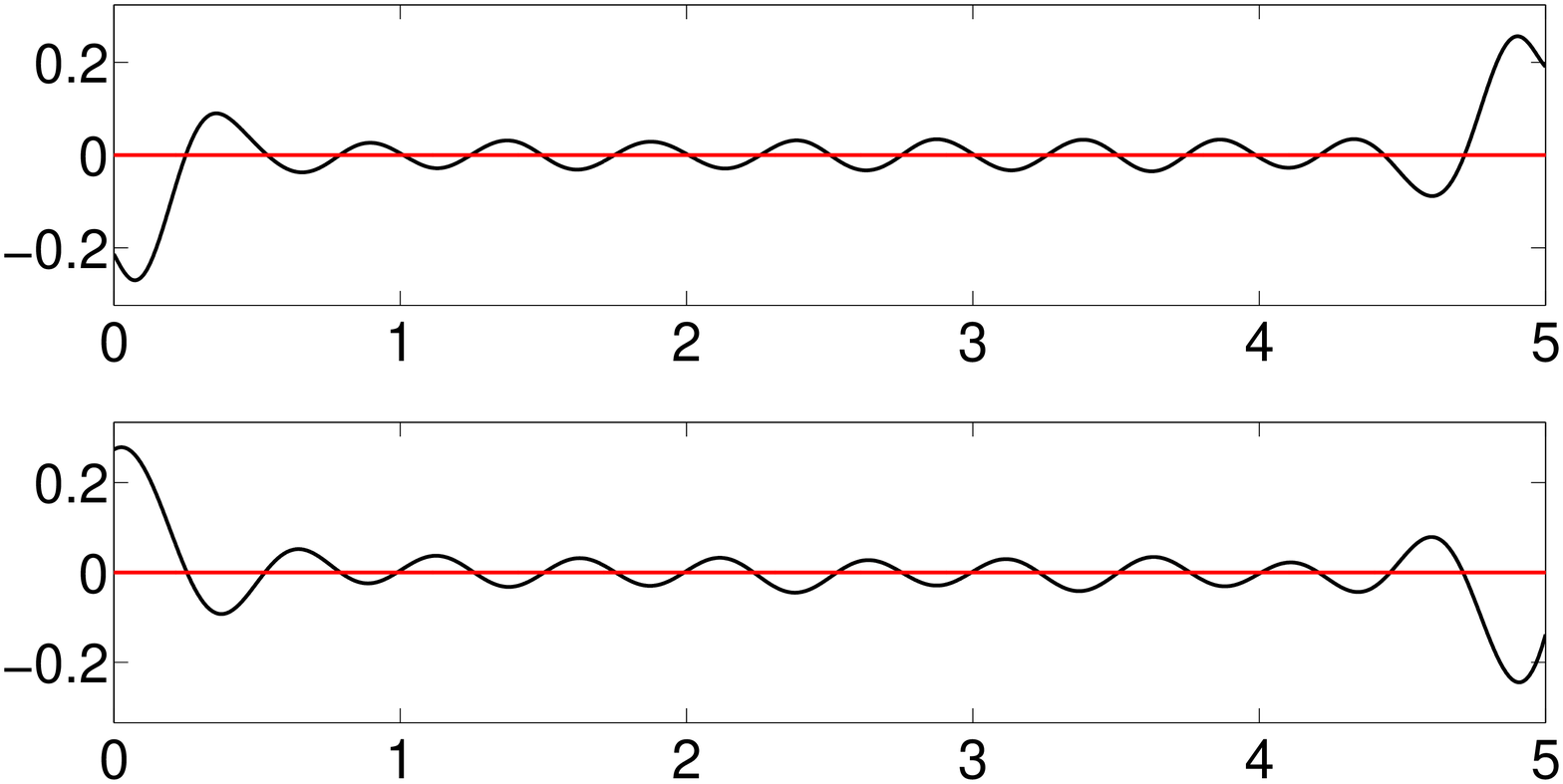}
                \caption{}
                \label{fig88-3}
        \end{subfigure}
\end{center}
        \caption{ (\subref{fig88-1}) The signal $f_1(x)$ given in (\ref{equ25}). (\subref{fig88-2})  The IMFs produced using IF. (\subref{fig88-3})  Error values computed as differences between the ground truth components and the last two IMFs in the decomposition. }\label{fig88}
\end{figure}

 \begin{figure}[H]
 \begin{center}
        \begin{subfigure}[b]{0.3\textwidth}
                \centering
                \includegraphics[width=\textwidth]{./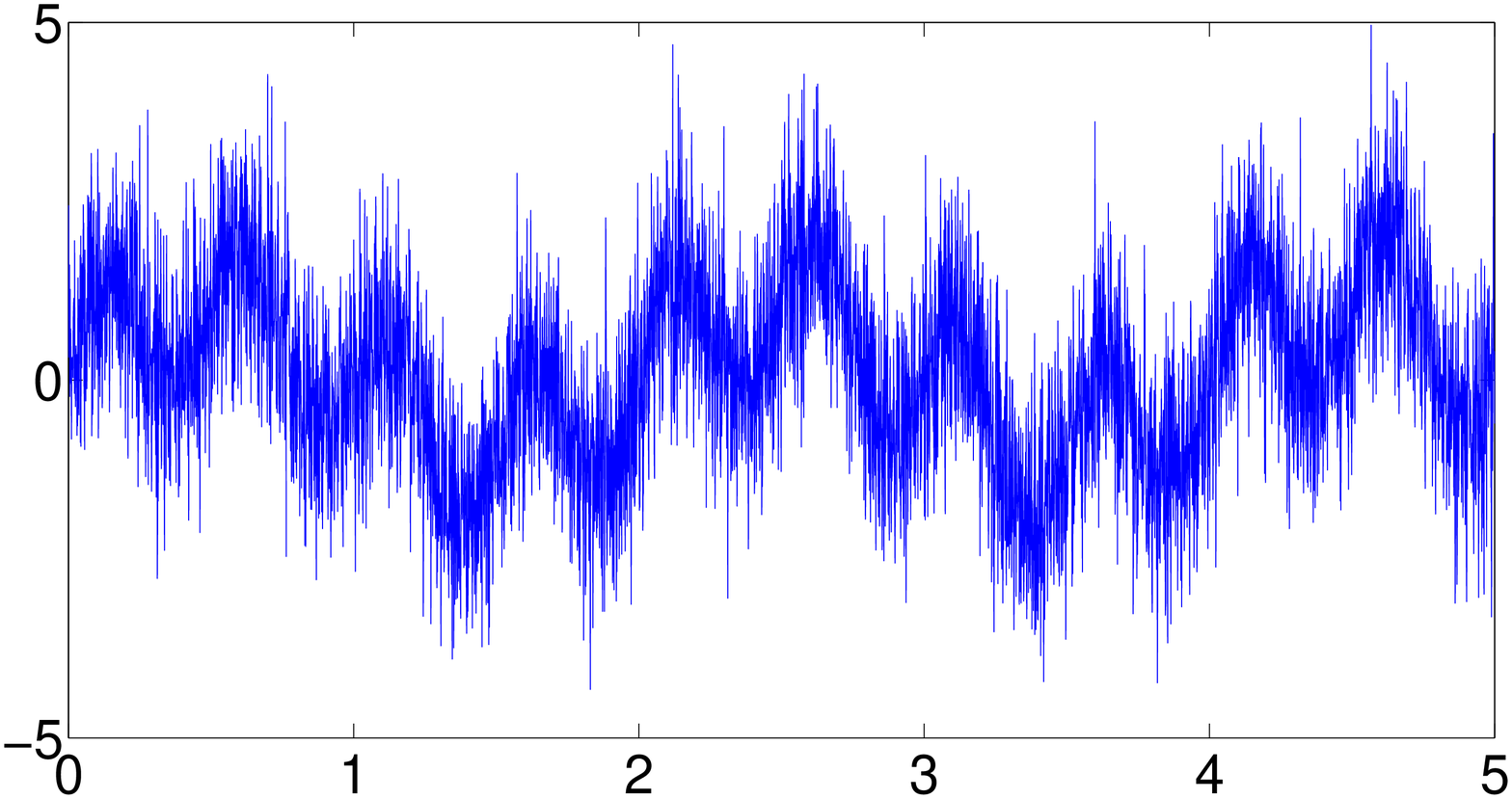}
                \caption{}
                \label{fig888-1}
        \end{subfigure}%
        ~
        \begin{subfigure}[b]{0.3\textwidth}
                \centering
                \includegraphics[width=\textwidth]{./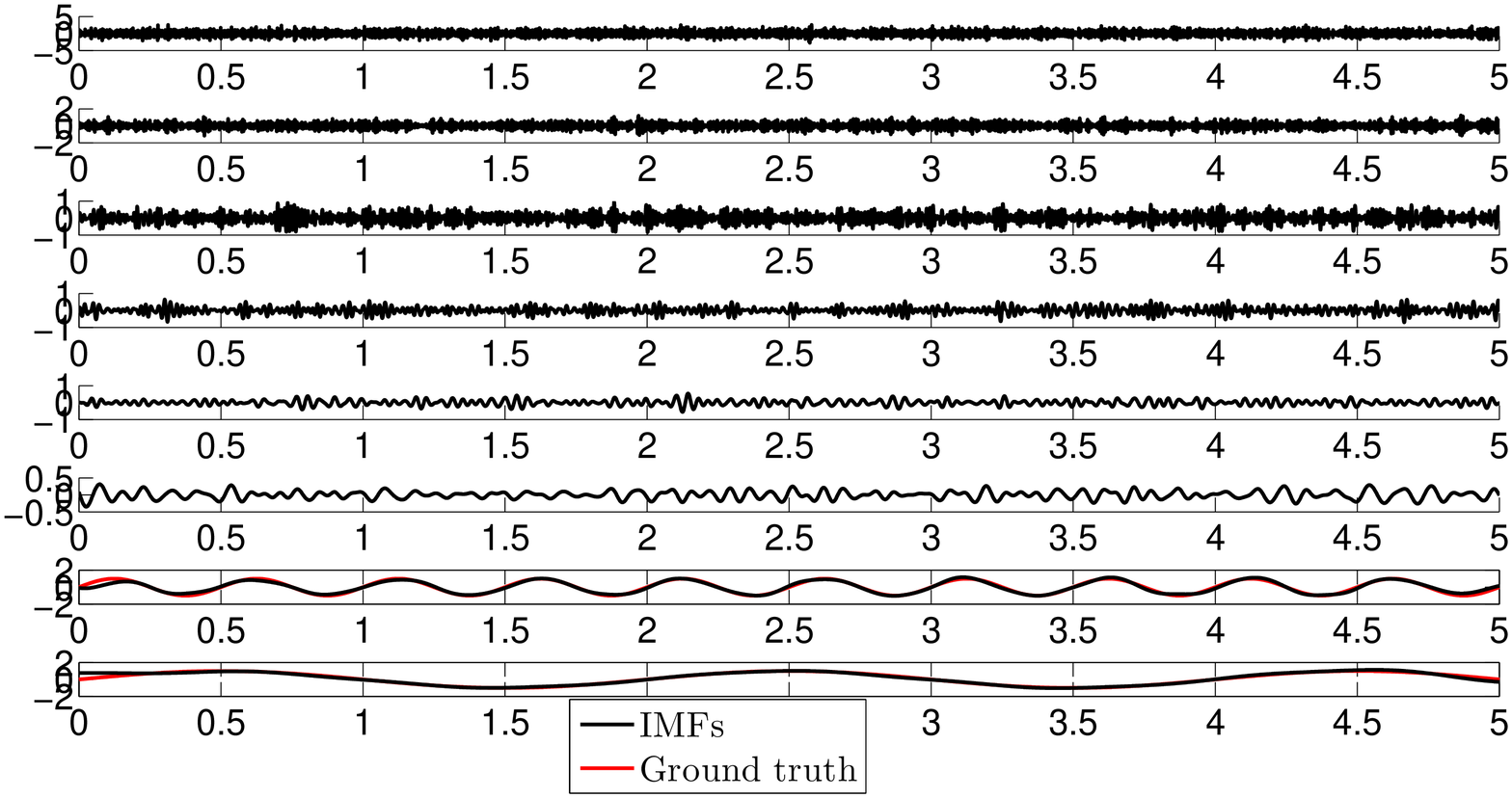}
                \caption{}
                \label{fig888-2}
        \end{subfigure}
        ~
        \begin{subfigure}[b]{0.3\textwidth}
                \centering
                \includegraphics[width=\textwidth]{./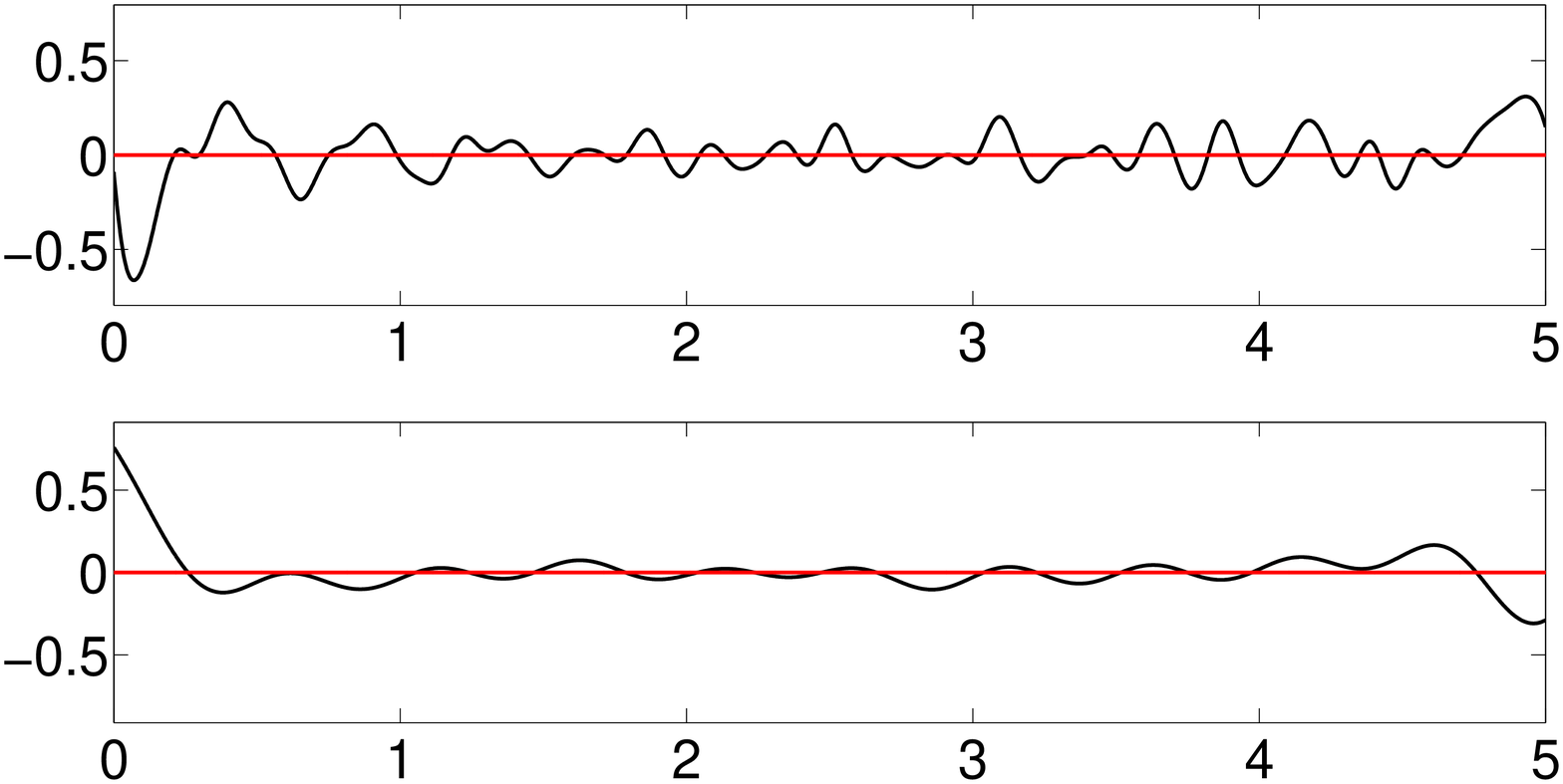}
                \caption{}
                \label{fig888-3}
        \end{subfigure}
\end{center}
        \caption{ (\subref{fig888-1})  The signal $f_2(x)$ given in (\ref{equ25}). (\subref{fig888-2}) The IF decomposition. (\subref{fig888-3})  Error values. }\label{fig888}
\end{figure}

Another important issue is that, given a signal, there might be more than one valid representation of it. For example, a purely harmonic function can be represented also as a time varying amplitude and time varying phase function. Therefore is very natural to ask which are all the possible different representations of a signal $f(x)$ that can be potentially obtained using the chosen decomposition method, and how much can they differ from each other. In the literature this is known as \emph{identifiability problem} \cite{genton2007statistical,daubechies2011synchrosqueezed,Wu2014Ident}. In \cite{Wu2014Ident} Chen at al. analyze the problem from the Synchrosqueezed Wavelet Transforms prospective, in particular they define a functional class $\mathcal A$ of single--component periodic functions having `slowly varying amplitude modulation and instantaneous frequency', as well as a class $\mathcal C$ of superpositions of single--component periodic functions in $\mathcal A$ whose instantaneous frequencies are different each other at every instant of time. For more details on these two classes we refer the reader to \cite{Wu2014Ident}. In that paper the authors prove that, given a signal in $\mathcal C$, which is a superposition of single--component `slowly varying' periodic functions, and assuming there is another representation of it which is also in $\mathcal C$, the difference of the two representations is bounded by a constant which depends only on the characteristics of the signal itself.

What happen when we consider the IF method? What we know from the results of the current example is that, given the signal $f_0(x)$, which belongs to the class $\mathcal C$, we can decompose it with high accuracy using the IF method, even if we perturb it with white noise.

In the next example we show that the IF method can decompose and identify with good accuracy also the components of a signal which does not belong to the class $\mathcal C$.

\noindent \textbf{Example 5} We consider the signal $f(x)$ which does not belong to the class $\mathcal C$, described in the previous example. $f(x)$ is defined as follows
\begin{eqnarray*}
  f_1(x) &=& (\sin(4\pi x)+1.5)\cos(50 \pi x) \\
  f_2(x) &=& \left(5\sin\left(2\pi\frac{x+1}{6}+\pi\right)+5.6\right)\sin(2\pi (10x+0.03\cos(40\pi x))) \\
  f_3(x) &=& (2 \cos(1.4\pi x)+5)\sin(4\pi x)
\end{eqnarray*}
\begin{equation}\label{eq:Ex9}
f(x)=f_1(x)+f_2(x)+f_3(x)
     ,\qquad  x\in[-1,\ 2]
\end{equation}
For this signal all the theorems proved by Chen et al. in \cite{Wu2014Ident} do not apply anymore.

Nevertheless, we can apply the IF method to $f(x)$ and produce a remarkably good decomposition, as shown in Figure \ref{figEx9-2}. The differences between the expected components and the one obtained via IF are plotted in Figure \ref{figEx9bis}.

\begin{figure}[H]
 \begin{center}
        \begin{subfigure}[b]{0.48\textwidth}
                \centering
                \includegraphics[width=\textwidth]{./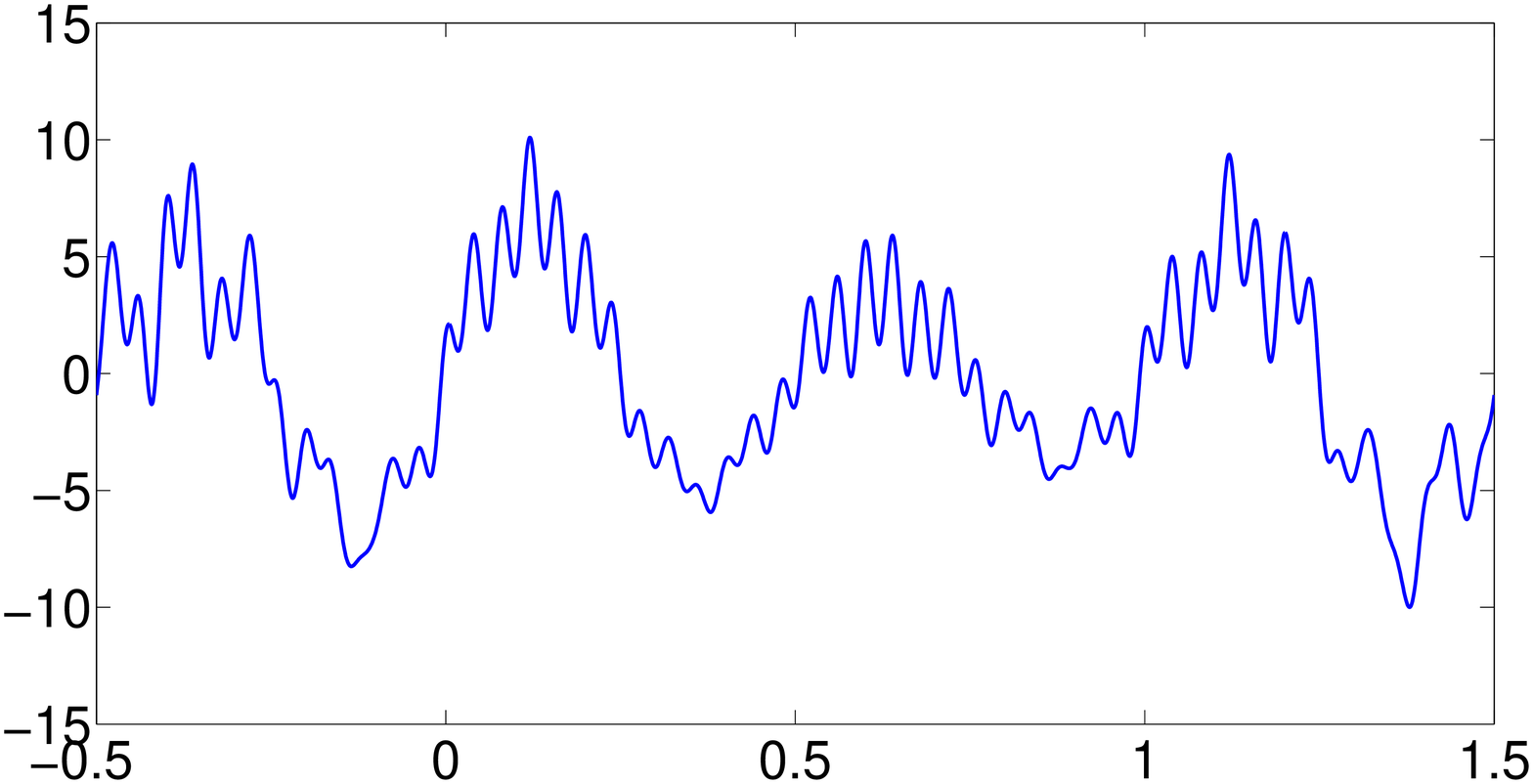}
                \caption{}
                \label{figEx9-1}
        \end{subfigure}%
        ~
        \begin{subfigure}[b]{0.48\textwidth}
                \centering
                \includegraphics[width=\textwidth]{./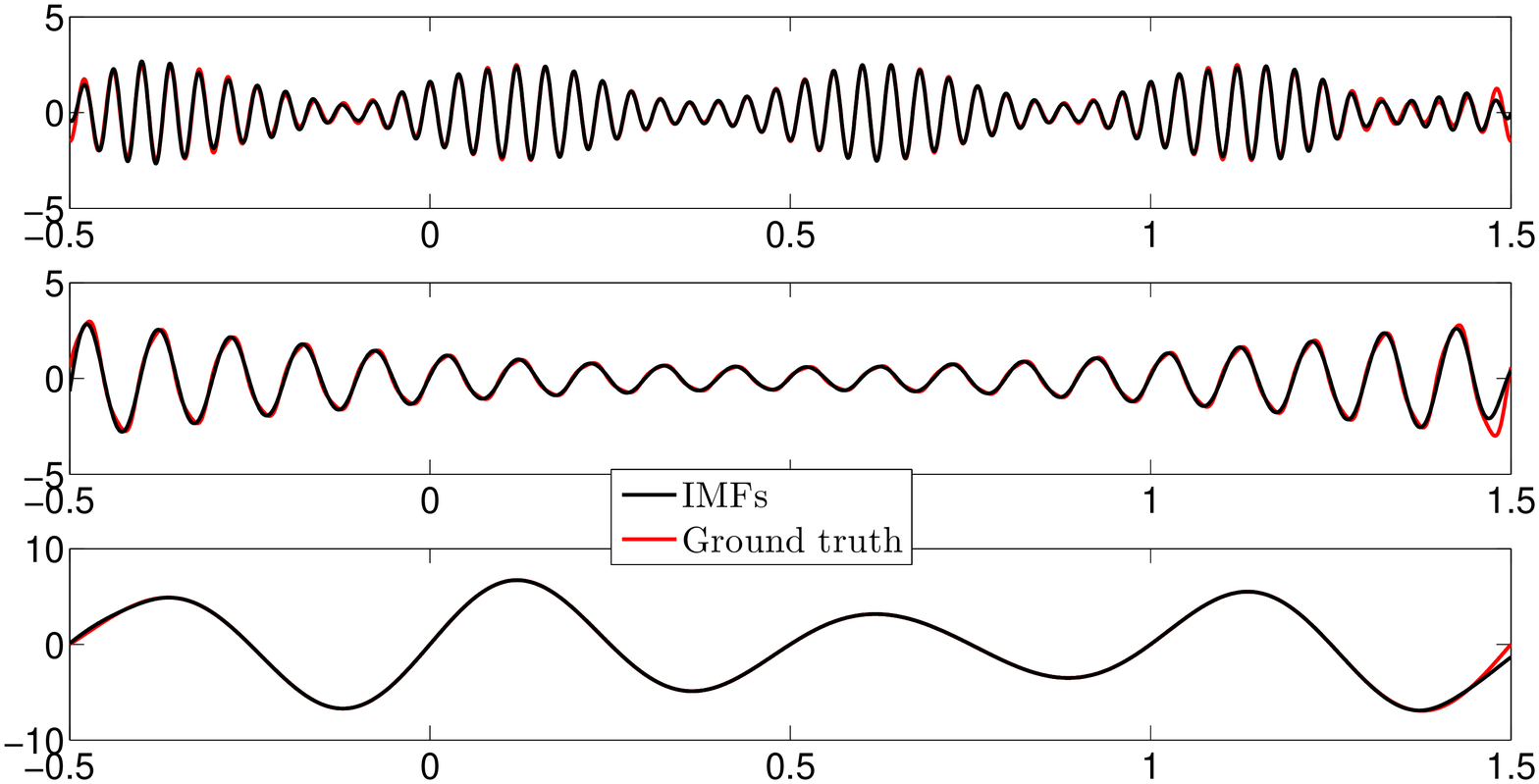}
                \caption{}
                \label{figEx9-2}
        \end{subfigure}
 \end{center}
        \caption{ (\subref{figEx9-1})  The signal given in (\ref{eq:Ex9}). (\subref{figEx9-2}) The IF decomposition.  }\label{figEx9}
\end{figure}
\begin{figure}[H]
 \begin{center}
        \begin{subfigure}[b]{0.48\textwidth}
                \centering
                \includegraphics[width=\textwidth]{./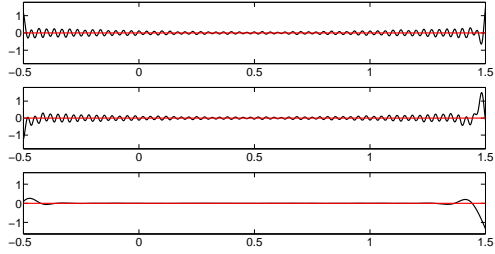}
        \end{subfigure}
\end{center}
        \caption{ The differences between the computed IMFs and the known ground truth.}\label{figEx9bis}
\end{figure}

From this example and the previous ones it is evident that the IF technique can identify functions in broader classes than the ones defined by Chen et al. in \cite{Wu2014Ident}. However it is not yet clear at this point what should be the properties of such classes.

Furthermore all the obtained results suggest that, in general, when we apply the IF method we can retrieve the components contained in a signal with good accuracy. It remains an open problem to derive bounds on the expected accuracy in the decomposition of a given signal. We plan to tackle all these problems in the next future.

\noindent \textbf{Example 6} This time we analyze an artificial signal and its perturbation with white noise to show the stability of the ALIF method. We consider
\begin{equation}\label{eq:ALIFnoise_no}
f_0(x)=\cos\left[20\cos\left(\frac{x}{10}\right)-4x\right]+\cos\left[20\cos\left(\frac{x}{10}\right)-7x\right]+1,\phantom{+\alpha_i n(x)}\qquad  x\in[0,20\pi]
\end{equation}
plotted in Figure \ref{fig40a}, which belongs to the functional class $\mathcal C$, it is a superposition of single--component periodic functions having slowly varying amplitude modulation and instantaneous frequency and whose instantaneous frequencies are different each other at every instant of time, ref. \cite{Wu2014Ident}. Furthermore we consider
\begin{equation}\label{eq:ALIFnoise}
f_i(x)=\cos\left[20\cos\left(\frac{x}{10}\right)-4x\right]+\cos\left[20\cos\left(\frac{x}{10}\right)-7x\right]+1+\alpha_i n(x), \qquad  x\in[0,20\pi], \; i=1,\,2,
\end{equation}
where $n(x)$ is white noise and $\alpha_i$, $i=1,\,2$, are constants so that the corresponding Signal to Noise Ratio, computed as $\textrm{SNR}=20\log(\|\textrm{signal}\|_2/\|\textrm{noise}\|_2)$, is around 0 and $-10$ respectively.

The IF algorithm can decompose the signal $f_0(x)$, but fails to reproduce the two basic components $g_1(x)=\cos\left[20\cos\left(\frac{x}{10}\right)-4x\right]$ and $g_2(x)=\cos\left[20\cos\left(\frac{x}{10}\right)-7x\right]$ contained in it. As it was for Example 3, the reason is these two components have corresponding instantaneous frequencies with overlapping ranges as shown in Figure \ref{fig40c}.

\begin{figure}[H]
        \begin{subfigure}[b]{0.3\textwidth}
                \centering
                \includegraphics[width=\textwidth]{./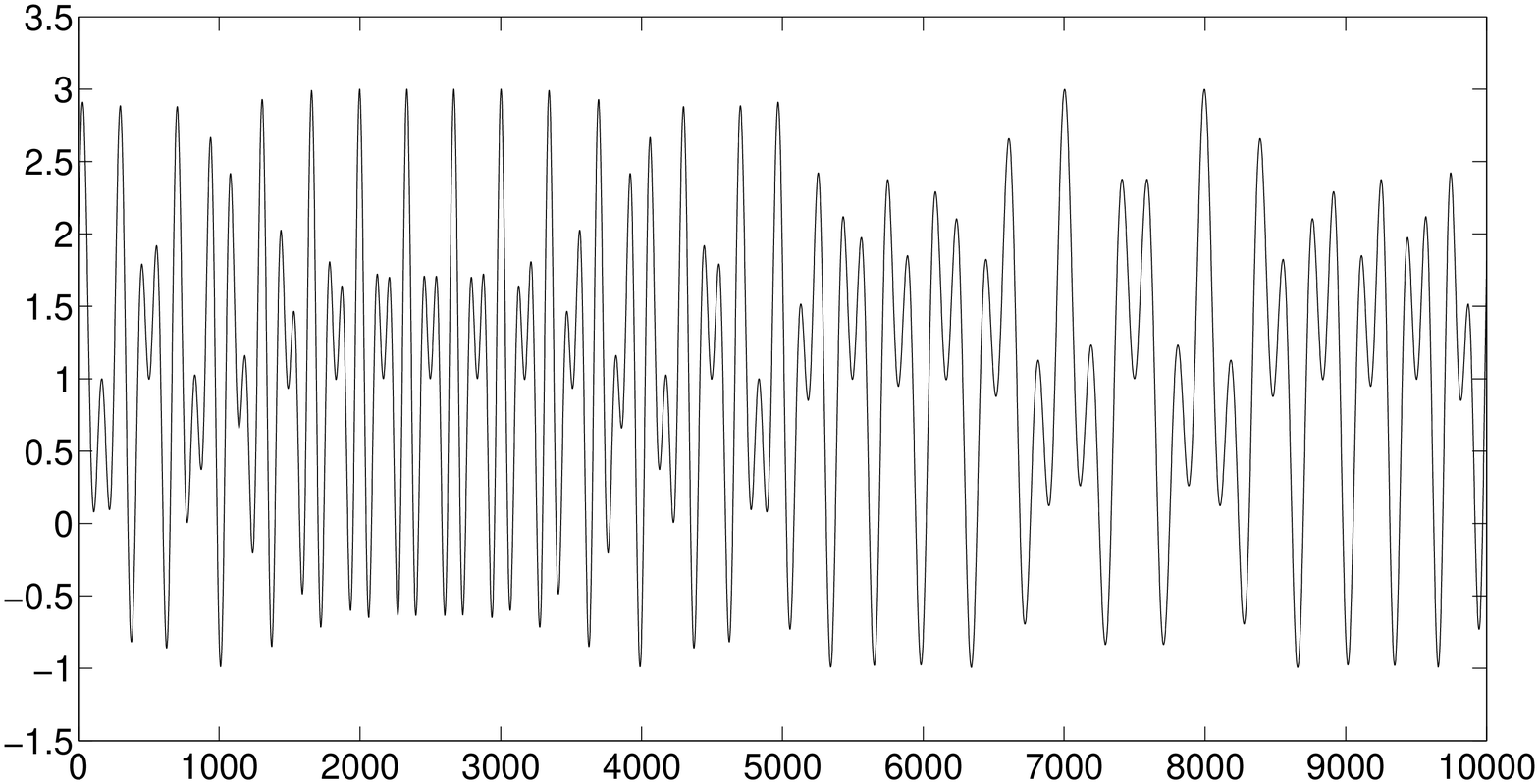}
                \caption{}
                \label{fig40a}
        \end{subfigure}
        ~ 
        \begin{subfigure}[b]{0.3\textwidth}
                \centering
                \includegraphics[width=\textwidth]{./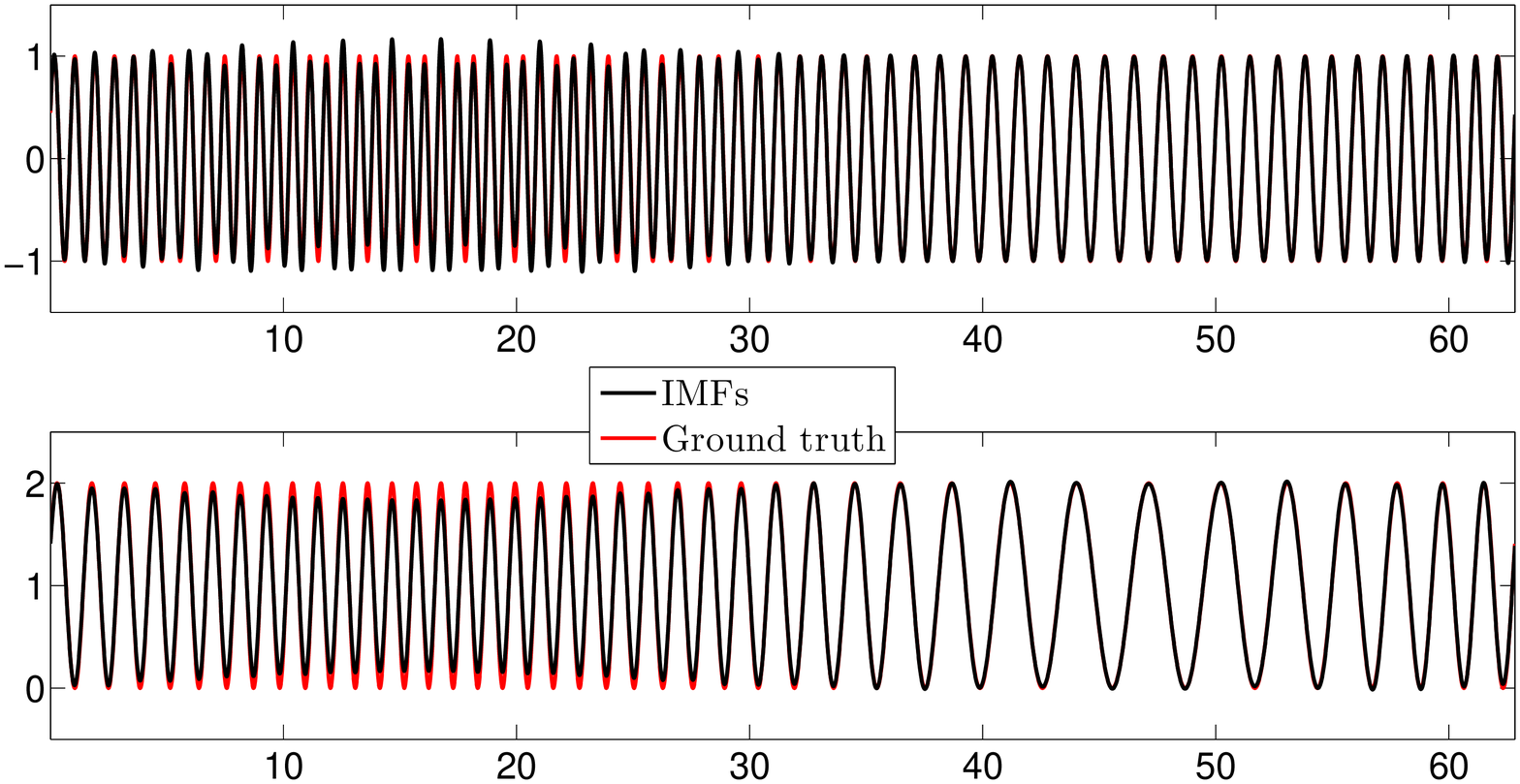}
                \caption{}
                \label{fig40b}
        \end{subfigure}
        ~ 
        \begin{subfigure}[b]{0.3\textwidth}
                \centering
                \includegraphics[width=\textwidth]{./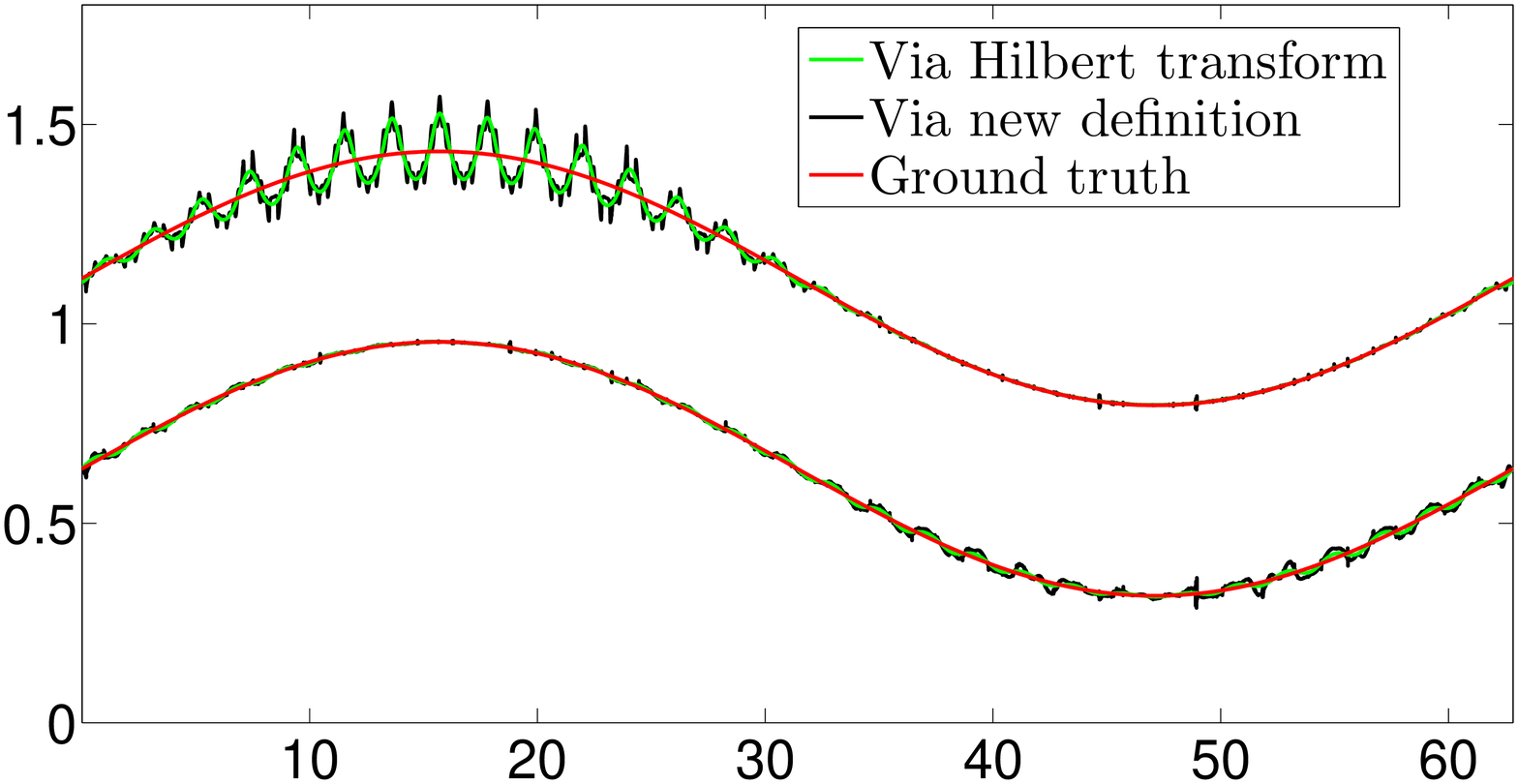}
                \caption{}\label{fig40c}
        \end{subfigure}%

         \caption{  (\subref{fig40a}) Signal $f_0(x)$ given in (\ref{eq:ALIFnoise_no}). (\subref{fig40b}) The decomposition produced by ALIF. (\subref{fig40c}) Instantaneous frequencies.}\label{fig40}
\end{figure}

To overcome the problem we make use of the ALIF code. The ALIF decomposition is plotted in \ref{fig40b}.

The method can handle high levels of noise as shown in Figure \ref{fig41} where SNR is around 0. As plotted in Figure \ref{fig41b} the first components in the decomposition contain purely noise, while the last two contain mainly the signal we want to recover.

\begin{figure}[H]
 \begin{center}
        \begin{subfigure}[b]{0.5\textwidth}
                \centering
                \includegraphics[width=\textwidth]{./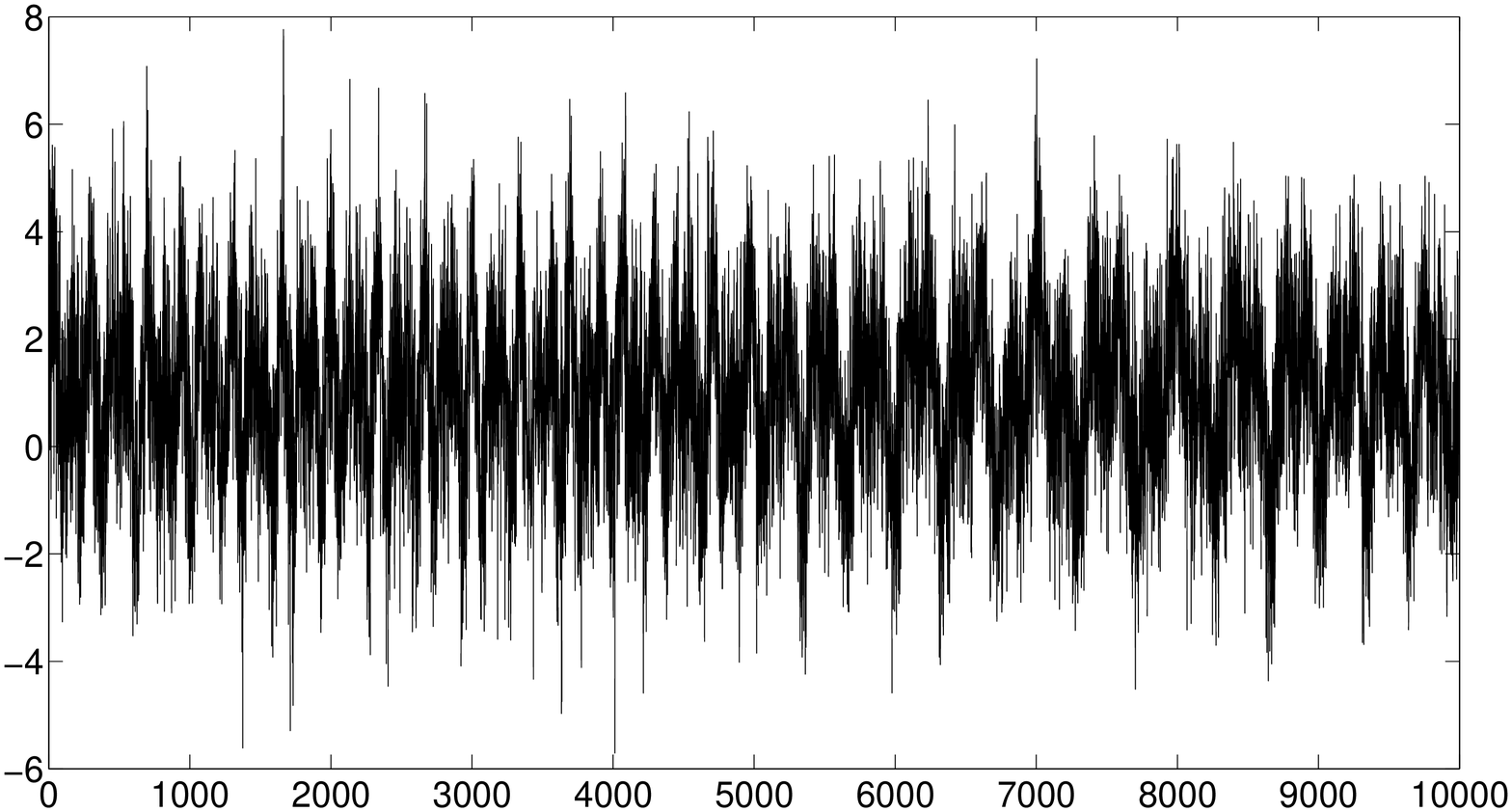}
                \caption{}
                \label{fig41a}
        \end{subfigure}%
        ~
        \begin{subfigure}[b]{0.5\textwidth}
                \centering
                \includegraphics[width=\textwidth]{./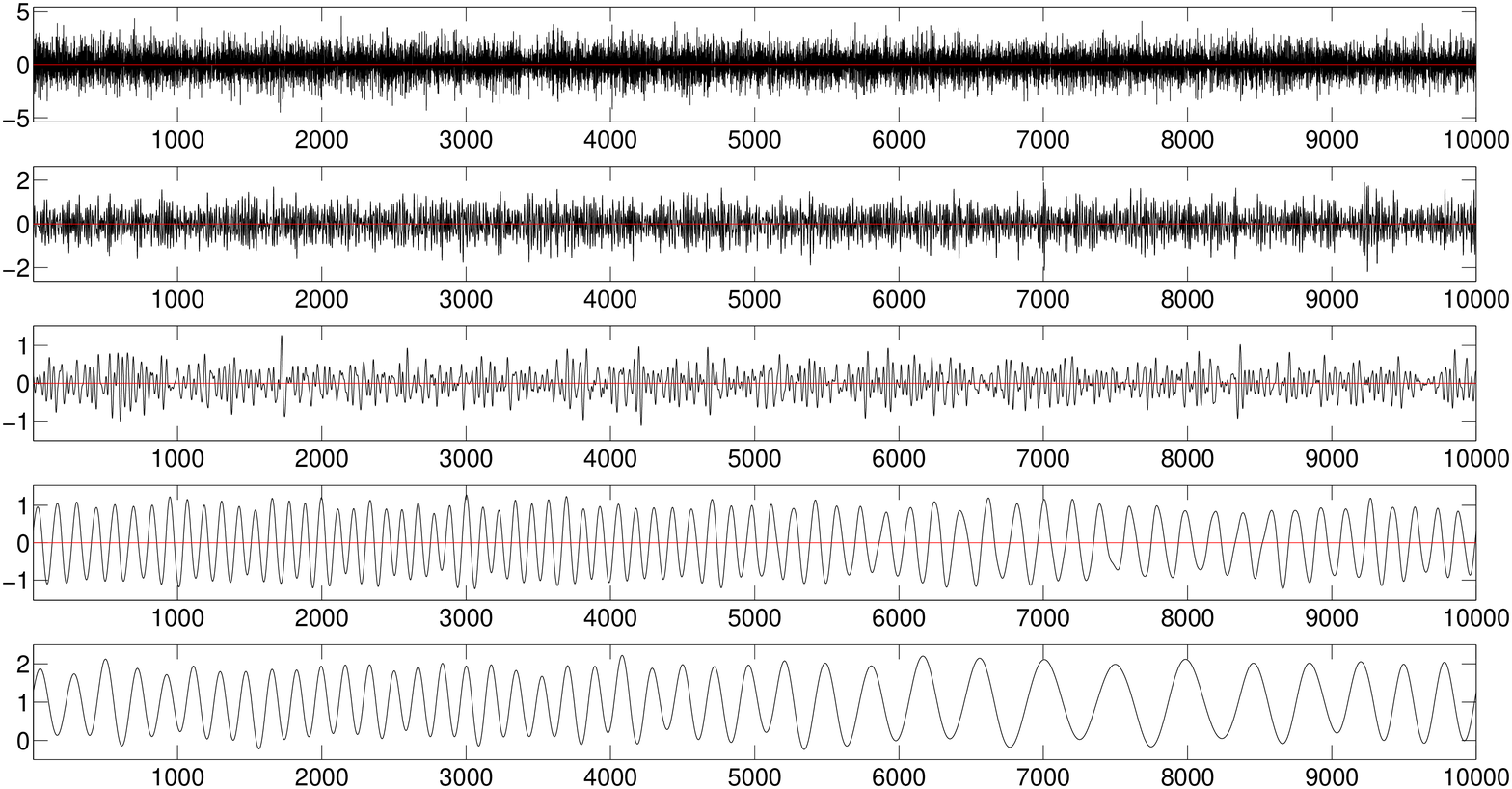}
                \caption{}
                \label{fig41b}
        \end{subfigure}
\end{center}
        \caption{ (\subref{fig41a}) Signal $f_1(x)$ with SNR around 0. (\subref{fig41b}) The decomposition produced by ALIF.  }\label{fig41}
\end{figure}

For higher levels of noise the method can still produce a meaningful decomposition. However the low frequency IMFs components start being corrupted by noise, ref. Figure \ref{fig42}. We note that also in this example ALIF does not use any a priori knowledge on the components embedded in the given signal.

\begin{figure}[H]
 \begin{center}
        \begin{subfigure}[b]{0.5\textwidth}
                \centering
                \includegraphics[width=\textwidth]{./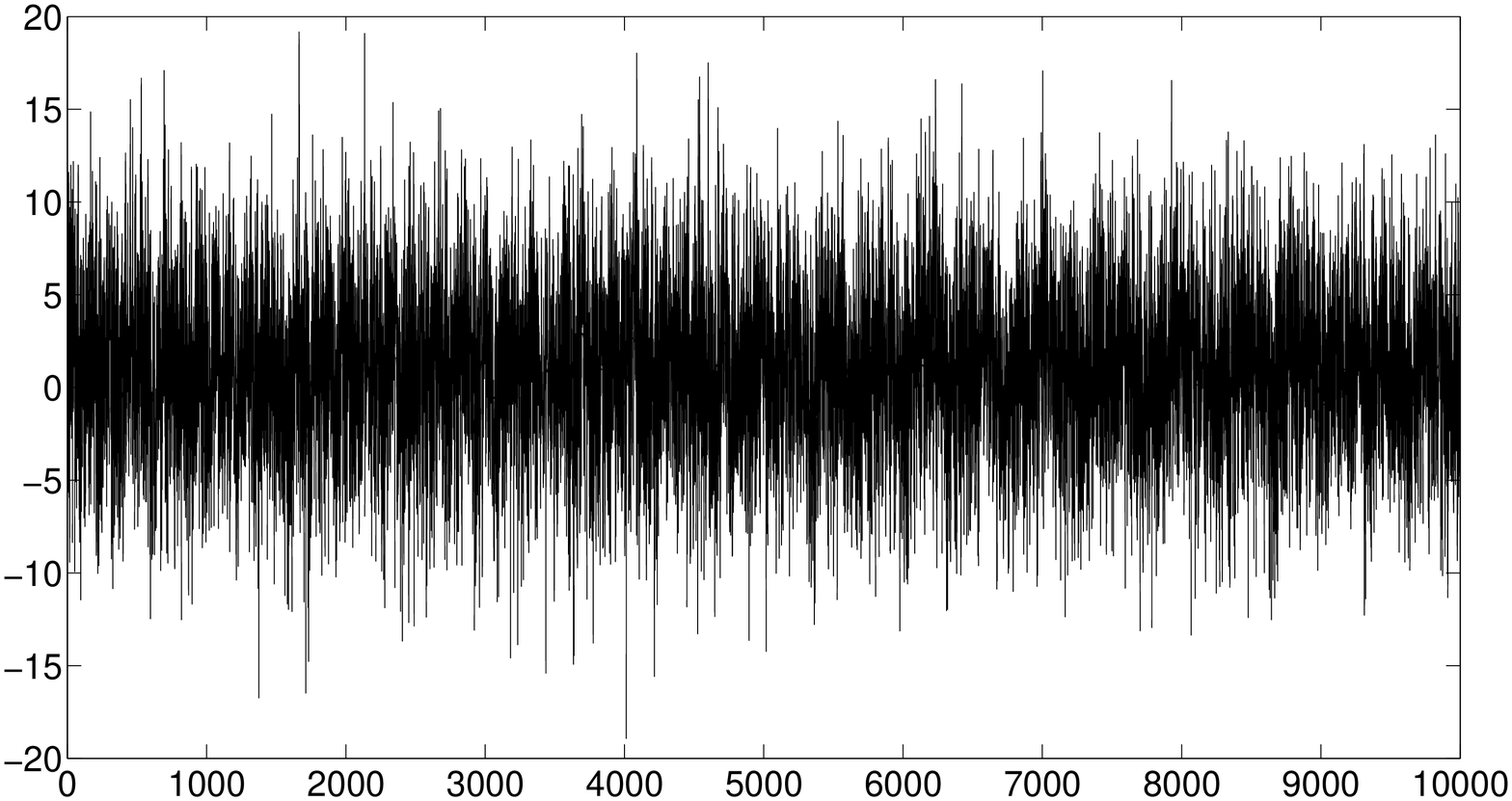}
                \caption{}
                \label{fig42a}
        \end{subfigure}%
        ~
        \begin{subfigure}[b]{0.5\textwidth}
                \centering
                \includegraphics[width=\textwidth]{./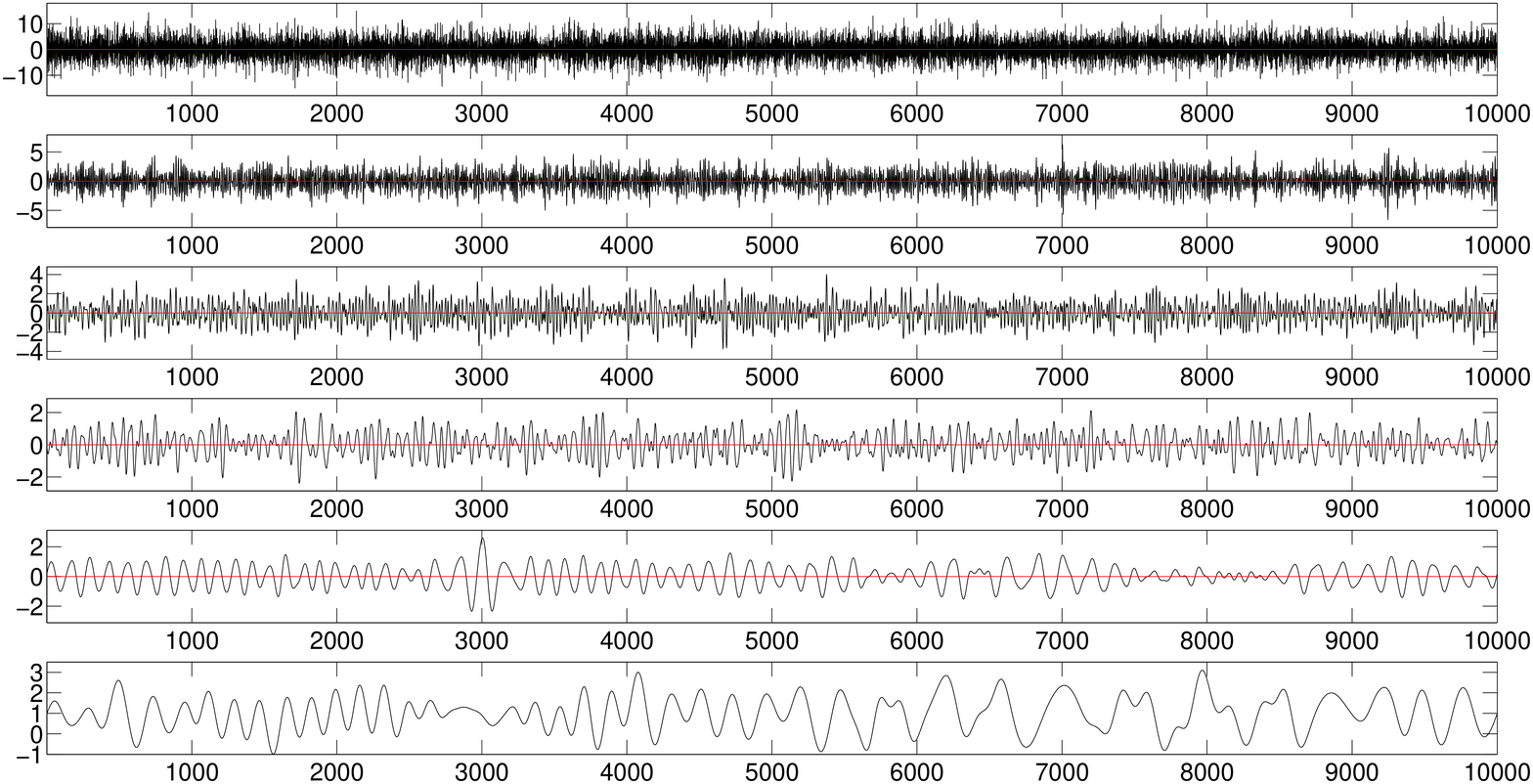}
                \caption{}
                \label{fig42b}
        \end{subfigure}
\end{center}
        \caption{ (\subref{fig42a}) Signal $f_2(x)$ with SNR around $-10$. (\subref{fig42b}) The decomposition produced by ALIF.  }\label{fig42}
\end{figure}

In Figures \ref{fig43} and \ref{fig44} we compare the last two components in the ALIF decompositions of $f_1$ and $f_2$ with the signals $g_1$ and $g_2+1$. These examples show the stability of this technique to white noise.

\begin{figure}[H]
 \begin{center}
                \includegraphics[width=0.7\textwidth]{./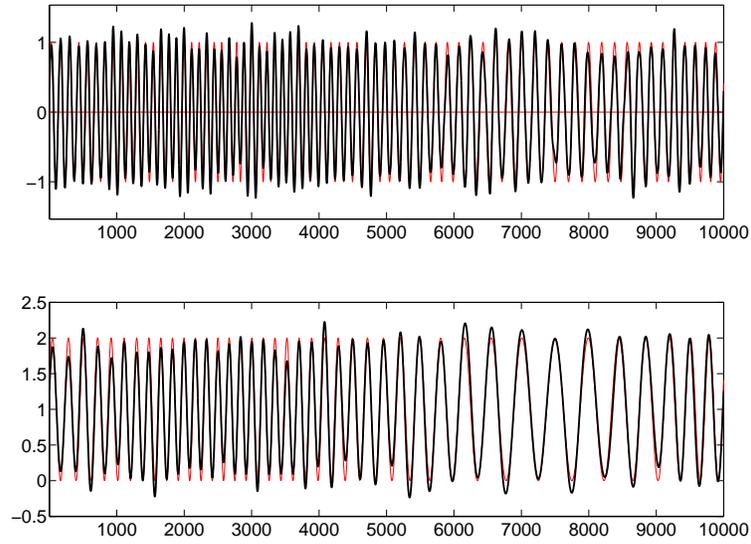}
                \caption{Comparison of the last two components in the ALIF decomposition of $f_1$, in solid black, with $g_1$ and $g_2+1$, in dotted red}\label{fig43}
 \end{center}
\end{figure}

\begin{figure}[H]
 \begin{center}
                \includegraphics[width=0.7\textwidth]{./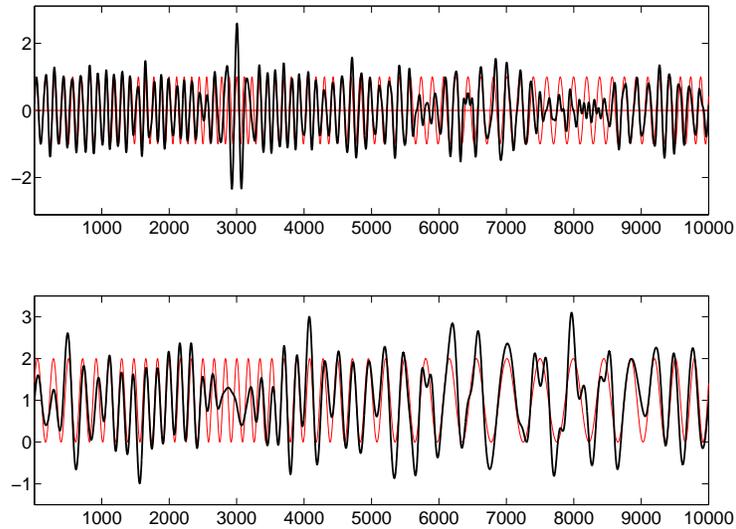}
                \caption{The last two components in the ALIF decomposition of $f_2$, in solid black, compared with $g_1$ and $g_2+1$, in dotted red}\label{fig44}
 \end{center}
\end{figure}

Furthermore these examples suggest that also the ALIF technique can identify with good accuracy signals belonging to the functional class $\mathcal C$, like signal $f_0(x)$, as well as signals that are clearly not in $\mathcal C$, like $f_1(x)$ with its SNR around 0 dB.

It remains an open problem to derive theoretical bounds on the expected accuracy of the ALIF decomposition of a given signal. We plan to study this problem in the next future.

\noindent \textbf{Example 7} We apply the IF algorithm to the deviation of the length of the day (LOD) data\footnote{LOD data set \url{http://hpiers.obspm.fr/eoppc/eop/eopc04/eopc04.62-now}} for $1000$ days from $01/01/1973$ to $09/28/1976$. This data is decomposed into $5$ components as shown in Figure \ref{fig10} where $4$ of them are IMFs and the last one is the trend. From the four IMFs, we can see very regular patterns: the half monthly change pattern, the monthly change pattern, the half yearly change pattern as well as the yearly change pattern.\\

 \begin{figure}[H]
        \begin{subfigure}[b]{0.5\textwidth}
                \centering
                \includegraphics[width=\textwidth]{./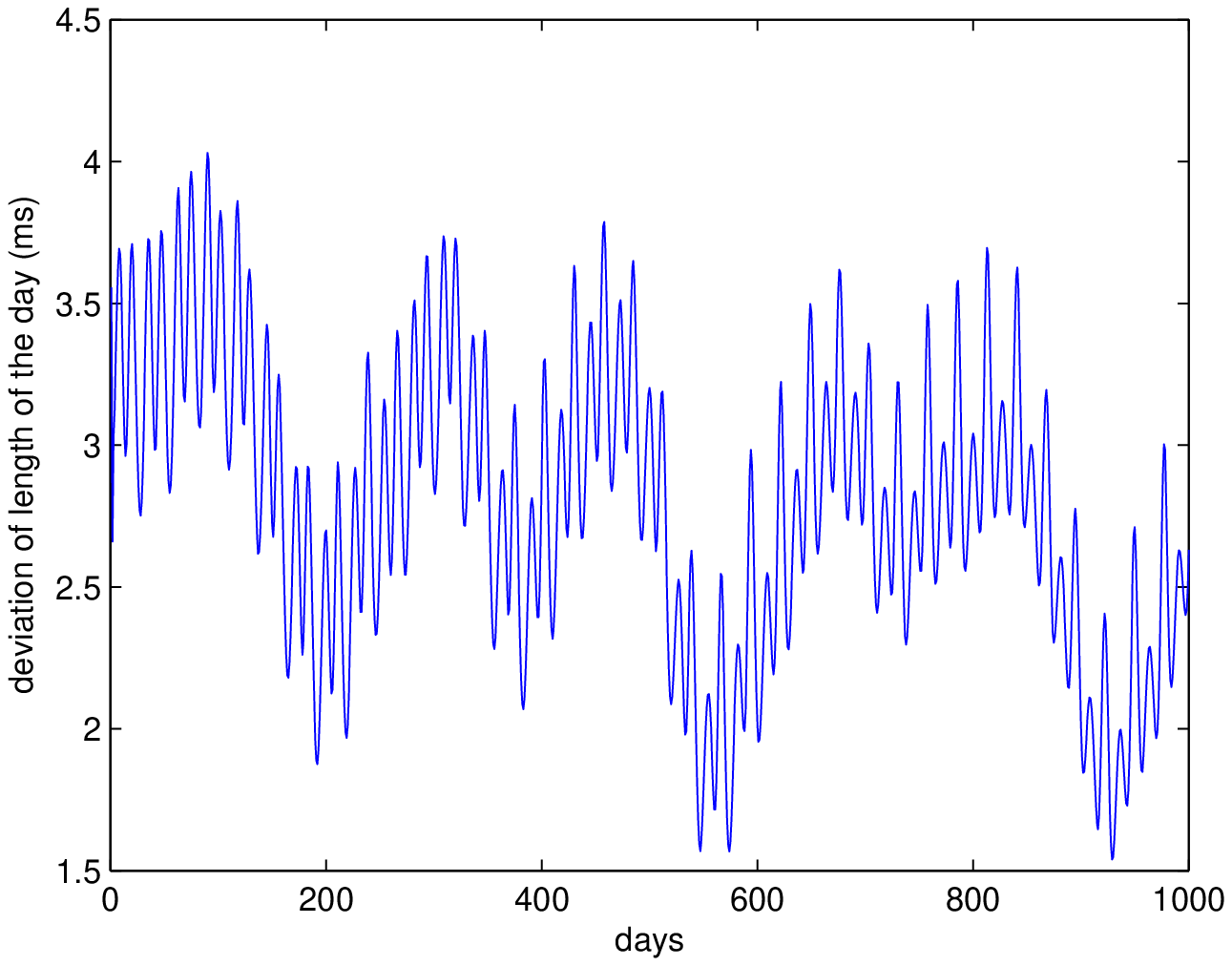}
                \caption{}
                \label{fig10-1}
        \end{subfigure}
        \begin{subfigure}[b]{0.5\textwidth}
        \includegraphics[width=\textwidth]{./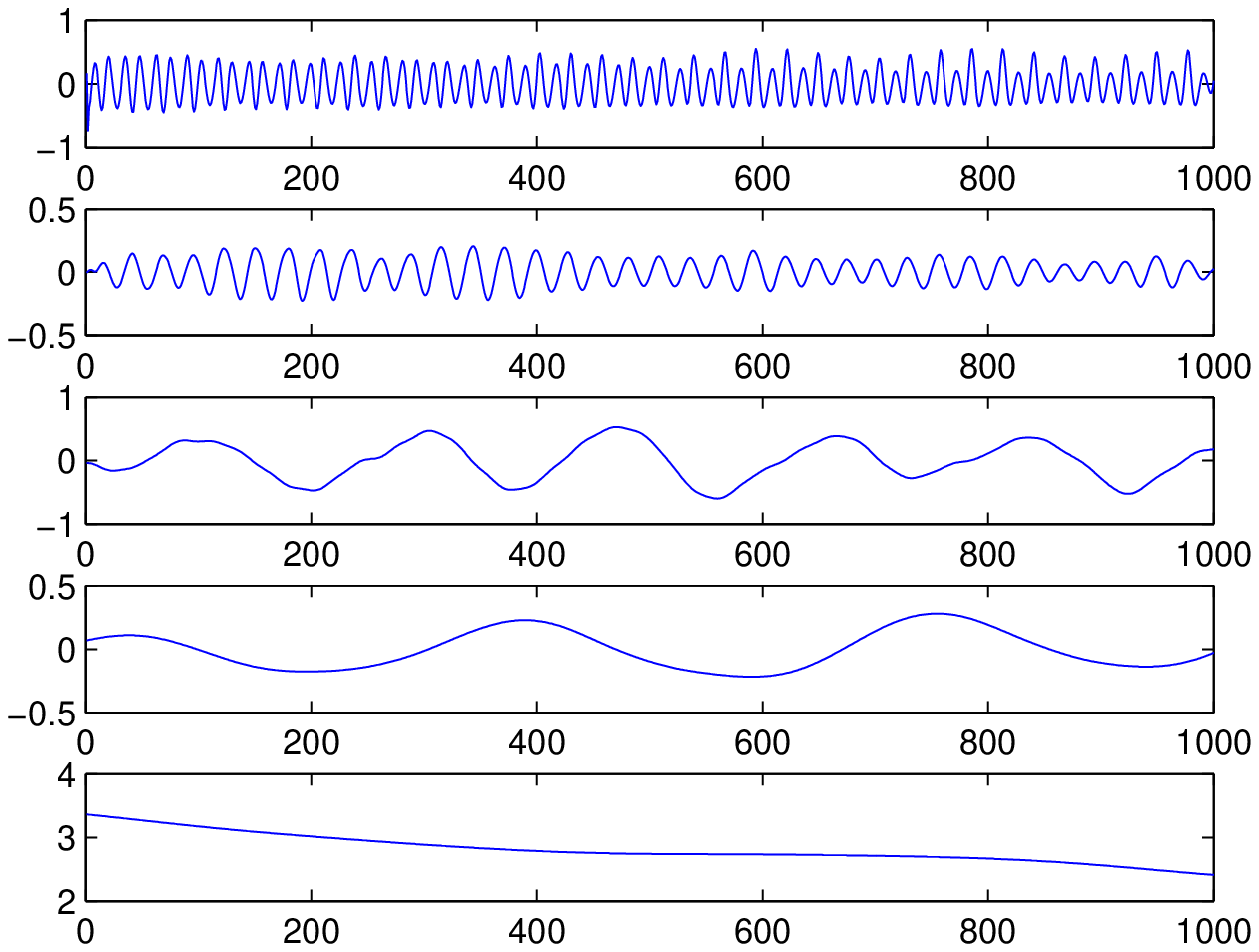}
        \caption{}
        \label{fig10-2}
        \end{subfigure}
        \caption{Length of day (LOD) signal and its decomposition. (\subref{fig10-1}) The LOD signal. (\subref{fig10-2}) The $5$ components in the IF decomposition.} \label{fig10}
\end{figure}

\noindent \textbf{Example 8} We test the IF algorithm on the water level data\footnote{Honshu earthquake and tsunami data set \url{
http://ntwc.arh.noaa.gov/previous.events/?p=03-11-11_Honshu}} recorded at Kawaihae, Hawaii, HI for 72 hours from March $11$, $2011$ to March $13$, $2011$ when the 2011 Honshu earthquake and tsunami occurred. The data is decomposed into several components as shown in Figure \ref{fig11} where the first two IMFs represent the transient signals associated with the impact of the tsunami. The last four components instead reveal the basic wave height with its regular patterns with periods of approximately 12, 24, 36 and 72 hours.

 \begin{figure}[H]
        \begin{subfigure}[b]{0.5\textwidth}
                \centering
                \includegraphics[width=\textwidth]{./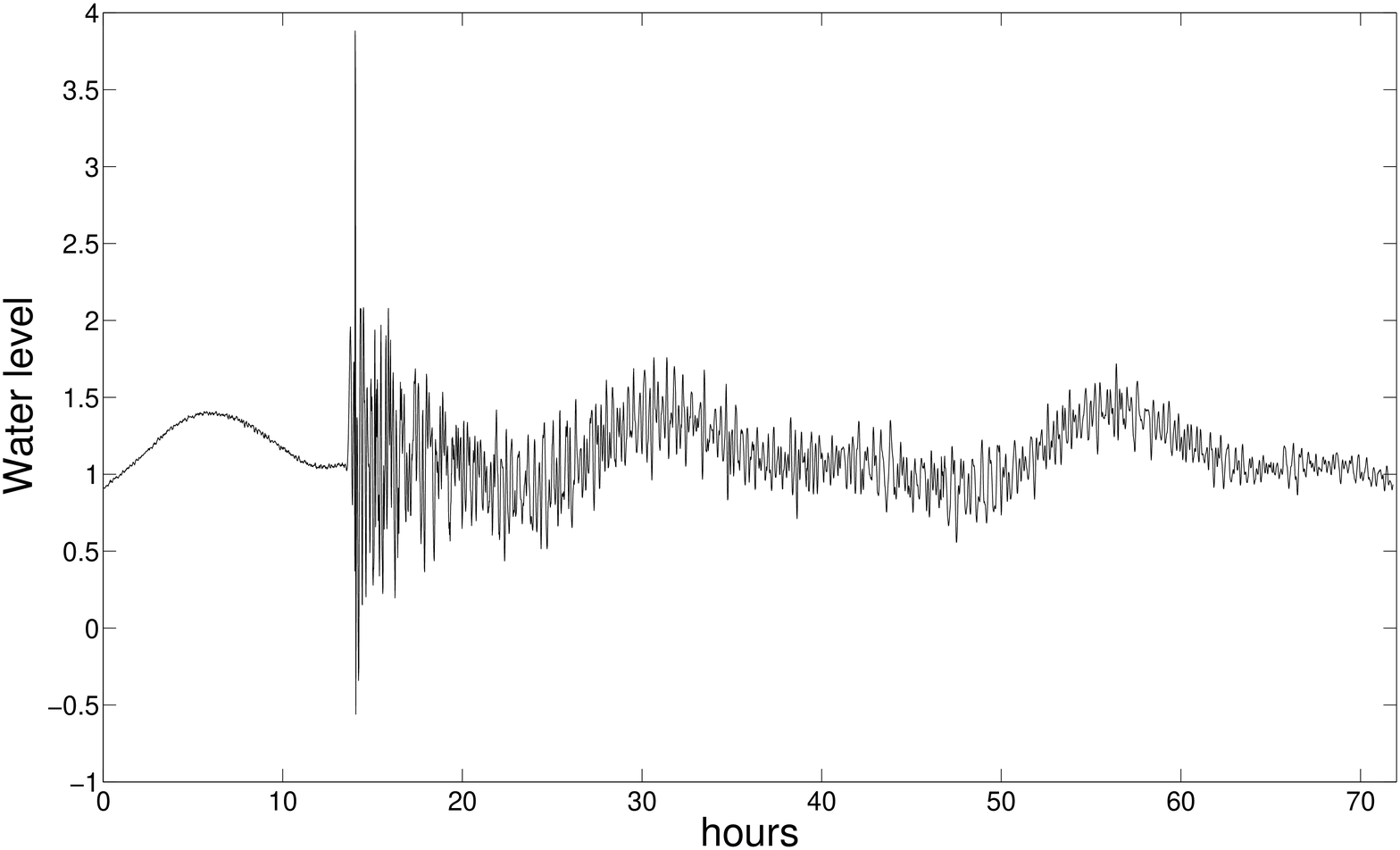}
                \caption{}
                \label{fig11-1}
        \end{subfigure}
        \begin{subfigure}[b]{0.5\textwidth}
        \includegraphics[width=\textwidth]{./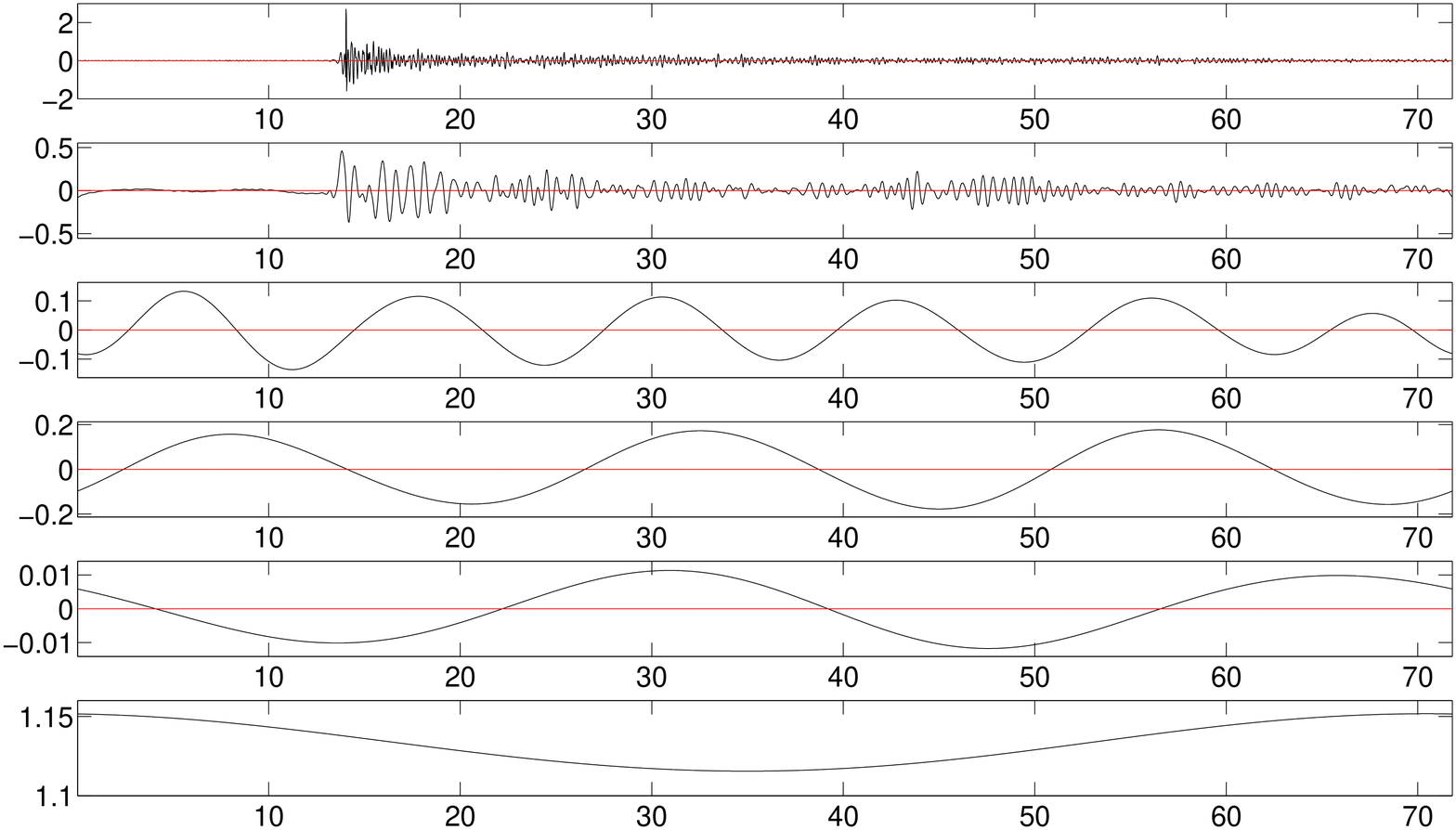}
        \caption{}
        \label{fig11-2}
        \end{subfigure}
        \caption{ (\subref{fig11-1}) The given wave height signal. (\subref{fig11-2}) The $6$ components in the IF decomposition. }\label{fig11}
\end{figure}

\noindent \textbf{Example 9}

We show the stability of the IF algorithm using this time real world signals. The two data sets shown
in Figure \ref{fig:Troposphere_Signal} are troposphere monthly mean temperature inferred from two research
groups from Jan 1979 to Dec 2004\footnote{Datasets available at \url{http://www.nsstc.uah.edu/data/msu/t2lt/uahncdc_lt_5.6.txt}
and \url{ftp://ftp.ssmi.com/msu/monthly_time_series/rss_monthly_msu_amsu_channel_tlt_anomalies_land_and_ocean_v03_3.txt}
}. We see that these two signals are quite close
each other, i.e. they have almost the same increasing and decreasing patterns except
the magnitudes are a slightly different. So we can regard one signal as a perturbation of
the other.

\begin{figure}[H]
                \centering
                \includegraphics[width=\textwidth]{./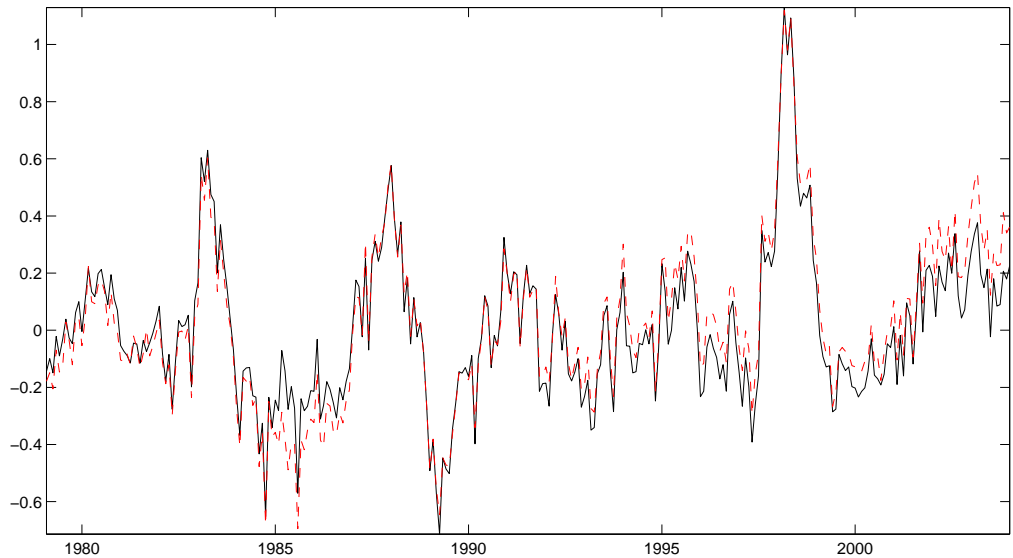}
        \caption{Troposphere monthly mean temperature inferred from two research groups
        from Jan 1979 to Dec 2004.}
        \label{fig:Troposphere_Signal}
        \end{figure}

        \begin{figure}[H]
        \includegraphics[width=\textwidth]{./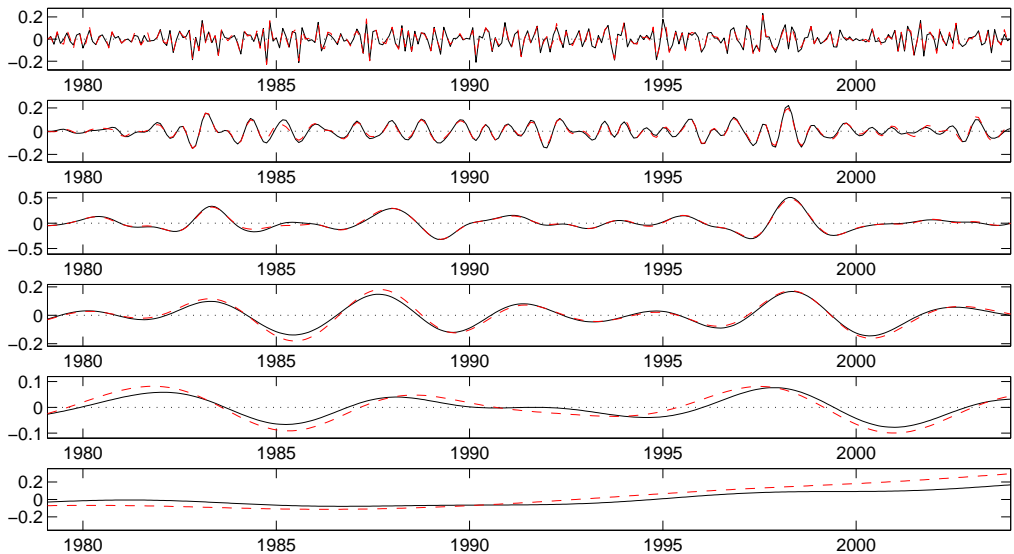}
        \caption{Decompositions of the two signals shown in Figure \ref{fig:Troposphere_Signal}. Corresponding components in the two decompositions are close each other.}
        \label{fig:Troposphere_IMFs}
\end{figure}

If we apply the IF algorithm to both signals we get
a decomposition for each of them. The results show that the IF technique
generates the same number of components for these two signals. We compare the
corresponding components in Figure \ref{fig:Troposphere_IMFs}. It is clear that each pair of corresponding
components are close to each other.

This particular example shows the stability of the IF method applied to real--life signals and, from the identifiability point of view \cite{Wu2014Ident}, suggests that also for real--life signals this technique is able to assign close--by decompositions to similar signals, even when the signals present fast accelerations and decelerations in the instantaneous frequency of its components, like the second IMF of the current example.

\section{Conclusion}

In this paper we first of all review the IF method, proving, in Theorem \ref{theo_1}, its inner loop convergence. In particular such theorem provides sufficient conditions on the filter shape which ensure the convergence of the IF method when applied to general non--stationary and non--periodical signals.

Secondly we propose the Adaptive Local Iterative Filtering (ALIF) algorithm with the purpose of designing a local, adaptive and stable iterative filtering method. The adaptivity of the algorithm is achieved by means of a filter length $l_n(x)$ who is changed pointwise following the behavior of the signal we want to decompose. The locality is guaranteed by the use of FP filters we designed based on a PDE model.

We prove the convergence of the ALIF method inner loop in Theorem \ref{theo_2}. Such theorem provides an a posteriori criterion for the convergence of the ALIF technique. It remains an open problem to find sufficient conditions on the filter and the non--uniform mask length $l_n(x)$ that ensure a priori the convergence of the inner loop of this method. We plan to work on this problem in the upcoming future.

All the numerical tests we ran suggest that under mild sufficient conditions on the filters the outer loops of both schemes converge. We plan to work on a rigorous proof of these convergences in the next future.

The stability of both IF and ALIF algorithm is shown by numerical examples in Section \ref{sec:Experiments}. We plan to study the stability properties of these techniques in the upcoming future.

Inspired by the numerical results we present in this paper we plan to analyze, from a theoretical standpoint, the identifiability problem of the decomposition of generic signals by means of the IF and ALIF methods.

In this work we present also new definitions of instantaneous phase and frequency which make use only of local properties of a given signal, allowing for a completely local time--frequency analysis.

Finally we point out that both IF and ALIF methods can be easily extended to handle higher dimensional signals as shown in \cite{Cicone2015MIF}. We plan to study the convergence of these extended techniques in the next future.

\section{Acknowledgments}

This work was supported by NSF Faculty Early Career Development (CAREER) Award DMS--0645266, DMS--1042998, DMS--1419027, and ONR Award N000141310408.

Antonio Cicone acknowledges support by National Group for Scientific Computation (GNCS -- INdAM) `Progetto giovani ricercatori 2014', and by Istituto Nazionale di Alta Matematica (INdAM) `INdAM Fellowships in Mathematics and/or Applications cofunded by Marie Curie Actions'. He wants also to thank Professor Patrick Flandrin for the enlightening discussions they had during his visit at the \'Ecole normale sup\'erieure de Lyon.

Furthermore the authors thank the anonymous referee for all the constructive and valuable comments that have contributed to the improvement of the present paper.

\small
\bibliography{ALIFpaper}
\bibliographystyle{plain}
\end{document}